\documentclass[11pt,a4paper,reqno]{amsart}
\usepackage[T1]{fontenc}
\usepackage{lmodern,amsmath,amssymb,amsthm,mathtools}
\usepackage[margin=28mm]{geometry}
\usepackage{microtype}
\usepackage[colorlinks=true,linkcolor=blue,citecolor=blue,urlcolor=blue]{hyperref}
\hypersetup{pdftitle={Liouville theorems for weighted potential systems with controlled homogeneity defects}}
\numberwithin{equation}{section}
\newtheorem{theorem}{Theorem}[section]
\newtheorem{proposition}[theorem]{Proposition}
\newtheorem{lemma}[theorem]{Lemma}
\newtheorem{corollary}[theorem]{Corollary}
\theoremstyle{definition}

\theoremstyle{remark}
\newtheorem{remark}[theorem]{Remark}

\makeatletter
\@namedef{subjclassname@2020}{\textup{2020} Mathematics Subject Classification}
\makeatother

\title[Weighted potential systems]{Liouville Theorems and Sharp Homogeneity Defects for Weighted Potential Systems}
\author{Shengbing Deng}
\address{School of Mathematics and Statistics, Southwest University, Chongqing 400715, People's Republic of China.}
\email{shbdeng@swu.edu.cn}

\author{Bo Zhang$^*$}
\address{School of Mathematics and Statistics, Southwest University, Chongqing 400715, People's Republic of China.}
\email{zhangbo371013@163.com}

\subjclass[2020]{35B53, 35J47, 45G15, 31B10}
\keywords{Weighted potential system, Liouville theorem, homogeneity defect, localized virial identity, source-layer pressure}
\begin{document}
\begin{abstract}
In this paper, we consider
\[
\begin{cases}
u=K*(|x|^\alpha a_0(x)v^p)&\quad\hbox{in }\mathbb R^n,\\
v=K*(|x|^\beta b_0(x)u^q)&\quad\hbox{in }\mathbb R^n,
\end{cases}
\]
where $n\ge2$, $0<\rho<n$, $p,q>0$, and $\alpha,\beta>-\rho$.
The radial kernel is nonincreasing and satisfies $K(r)\asymp r^{\rho-n}$.
The coefficients satisfy $0<c_0\le a_0(x),b_0(x)\le C_0$ and
$|x|(|\nabla\log a_0(x)|+|\nabla\log b_0(x)|)\le L$.
We prove nonexistence if
\[
\frac{n+\alpha-\varepsilon_a}{p+1}
+\frac{n+\beta-\varepsilon_b}{q+1}>n-\rho+\varepsilon_K,
\]
where the defects bound the radial logarithmic derivatives.
No symmetry, finite-energy condition, or prescribed decay is assumed. The proof combines a localized virial identity with a source-layer pressure estimate whose constants are uniform under centered dilations, including the low-order range $0<\rho\le1$. We also identify the necessary scaling range for existence. For radial weighted Newton systems with unequal powers, we prove that the combined coefficient defect has a sharp, unattained infimum equal to the subcritical gap. A fixed bound on the coefficient ratios forces a strictly larger defect, whereas suitable coefficient--solution pairs approach the threshold with excess at most $C/\log M$. In the scalar Newton case, the threshold is explicit and the approximating family converges, after normalization, to an explicit critical weighted profile. The sharpness results concern the Newton kernel; no corresponding construction is asserted for general fractional kernels.
\end{abstract}
\maketitle

\section{Introduction and main results}

In this paper, we consider the integral system
\begin{equation}\label{eq:system}
\begin{cases}
u(x)=\int_{\mathbb R^n}K(|x-y|)a(y)v(y)^p\,dy,\\
v(x)=\int_{\mathbb R^n}K(|x-y|)b(y)u(y)^q\,dy.
\end{cases}
\end{equation}
We seek positive $u,v\in C(\mathbb R^n)$ with finite potentials at every point. Write
\begin{equation*}
a(x)=|x|^{\alpha} a_0(x),\qquad b(x)=|x|^{\beta} b_0(x),\qquad x\ne0.
\end{equation*}
The value of a source at the origin has no effect on an integral.

For the Newton kernel in dimension $n\ge3$ and $a_0=b_0=1$, \eqref{eq:system} is the potential form of the H\'enon--Lane--Emden system. The subcritical region is
\begin{equation}\label{eq:classicalgap}
\frac{n+\alpha}{p+1}+\frac{n+\beta}{q+1}>n-2.
\end{equation}

The classical scalar theory of Gidas and Spruck~\cite{GS} identifies the importance of the critical exponent. For coupled Lane--Emden and H\'enon--Lane--Emden equations, the two nonlinear powers instead lead to a critical hyperbola. Rellich identities and solution estimates provide tools for studying this threshold~\cite{Mitidieri,Phan}; the unweighted conjecture in dimension four was proved by Souplet~\cite{Souplet2009}. In the radial weighted problem, the ground-state theory of Bidaut-V\'eron and Giacomini~\cite{BG} further describes the role of the critical hyperbola in existence. The exponent ranges in these results depend on the dimension, weights, and solution class.

The potential formulation makes the role of homogeneity particularly transparent. Chen and Li~\cite{ChenLi} studied integral systems related to the Lane--Emden conjecture, and fractional and higher-order extensions were developed in~\cite{CaoQin,Le,LiYuZhang}. For example, in the equal-order setting the integral theorem of Le~\cite{Le} covers
\[
0<p\le P_\alpha:=\frac{n+\rho+2\alpha}{n-\rho},\qquad
0<q\le P_\beta:=\frac{n+\rho+2\beta}{n-\rho},\qquad
(p,q)\ne(P_\alpha,P_\beta),
\]
for $0<\rho<n$ and $\alpha,\beta>-\rho$; the fractional theorem in~\cite{LiYuZhang} contains the same rectangle for $0<\rho<2$. This rectangle is only part of the weighted subcritical region. More recently, the preprint~\cite{OpenAI} proposed a proof for the subcritical classical H\'enon--Lane--Emden system. Mei and Wei~\cite{MeiWei} treated the homogeneous unweighted fractional system, including the low-order range, and Li, Lu and Zhang~\cite{LiLuZhang2026} treated the homogeneous weighted fractional problem and investigated critical nonexistence phenomena. These recent works are preprints.

The common structural feature of these problems is exact homogeneity of the potential kernel and the power weights. Once the kernel and the source coefficients are allowed to vary, comparability with a Riesz kernel and positive upper and lower coefficient bounds do not quantify the loss in the critical balance. Indeed, variable coefficients can support positive entire solutions in a subcritical range, as the existence theory discussed below and the example in Proposition~\ref{prop:counterexample} illustrate. The first question is therefore how much departure from homogeneity is compatible with Liouville nonexistence. Our nonexistence condition replaces exact homogeneity by one-sided differential bounds. These bounds measure the losses in the critical hyperbola directly; they do not require the coefficients or the kernel to approach homogeneous limits at zero or at infinity. The second asks how small that departure can be among coefficients that admit a positive entire solution, whether the resulting threshold is attained, and how a bound on the coefficient ratios changes it. We study both questions through the competition between the subcritical gap and the homogeneity defects.

\subsection*{Liouville theorems under controlled homogeneity defects}
Fix
\begin{equation}\label{eq:parameters}
n\ge2,\quad 0<\rho<n,\quad p,q>0,\quad \alpha,\beta>-\rho.
\end{equation}
Let $c_0,C_0,L$ be fixed positive constants. Let $\varepsilon_K,\varepsilon_a,\varepsilon_b\ge0$. We impose the following assumptions:
\begin{align}
&K\in C^1((0,\infty)),\quad
c_0r^{\rho-n}\le K(r)\le C_0r^{\rho-n},\notag\\
&0\le k(r):=-\frac{rK'(r)}{K(r)}\le n-\rho+\varepsilon_K=:k_*;
\label{eq:kernel}\\[2mm]
&a_0(x),b_0(x)\in C^1(\mathbb R^n\setminus\{0\}),\quad
c_0\le a_0(x),b_0(x)\le C_0,\notag\\
&|x|\bigl(|\nabla\log a_0(x)|+|\nabla\log b_0(x)|\bigr)\le L;
\label{eq:coeff}\\[2mm]
&x\cdot\nabla\log a_0(x)\ge-\varepsilon_a,\qquad
x\cdot\nabla\log b_0(x)\ge-\varepsilon_b.
\label{eq:radialdefect}
\end{align}
We call the parameters and bounds in these assumptions structural data. They are fixed throughout the argument.

\begin{theorem}\label{thm:main}
Assume \eqref{eq:parameters}--\eqref{eq:radialdefect}. Set
\begin{equation}\label{eq:effectivegap}
\Gamma=\frac{n+\alpha-\varepsilon_a}{p+1}
+\frac{n+\beta-\varepsilon_b}{q+1}-(n-\rho+\varepsilon_K).
\end{equation}
If $\Gamma>0$, then \eqref{eq:system} has no positive solution in
$C(\mathbb R^n)\times C(\mathbb R^n)$ with finite potentials.
\end{theorem}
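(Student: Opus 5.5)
Argue by contradiction. Suppose $(u,v)$ is a positive continuous solution of \eqref{eq:system} with finite potentials. Pair the two equations with $x\cdot\nabla$ of the sources; this gives a Pohozaev/virial-type identity localized by a centered cutoff $\varphi_R(x)=\varphi(x/R)$. Set $F=a v^{p+1}/(p+1)$ and $G=b u^{q+1}/(q+1)$. Computing $x\cdot\nabla$ of $a v^{p+1}$ and integrating by parts against the cutoff gives
\[
\int (n+\alpha+x\cdot\nabla\log a_0)\,\frac{a v^{p+1}}{p+1}\,\varphi_R
\]
plus terms in which $v^p\,x\cdot\nabla v$ appears. The latter are rewritten using the second equation. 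Differentiating $v=K*(bu^q)$ in $x$ and symmetrizing the double integral $\iint a(x)v^p(x)\,b(y)u^q(y)\,(x-y)\cdot\nabla K(|x-y|)$ produces $-\iint k(|x-y|)K(|x-y|)\,a v^p(x)\,b u^q(y)$. By \eqref{eq:kernel} this is bounded by $k_*\int a v^p u$ from the appropriate side, up to cutoff commutator terms. The lower bounds \eqref{eq:radialdefect} replace $n+\alpha+x\cdot\nabla\log a_0$ by $n+\alpha-\varepsilon_a$, and similarly for $b$. The key quantity is $I_R=\int a v^{p+1}\varphi_R$. The duality relation $\int a v^{p+1}=\int a v^p u\cdot(\cdots)=\int b u^{q+1}$, via Fubini on the double integral $\iint K\,a v^p\,b u^q$, identifies both energies up to localization. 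The outcome is an inequality of the form
\[
\Gamma\, I_R\le \mathcal E_R,
\]
where $\mathcal E_R$ collects the errors coming from $\nabla\varphi_R$ and from truncating the nonlocal double integrals.

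The second step establishes a priori integral estimates that make $\mathcal E_R$ small relative to $I_R$. Positivity and $K\asymp r^{\rho-n}$ give the lower bounds $u(x)\gtrsim |x|^{\rho-n}$ and $v(x)\gtrsim |x|^{\rho-n}$ for $|x|\ge1$. Finiteness of the potentials at a point gives $\int a v^p(1+|y|)^{\rho-n}<\infty$ and $\int b u^q(1+|y|)^{\rho-n}<\infty$. Bootstrapping these through the system yields the scaling bound $\int_{B_R} a v^{p+1}\lesssim R^{\,n+\alpha-(p+1)\cdot(\text{decay})}$ along a sequence. The constants must be uniform under the dilation $x\mapsto Rx$; this is where \eqref{eq:coeff} (the bound $L$ on $|x||\nabla\log a_0|$) enters, since it keeps the rescaled coefficients in a compact class.

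The main obstacle is controlling the commutator and outer-region terms in $\mathcal E_R$. These involve pairs $(x,y)$ with $|x|\sim R$ and $y$ far away, or $y$ near the annulus. When $\rho\le1$, $\nabla K$ is not locally integrable in a way that makes the naive bound $|\varphi_R(x)-\varphi_R(y)|\,|\nabla K|\lesssim R^{-1}|x-y|^{\rho-n}$ harmless. For this I would use the source-layer pressure estimate. Split the annulus $R\le|x|\le2R$ into dyadic layers and choose a good radius $R_j$ (by averaging over $R\in[R_j,2R_j]$) where the layer mass $\int_{|x|\sim R}a v^{p+1}$ is at most $C\,I_{2R}/\log$-type fractions. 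Use the monotonicity of $K$ (radial nonincreasing) so that $K*(\cdot)$ restricted to a thin layer is dominated by its mass times $R^{\rho-n}$. This yields $\mathcal E_R\le \delta I_{CR}+o(1)$.

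Combining, suppose $I_R\to\infty$. The growth of $I_R$ along good radii is sub-polynomial against the gain $\Gamma>0$, which forces $\Gamma I_R\le \delta I_{CR}$ with $\delta$ arbitrarily small along a subsequence. This contradicts the polynomial upper bound from step two. Hence $I_\infty<\infty$, so $\mathcal E_R\to0$, giving $\Gamma\int a v^{p+1}\le 0$. This is impossible for positive $v$, which proves Theorem~\ref{thm:main}.
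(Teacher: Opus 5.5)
Your outline has a plausible shape: a localized virial identity, errors of the form $\delta\cdot$energy$\,+\,o(1)$, then a contradiction. But the step that carries the whole theorem is missing.

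\textbf{The pressure term is not small.} The cutoff produces a boundary pressure. In the paper's notation this is $\sigma W_f+\tau W_g$, with $W_f=\int w\,fu$ and $w=-x\cdot\nabla\eta\ge0$. It is of the size of the annular energy $\int_{R<|x|<2R}(fu+gv)$ with an $O(1)$ constant. Averaging over radii in $[R_j,2R_j]$ only reproduces the annular energy. That energy is a small fraction of $I_{CR}$ only if you already know that $I_R$ grows slower than every power. If $I_R\sim R^{\gamma}$ with $\gamma$ large, the single-scale inequality $\Gamma I_R\le C(I_{2R}-I_R)$ is perfectly consistent. So your doubling step contradicts nothing unless the upper bound from step two is already of critical order. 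This is exactly the classical difficulty for Lane--Emden-type systems.

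The mechanism you invoke does not help either. At points near a thin layer, the potential of that layer is not dominated by its mass times $R^{\rho-n}$, and points near the layer are where the pressure lives.

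\textbf{Step two is not available as stated.} A bound $\int_{B_R}(fu+gv)\lesssim R^{n-\rho-\mu-\nu}$ with the natural exponents would end the proof at once, since $\Gamma>0$ forces $\mu+\nu+\rho-n>0$. That bound is the output of the argument. It cannot be obtained by bootstrapping positivity and finiteness of the potentials.

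Your worry about $\rho\le1$ is misplaced. The bound $|\varphi_R(x)-\varphi_R(y)|\,|\nabla K(x-y)|\lesssim R^{-1}|x-y|^{\rho-n}$ is locally integrable for every $\rho>0$. Symmetrizing before the diagonal truncation is removed suffices.

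\textbf{What the paper uses instead.} What is needed is a one-sided cancellation, not smallness. With $X=x\eta$, the interaction side of \eqref{eq:virial} is $\iint k\Phi\,d\pi$. Up to an error bounded by $\epsilon E+C_\epsilon$, $\Phi=\eta(x)-w(x)D(x,y)$, so the identity contains $-\iint k\,w\,D\,d\pi$.

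The source-layer pressure estimate \eqref{eq:pressure} bounds $\sigma W_f+\tau W_g$ by exactly $\iint k\,w\,D\,d\pi+\epsilon E+C_\epsilon$. The paper proves it in three steps:
\begin{itemize}
\item decompose $f,g$ into one-dimensional superlevel components on lines whose directions are concentrated near $\omega_x$;
\item apply the interval comparison of Lemma~\ref{lem:interval}, whose midpoint term has a sign because $k\ge0$;
\item recover the pressure from the layer identity $\int_0^{f}(u-(t/b)^{1/q})\,dt=\sigma fu$, with \eqref{eq:coeff} controlling the coefficient at the layer endpoints.
\end{itemize}

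The two $kwD$ terms then cancel, and only afterwards is $k\le k_*$ used. Replacing $k$ by $k_*$ early, as you propose, destroys this cancellation. The result is an inequality $c_1E_f+c_2E_g\le\epsilon E+C_\epsilon$ with $c_1+c_2=\Gamma$. Together with $|E_f-E_g|\le\theta E+C_\theta$ (your Fubini duality), this gives $E\le C$ uniformly over the structural class. Applied to the centered dilations, it yields $\int_{B_R}(fu+gv)\le CR^{n-\rho-\mu-\nu}\to0$, contradicting positivity. No doubling, growth bound, or finite-energy step is required.
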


The theorem requires neither symmetry nor finite energy. The one-sided defects can be chosen intrinsically as
\[
\begin{aligned}
\mathfrak d_K&=\sup_{r>0}[k(r)-(n-\rho)]_+,\\
\mathfrak d_a&=\sup_{x\ne0}[-x\cdot\nabla\log a_0(x)]_+,\qquad
\mathfrak d_b=\sup_{x\ne0}[-x\cdot\nabla\log b_0(x)]_+.
\end{aligned}
\]
Here $s_+=\max\{s,0\}$. These numbers are finite under \eqref{eq:kernel} and \eqref{eq:coeff}. Set
\[
\gamma_0=\frac{n+\alpha}{p+1}+\frac{n+\beta}{q+1}-(n-\rho).
\]
With $\varepsilon_K=\mathfrak d_K$, $\varepsilon_a=\mathfrak d_a$, and $\varepsilon_b=\mathfrak d_b$, the sufficient condition is
\[
\gamma_0>\mathfrak d_K+\frac{\mathfrak d_a}{p+1}+\frac{\mathfrak d_b}{q+1}.
\]
The constant $L$ in \eqref{eq:coeff} need not be small. Degree-zero coefficients satisfy $\mathfrak d_a=\mathfrak d_b=0$, even when they vary with the angular variable.

For \(K(r)=cr^{\rho-n}\), the kernel defect vanishes, \(\mathfrak d_K=0\). Moreover, degree-zero homogeneous coefficients satisfy \(\mathfrak d_a=\mathfrak d_b=0\). We therefore obtain the following corollary.

\begin{corollary}\label{cor:angular}
Let $K(r)=c r^{\rho-n}$, where $c>0$. Let $a_0$ and $b_0$ be positive $C^1$ functions on $\mathbb S^{n-1}$, extended with degree zero. Under \eqref{eq:parameters}, there is no positive entire solution of \eqref{eq:system} in
$C(\mathbb R^n)\times C(\mathbb R^n)$ with finite potentials if
\begin{equation}\label{eq:rieszgap}
\frac{n+\alpha}{p+1}+\frac{n+\beta}{q+1}>n-\rho.
\end{equation}
\end{corollary}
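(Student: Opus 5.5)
The plan is to deduce Corollary~\ref{cor:angular} directly from Theorem~\ref{thm:main}, applied with $\varepsilon_K=\varepsilon_a=\varepsilon_b=0$. Once \eqref{eq:kernel}--\eqref{eq:radialdefect} hold with these choices, the effective gap \eqref{eq:effectivegap} equals $\gamma_0$. The hypothesis \eqref{eq:rieszgap} says exactly that $\gamma_0>0$, and the theorem then gives nonexistence. So the work reduces to checking the structural assumptions for the Riesz kernel and for degree-zero coefficients.

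\textbf{Kernel.} Let $K(r)=cr^{\rho-n}$. Then $K\in C^1((0,\infty))$, and the two-sided bound in \eqref{eq:kernel} holds once $c_0\le c\le C_0$. The constants $c_0,C_0$ are ours to fix, so we may arrange this. Moreover $k(r)=-rK'(r)/K(r)=n-\rho$, which is nonnegative since $\rho<n$. Hence $0\le k\le n-\rho$, and \eqref{eq:kernel} holds with $\varepsilon_K=0$.

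\textbf{Coefficients.} Write $a_0(x)=\tilde a(x/|x|)$ with $\tilde a\in C^1(\mathbb S^{n-1})$ positive, and similarly for $b_0$. Since $\mathbb S^{n-1}$ is compact, $\tilde a$ and $\tilde b$ are bounded above and below by positive constants. Shrinking $c_0$ and enlarging $C_0$ if needed, we get $c_0\le a_0,b_0\le C_0$. Degree-zero homogeneity gives $a_0(tx)=a_0(x)$. Differentiating in $t$ at $t=1$ yields Euler's identity $x\cdot\nabla a_0(x)=0$. Therefore $x\cdot\nabla\log a_0=0$, and likewise for $b_0$. This gives \eqref{eq:radialdefect} with $\varepsilon_a=\varepsilon_b=0$.

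For the gradient bound in \eqref{eq:coeff}, differentiate $a_0(x)=\tilde a(x/|x|)$. This shows that $\nabla a_0$ is homogeneous of degree $-1$, so $|x|\,|\nabla a_0(x)|$ is a continuous degree-zero function. It is therefore bounded by its maximum over $\mathbb S^{n-1}$, where it is controlled by the tangential derivative of $\tilde a$. Dividing by $a_0\ge c_0$ bounds $|x|\,|\nabla\log a_0|$, and the same holds for $b_0$. Choosing $L$ as the sum of these two bounds gives \eqref{eq:coeff}.

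All the hypotheses of Theorem~\ref{thm:main} now hold, with $\Gamma=\gamma_0>0$, so the corollary follows. No step here is a real obstacle; all the analysis is in Theorem~\ref{thm:main}. The one point needing care is that the structural constants $c_0,C_0,L$ may be chosen after $c$, $a_0$ and $b_0$ are given. This is allowed because the theorem holds for any fixed positive constants.
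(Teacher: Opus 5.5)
Your proposal is correct and matches the paper's proof, which likewise checks that the Riesz kernel has $k\equiv n-\rho$ and that degree-zero $C^1$ coefficients satisfy \eqref{eq:coeff} with zero radial logarithmic derivative. It then applies Theorem~\ref{thm:main} with $\varepsilon_K=\varepsilon_a=\varepsilon_b=0$, so that $\Gamma=\gamma_0>0$; your remark that $c_0,C_0,L$ may be chosen after $c,a_0,b_0$ are given is a valid clarification that the paper leaves implicit.
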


For $a_0=b_0=1$, this is the homogeneous weighted problem. In the fractional range $0<\rho<2$, the weighted conclusion is contained in~\cite{LiLuZhang2026}; the unweighted case is treated in~\cite{MeiWei}. The Newton case $\rho=2$ is the subject of~\cite{OpenAI}. Related higher-order work includes~\cite{LP}. Corollary~\ref{cor:angular} allows positive degree-zero coefficients with nontrivial angular dependence. Theorem~\ref{thm:main} further allows a nonhomogeneous common kernel and nonradial coefficient factors with controlled logarithmic defects, for the full range $0<\rho<n$, without symmetry, finite energy, or prescribed decay. Remark~\ref{rem:newrange} compares its parameter range with the rectangle above.

The Newton potential representation also yields a differential consequence without any additional decay assumption.

\begin{corollary}\label{cor:pde}
Let $n\ge3$, $\alpha,\beta>-2$, and $p,q>0$. Suppose that \eqref{eq:coeff} and \eqref{eq:radialdefect} hold. If
\begin{equation*}
\frac{n+\alpha-\varepsilon_a}{p+1}
+\frac{n+\beta-\varepsilon_b}{q+1}>n-2,
\end{equation*}
then the system
\begin{equation}\label{eq:pde}
\begin{cases}
-\Delta u=|x|^{\alpha} a_0(x)v^p,\\
-\Delta v=|x|^{\beta} b_0(x)u^q
\end{cases}
\end{equation}
has no positive solution in
$\bigl(C^2(\mathbb R^n\setminus\{0\})\cap C(\mathbb R^n)\bigr)^2$.
\end{corollary}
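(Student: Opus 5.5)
The plan is to reduce Corollary~\ref{cor:pde} to Theorem~\ref{thm:main} with the Newton kernel $K(r)=c_n r^{2-n}$, $\rho=2$. For this kernel $k(r)\equiv n-2$, so \eqref{eq:kernel} holds with $\varepsilon_K=0$. The hypothesis of the corollary is then exactly $\Gamma>0$ in \eqref{eq:effectivegap}. The whole task is to show that a positive solution $(u,v)$ of \eqref{eq:pde} in $(C^2(\mathbb R^n\setminus\{0\})\cap C(\mathbb R^n))^2$ is a positive solution of \eqref{eq:system} with finite potentials. No decay assumption is available, so this representation step is the main work.

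\emph{Step 1 (removable origin).} Since $u,v$ are continuous at $0$ and $\alpha,\beta>-2>-n$, the right-hand sides of \eqref{eq:pde} are in $L^1_{\mathrm{loc}}$. The point singularity is removable: $u,v$ are distributional supersolutions on all of $\mathbb R^n$, with $-\Delta u=|x|^\alpha a_0v^p$ in $\mathcal D'(\mathbb R^n)$. To verify this, test against $\varphi\,\eta(|x|/\delta)$ with a cutoff $\eta$. The error terms are $O(\delta^{n-2})\sup|u|$ after one integration by parts onto $u$, using the continuity of $u$ at $0$, and these vanish as $\delta\to0$ because $n\ge3$.

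\emph{Step 2 (Riesz decomposition).} $u$ is positive and superharmonic on $\mathbb R^n$. I would apply the Riesz decomposition on balls $B_R$ and let $R\to\infty$. For every $x$,
\[
u(x)\ \ge\ \int_{B_R}G_R(x,y)\,|y|^\alpha a_0(y)v(y)^p\,dy ,
\]
where $G_R$ is the Dirichlet Green function of $B_R$. By monotone convergence, $G_R\uparrow c_n|x-y|^{2-n}$ gives $w:=K*(|y|^\alpha a_0v^p)\le u<\infty$, so the potential is finite everywhere. Then $h:=u-w\ge0$ satisfies $\Delta h=0$ in $\mathcal D'$. By Weyl's lemma $h$ is harmonic, and by the Liouville theorem for nonnegative harmonic functions $h\equiv C_1\ge0$. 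Likewise $v=K*(|y|^\beta b_0u^q)+C_2$.

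\emph{Step 3 (killing the constants).} This is the step I expect to be the main obstacle. Suppose $C_1>0$. Then $u\ge C_1$, so $v\ge K*(|y|^\beta b_0C_1^q)$. This integral diverges, since $\int^\infty r^{2-n+\beta+n-1}\,dr=\infty$ because $\beta>-2$. That contradicts the finiteness of $v$ obtained in Step 2, so $C_1=0$; symmetrically $C_2=0$, using $\alpha>-2$. I would double-check that this divergence argument needs no subcriticality, which it does not.

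Hence $(u,v)$ is a positive continuous solution of \eqref{eq:system} with finite potentials, with $a_0,b_0$ satisfying \eqref{eq:coeff}--\eqref{eq:radialdefect} and $\varepsilon_K=0$. Theorem~\ref{thm:main} then gives a contradiction. The only subtle points are the justification of the removable singularity in Step 1 and the monotone Green-function limit in Step 2. Both are standard for positive superharmonic functions, and I would handle them by citing the Riesz decomposition on balls.
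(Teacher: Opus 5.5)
Your proposal is correct and follows the paper's route. The paper proves exactly this representation in Lemma~\ref{lem:newton}: the distributional equation across the origin, then $K*g\le u$, then a nonnegative harmonic remainder that must be constant, with the constant killed by $\int^\infty r^{1+\beta}\,dr=\infty$. It uses mollification and spherical means where you use Dirichlet Green functions on balls, and then applies Theorem~\ref{thm:main} with $\rho=2$, $\varepsilon_K=0$.

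One small point: Weyl's lemma only gives $u=K*g+C_1$ almost everywhere. Theorem~\ref{thm:main} needs the identity at every point, so you need one more step. The paper proves continuity of the potentials, including at the origin; alternatively, $K*g$ is superharmonic, hence equals the limit of its ball averages, and these converge to $u(x)-C_1$ by continuity of $u$.
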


\subsection*{Sharpness of the defect threshold in the scalar Newton case}

The sufficient condition in Theorem~\ref{thm:main} asks that the subcritical gap dominate the combined defects. Its sharpness cannot be decided from coefficient bounds alone. Lei and Li~\cite{LeiLi} and Villavert~\cite{Villavert} obtained existence criteria for systems with bounded positive coefficients chosen together with the solution. In their integral formulations the coefficients are outside the potentials, whereas their differential formulations allow variable source coefficients. This distinction matters for the present source-weighted system. Proposition~\ref{prop:counterexample} recalls a scalar profile from~\cite{LeiLi} and computes its logarithmic defect.

To determine the least possible defect more precisely, we first use the radial scalar Newton equation $-\Delta U=cU^p$ as a model for the system optimization problem. It allows an explicit threshold and a direct phase-curve construction. Fix
\begin{equation}\label{eq:radialrange}
n\ge3,\qquad \frac n{n-2}<p<\frac{n+2}{n-2}.
\end{equation}
Let $\mathcal A_{n,p}$ be the set of pairs $(c,U)$ of positive radial functions such that
\[
\begin{gathered}
c\in C^1(\mathbb R^n),\qquad U\in C^2(\mathbb R^n),\qquad
-\Delta U=cU^p\quad\hbox{in }\mathbb R^n,\\
0<c_-\le c(r)\le c_+<\infty,\qquad
\sup_{r>0}\left|\frac{rc'(r)}{c(r)}\right|<\infty.
\end{gathered}
\]
The bounds $c_-,c_+$ may depend on $(c,U)$. We impose no decay or energy condition. Set
\begin{equation}\label{eq:radialinfimum}
D(c)=\sup_{r>0}\left[-\frac{rc'(r)}{c(r)}\right]_+,
\qquad
D_{\mathrm{crit}}(n,p)=\inf_{(c,U)\in\mathcal A_{n,p}}D(c).
\end{equation}

\begin{theorem}\label{thm:radialthreshold}
Under \eqref{eq:radialrange},
\begin{equation}\label{eq:radialsharp}
D_{\mathrm{crit}}(n,p)=d_*:=2-\frac{(n-2)(p-1)}2.
\end{equation}
The infimum is not attained. For every $\varepsilon>0$, there is a pair $(c,U)\in\mathcal A_{n,p}$ with $c,U\in C^\infty(\mathbb R^n)$ such that
\[
\begin{gathered}
c(0)=1,\qquad c'(r)\le0,\qquad c(\infty):=\lim_{r\to\infty}c(r)>0,\\
d_*<D(c)\le d_*+\varepsilon,\qquad
0<\lim_{r\to\infty}r^{n-2}U(r)<\infty.
\end{gathered}
\]
\end{theorem}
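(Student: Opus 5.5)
The identity is $D_{\mathrm{crit}}(n,p)=d_*$; I would prove it in three parts: the lower bound $D(c)\ge d_*$ from the Liouville theory, strictness $D(c)>d_*$ from a radial Pohozaev monotonicity argument, and the upper bound $\inf D(c)\le d_*$ from an explicit family.

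\emph{Lower bound.} Take the scalar case $u=v=U$ of Corollary~\ref{cor:pde} with $\alpha=\beta=0$, $a_0=b_0=c$, $p=q$ and $\varepsilon_a=\varepsilon_b=D(c)$. The sufficient condition becomes $2(n-D(c))/(p+1)>n-2$. This is exactly
\[
D(c)<\tfrac{n+2-(n-2)p}{2}=d_*,
\]
since $d_*=2-(n-2)(p-1)/2=(n+2-(n-2)p)/2$. Hence every $(c,U)\in\mathcal A_{n,p}$ satisfies $D(c)\ge d_*$. The hypotheses \eqref{eq:coeff} hold because $c$ is radial, bounded above and below, and has $\sup_r|rc'/c|<\infty$.

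\emph{Non-attainment.} Suppose $D(c)=d_*$, so that $rc'/c\ge -d_*$. I would use the radial Pohozaev function
\[
P(r)=r^n\Bigl(\tfrac12U'(r)^2+\tfrac{c(r)U(r)^{p+1}}{p+1}\Bigr)+\tfrac{n-2}{2}r^{n-1}U(r)U'(r).
\]
A direct computation with $(r^{n-1}U')'=-r^{n-1}cU^p$ gives
\[
P'(r)=r^{n-1}U^{p+1}\Bigl[\tfrac{nc+rc'}{p+1}-\tfrac{n-2}{2}c\Bigr]\ge r^{n-1}cU^{p+1}\Bigl[\tfrac{n-d_*}{p+1}-\tfrac{n-2}{2}\Bigr]=0.
\]
The last bracket vanishes because of the choice of $d_*$. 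Since $P(0)=0$ and $P$ is nondecreasing, it remains to show $P(r)\to0$ as $r\to\infty$.

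\emph{Decay of $U$.} No decay is assumed, so I would derive it.
\begin{itemize}
\item $U$ is radially decreasing, since $r^{n-1}U'=-\int_0^r s^{n-1}cU^p\,ds<0$.
\item Integrating $(r^{n-1}U')'=-r^{n-1}cU^p$ on $[0,r]$ and using monotonicity gives $-U'(r)\ge \tfrac{c_-}{n}\,rU(r)^p$.
\item Integrating $-U'(s)\ge\tfrac{c_-}{n}sU(s)^p\ge\tfrac{c_-}{n}sU(r)^p$ over $s\in[r/2,r]$, and using $U(r/2)\ge U(r/2)-U(r)$, gives $U(r/2)\ge C r^2U(r)^p$. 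Monotonicity then yields $U(r)\le C r^{-2/(p-1)}$.
\item From $|U'(r)|\le r^{1-n}\int_0^r s^{n-1}cU^p\,ds$ and this bound (the integrand exponent $n-1-2p/(p-1)>-1$ because $p>n/(n-2)$), I get $|U'(r)|\le Cr^{-2/(p-1)-1}$.
\end{itemize}
Each term of $P$ is then $O(r^{\,n-2-4/(p-1)})$. This tends to $0$ because $p<(n+2)/(n-2)$ means $4/(p-1)>n-2$. Therefore $P\equiv0$, so $P'\equiv0$. Since $cU^{p+1}>0$, this forces $rc'/c\equiv-d_*$, that is $c=c(1)r^{-d_*}$. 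This contradicts $c\ge c_->0$ and $d_*>0$. Hence $D(c)>d_*$ always, and the infimum is not attained.

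\emph{Upper bound (construction).} Given $\varepsilon>0$, pick $d\in(d_*,d_*+\varepsilon)$ and a large scale $M$. I would take a smooth nonincreasing $c$ with $c\equiv1$ near $0$ and $c\approx r^{-d}$ on $[1,M]$, where $rc'/c\ge-d$ throughout the transition zones. On $(M,\infty)$, $c$ levels off to a positive constant $c(\infty)$, so that $D(c)\le d$. The slope $d$ is taken as $d_*$ plus a margin of order $1/\log M$, which is the source of the $C/\log M$ excess.

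On the middle range the weight $r^{-d}$ makes $p$ supercritical for the Hénon equation $-\Delta U=r^{-d}U^p$, since $(n+2-2d)/(n-2)<p$. In Emden--Fowler variables $t=\log r$, $U=r^{-(2-d)/(p-1)}w(t)$, this is an autonomous system whose positive equilibrium is attracting. The solution with $U(0)=1$ therefore stays positive and approaches the slow-decay profile on $[1,M]$.

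\emph{Main obstacle.} The hard step is the passage through $r\sim M$ where $c$ becomes constant. There the Emden--Fowler system becomes the Lane--Emden one with $p$ subcritical, and I must show that the trajectory exits towards the fast-decay regime $U\sim r^{2-n}$, not towards a zero. I would try a phase-plane argument for the nonautonomous system with slowly varying coefficient $\tfrac{d}{dt}\log c$. The plan is to track the Pohozaev-type energy $P$ above, now growing at rate proportional to $(d-d_*)$, and to show that over the $\log M$ units of $t$ it accumulates enough to push the orbit into the region where the $r^{2-n}$ tail is stable; here $p>n/(n-2)$ makes the tail self-consistent. Finally, $r^{n-2}U\to\ell\in(0,\infty)$ follows from the Newton representation, because $cU^p\in L^1$. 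Smoothness of $(c,U)$ and the normalization $c(0)=1$ are then immediate.
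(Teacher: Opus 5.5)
Your lower bound and non-attainment steps are correct, and they are essentially the paper's Proposition~\ref{prop:radialendpoint}: the same Pohozaev function, the same derivative and the same decay exponents. Two small remarks:
\begin{itemize}
\item Corollary~\ref{cor:pde} is superfluous. If $D(c)\le d_*$, then $P'\ge0$, and $P'>0$ near $0$ because $rc'/c\to0$ there. This already contradicts $P(0)=P(\infty)=0$.
\item From $U(r/2)\ge Cr^2U(r)^p$, monotonicity only gives $U\le Cr^{-2/p}$. To get $r^{-2/(p-1)}$, integrate $\frac{d}{dr}U^{1-p}\ge(p-1)c_-r/n$ from $0$ to $r$.
\end{itemize}

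\textbf{The gap.} The genuine gap is in the construction, exactly at the step you call the main obstacle, and the mechanism you propose cannot work.

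Once $c$ is constant, the Emden--Fowler equation for $w=r^{\mu}U$, with $\mu=2/(p-1)$ and $N=n-2$, is autonomous with damping coefficient $N-2\mu<0$. The positive equilibrium $w_*$ is then a source. The point $w=0$ is a saddle whose one-dimensional stable manifold consists exactly of the orbits with $U\sim\ell r^{-N}$. Since the energy increases along orbits, every other positive orbit except $w\equiv w_*$ reaches $w=0$ at a finite radius. So fast decay is a codimension-one matching condition. It is not an open region into which a large accumulated energy can push the orbit.

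The sign is also reversed. On your middle range $P'=\frac{r^{n-1}cU^{p+1}}{p+1}(d_*-d)<0$, while on the constant tail $P'>0$. Fast decay requires the exact balance $P(\infty)=0$, and any orbit with $P\ge0$ at the start of the constant region crosses zero.

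With $c$ prescribed and $U(0)=1$ there is no free parameter to enforce this balance. Shooting in $U(0)$ does not immediately help either, because both extremes give crossing solutions:
\begin{itemize}
\item as $U(0)\to0$, the solution stays almost flat until it enters the subcritical tail;
\item as $U(0)\to\infty$, it crosses zero while $c\equiv1$.
\end{itemize}
An intermediate-value argument would therefore need a finer invariant, such as the side of the stable manifold on which the orbit passes. That shooting analysis is missing.

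Your last step, $cU^p\in L^1$, also presupposes fast decay. Slow decay $U\asymp r^{-\mu}$ gives $\mu p=\mu+2<n$, so $cU^p\notin L^1$ in that case.

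\textbf{How the paper avoids this.} The paper removes the matching problem by reversing the roles of $c$ and $U$. In Lemma~\ref{lem:phaseconstruction} it prescribes the orbit $y=-\dot U/U=Ne^s/(1+e^s)$, which increases from $0$ to $N$. This builds in both regularity at the origin and the $r^{-N}$ tail. It then sets $z=h(s)\,y(N-y)$ with $d\tau/ds=1/(N(h-1))$, and defines $c=r^{-2}U^{1-p}z$.

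The defect is then given exactly by $d=d_*+(h-h_*)(2y-N)-N\frac{h-1}{h}h'$, where $h_*=(p+1)/2$ is the zero level of the Pohozaev quantity. The paper chooses
\begin{itemize}
\item $h_T=h_-$ for $s\le-2T$,
\item $h_T=h_*$ on $[-T,T]$,
\item $h_T=h_+$ for $s\ge2T$,
\end{itemize}
with nonincreasing transitions of slope $O(1/T)$. This gives $0\le d\le d_*+C/T$. Hence $c_T$ is smooth and nonincreasing, has positive limits at $0$ and $\infty$, and satisfies $D(c_T)\le d_*+C/T$, with no shooting required.
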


For $q=p$, $\alpha=\beta=0$, $\rho=2$, and $a_0=b_0=c$, the sufficient condition in Corollary~\ref{cor:pde} is $D(c)<d_*$. Theorem~\ref{thm:radialthreshold} identifies the same value as the infimum among admissible radial coefficient--solution pairs, while excluding attainment at equality. The endpoint obstruction belongs to classical Pohozaev theory: Kawano, Yanagida and Yotsutani~\cite[Theorem~2]{KYY} give the radial criterion used here, with related criteria in~\cite{KNY,KYYm,YY,Yanagida}. Section~\ref{sec:radial} includes a direct radial-flux proof, and Remark~\ref{rem:radialcomparison} compares the assumptions. The optimization additionally requires a construction whose defect approaches the endpoint. The coefficient bounds $c_-,c_+$ may vary in this construction.

We now fix their ratio. For $M\ge1$, define
\begin{equation}\label{eq:contrastinfimum}
\mathcal R(c)=\frac{\sup_{r\ge0}c(r)}{\inf_{r\ge0}c(r)},\qquad
D_{\mathrm{crit}}(n,p;M)
=\inf_{\substack{(c,U)\in\mathcal A_{n,p}\\\mathcal R(c)\le M}}D(c).
\end{equation}
We use $\inf\varnothing=+\infty$.

\begin{theorem}\label{thm:contrast}
Assume \eqref{eq:radialrange}. For every finite $M\ge1$, there is
$\delta(n,p,M)>0$ such that
\begin{equation}\label{eq:contrastgap}
D(c)\ge d_*+\delta(n,p,M)
\quad\hbox{if }(c,U)\in\mathcal A_{n,p}\hbox{ and }\mathcal R(c)\le M.
\end{equation}
There are constants $C_{n,p}>0$ and $M_0(n,p)>1$ such that
\begin{equation}\label{eq:contrastupper}
d_*<D_{\mathrm{crit}}(n,p;M)
\le d_*+\frac{C_{n,p}}{\log M}
\qquad(M\ge M_0(n,p)).
\end{equation}
In particular, every sequence $(c_j,U_j)\in\mathcal A_{n,p}$ with
$D(c_j)\to d_*$ satisfies $\mathcal R(c_j)\to\infty$.
\end{theorem}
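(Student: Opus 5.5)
The plan rests on the radial Pohozaev (flux) identity together with the dilation invariance of the problem. The lower bound \eqref{eq:contrastgap} will come from a compactness argument. The upper bound in \eqref{eq:contrastupper} will come from a quantitative version of the construction in Theorem~\ref{thm:radialthreshold}.

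\emph{The flux identity.} For a radial pair $(c,U)\in\mathcal A_{n,p}$ set
\[
P(r)=r^n\Bigl(\tfrac12U'(r)^2+\tfrac{c(r)U(r)^{p+1}}{p+1}\Bigr)+\tfrac{n-2}2r^{n-1}U(r)U'(r).
\]
A direct computation using $-\Delta U=cU^p$ gives
\[
P'(r)=\frac{r^{n-1}U^{p+1}}{p+1}\bigl(d_*c(r)+rc'(r)\bigr)=\frac{r^{n-1-d_*}U^{p+1}}{p+1}\,\frac{d}{dr}\bigl(r^{d_*}c(r)\bigr).
\]
This uses $n-\frac{(n-2)(p+1)}2=d_*$. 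We have $P(0^+)=0$. Next I will show $P(r)\to0$ as $r\to\infty$. Since $p>\frac n{n-2}$, a positive entire solution decays either like $r^{2-n}$ or like $r^{-2/(p-1)}$; these rates follow from the radial ODE and bounded $c$. In either case every term of $P$ is $O(r^{n-2-4/(p-1)})$ or smaller. This exponent is negative because $p<\frac{n+2}{n-2}$. Hence
\begin{equation*}
\int_0^\infty r^{n-1-d_*}U^{p+1}\,d\bigl(r^{d_*}c\bigr)=0 .
\end{equation*}
The condition $D(c)\le d$ is equivalent to monotonicity of $r\mapsto r^{d}c(r)$. Therefore, if $D(c)\le d_*$, the measure $d(r^{d_*}c)$ is nonnegative. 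Since $U>0$, the identity then forces $r^{d_*}c\equiv\mathrm{const}$, i.e.\ $c\propto r^{-d_*}$. This contradicts $c_-\le c\le c_+$, so $D(c)>d_*$. The argument uses only monotonicity of $r^{d_*}c$, so it extends verbatim to coefficients that are merely bounded and monotone in this sense, with $d(r^{d_*}c)$ read as a Stieltjes measure.

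\emph{Uniform gap under bounded ratio.} Suppose \eqref{eq:contrastgap} fails for some $M$. Then there are $(c_j,U_j)$ with $\mathcal R(c_j)\le M$ and $D(c_j)\downarrow d_*$. The problem is invariant under $c\mapsto c(\lambda\cdot)$, $U\mapsto\lambda^{2/(p-1)}U(\lambda\cdot)$, and under multiplying $c$ by a constant with $U$ rescaled accordingly; $D$ and $\mathcal R$ do not change. I therefore normalize $\sup c_j=1$ and $U_j(0)=1$. The functions $r^{D(c_j)}c_j$ are monotone and lie in $[M^{-1},1]$ on compact subsets of $(0,\infty)$, so Helly's theorem gives a subsequence converging pointwise a.e.\ to some $c_\infty$. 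The limit satisfies $M^{-1}\le c_\infty\le1$ and has $r^{d_*}c_\infty$ nondecreasing. Since $U_j$ is radially decreasing with $U_j(0)=1$, the integral form of the ODE gives uniform $C^{1,1}_{\rm loc}$ bounds. Hence $U_j\to U_\infty$ locally uniformly, where $U_\infty$ is a positive radial entire solution of $-\Delta U_\infty=c_\infty U_\infty^p$ in the integrated sense, with $U_\infty(0)=1$. The flux argument above, in its Stieltjes form, then yields a contradiction.

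The point I must check carefully is $P_\infty(\infty)=0$ for this limit with nonsmooth $c_\infty$. The decay dichotomy should still hold, because it needs only two-sided bounds on $c$. Once \eqref{eq:contrastgap} is established, the strict lower inequality in \eqref{eq:contrastupper} is immediate. It also yields the final claim: if $D(c_j)\to d_*$ with $\mathcal R(c_j)$ bounded along a subsequence, that subsequence would contradict the gap for that bound.

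\emph{Quantitative upper bound (main obstacle).} I will revisit the construction of Theorem~\ref{thm:radialthreshold} with explicit bookkeeping. The coefficient is $c\equiv1$ on $[0,1]$, then $c(r)=r^{-d}$ with $d=d_*+\eta$ on $[1,R]$, then $c\equiv R^{-d}$ for $r\ge R$, with smoothing at the two junctions; thus $\log M\approx d\log R$. Over the long middle range the equation is exactly homogeneous with slightly supercritical effective exponent. In Emden--Fowler variables $t=\log r$ it becomes an autonomous ODE with a small dissipation of order $\eta$. The task is to choose the shooting datum $U(0)$ so that the trajectory leaves the inner region close to the critical (Aubin--Talenti-type) homoclinic orbit. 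It must then drift along that orbit for a time of length $\log R$, and finally exit into the fast-decay regime $U\sim mr^{2-n}$ after $c$ freezes. The energy balance along the phase curve changes by $\asymp\eta$ per unit of $t$. Matching the drift to the available length $\log R$ therefore requires $\eta\log R\asymp1$, which gives $\eta\le C_{n,p}/\log M$.

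Making this matching rigorous is where I expect the difficulty. It requires a continuity/shooting argument, varying $U(0)$, between trajectories that blow down too early (crossing zero) and those that remain slowly decaying. The phase-plane estimates must have constants uniform in $R$, and the final approximating family should be identified, after normalization, with the critical profile.
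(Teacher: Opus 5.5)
Your proof of \eqref{eq:contrastgap}, of the strict inequality in \eqref{eq:contrastupper}, and of the final claim follows the paper's route: scaling normalization, Helly compactness for $r^{D(c_j)}c_j$, passage to the limit in the flux formula, and the Pohozaev identity read as a Stieltjes measure, as in Lemmas~\ref{lem:radialcompactness} and~\ref{lem:weakendpoint}. Two small details need to be added. On $[a,b]$ the correct bound is $M^{-1}r^{D(c_j)}\le r^{D(c_j)}c_j\le r^{D(c_j)}$, not $[M^{-1},1]$. Positivity of $U_\infty$ also needs an argument: a zero $r_0$ would force $U_\infty'(r_0)<0$ by the limiting flux formula, which contradicts $U_\infty\ge0$ beyond $r_0$.

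The genuine gap is the upper bound $D_{\mathrm{crit}}(n,p;M)\le d_*+C_{n,p}/\log M$. This bound requires an actual pair in $\mathcal A_{n,p}$ with $\mathcal R(c)\le M$ and defect $d_*+O(1/\log M)$. Your proposal prescribes $c$ and leaves open whether a positive entire solution exists.

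Moreover, the shooting dichotomy you intend to use is not available. Once your coefficient is the constant $R^{-d}$ near infinity, put $w=r^{\mu}U$ and $t=\log r$. Then
\[
w''+(N-2\mu)w'-\mu(N-\mu)w+R^{-d}w^p=0,\qquad N-2\mu<0,
\]
so the energy $\tfrac12w'^2-\tfrac{\mu(N-\mu)}2w^2+\tfrac{R^{-d}}{p+1}w^{p+1}$ is nondecreasing in $t$. Hence the singular equilibrium is repelling. Apart from that equilibrium itself, a solution that stays positive must converge to $w=0$ along its stable manifold, that is, decay like $r^{2-n}$. This is a codimension-one condition, so there is no open class of ``slowly decaying'' solutions to shoot against. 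Solutions on both sides of a would-be positive solution cross zero. You would need a finer invariant together with estimates uniform in $R$. The heuristic $\eta\log R\asymp1$ is also not derived.

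The paper avoids shooting by solving the inverse problem in Lemma~\ref{lem:phaseconstruction}. One prescribes the Emden--Fowler orbit $y=Ne^s/(1+e^s)$, $z=h(s)y(N-y)$, with $d\tau/ds=1/(N(h-1))$. Then $U$ is defined by integrating $-y$, and $c=r^{-2}U^{1-p}z$. Membership in $\mathcal A_{n,p}$ is automatic, and the signed defect is given exactly by \eqref{eq:phasecontrol}.

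With $h_T=h_*$ on $[-T,T]$ and nonincreasing transitions of length $T$ to $h_\pm$, one gets $0\le d_T\le d_*+C_1/T$. Thus $c_T$ is monotone, and $\log\mathcal R(c_T)=\int d_T\,d\tau\le 4d_*T/\lambda+C_2$. This uses $d\tau/ds\le1/\lambda$ on $[-2T,2T]$ and the fact that the tails contribute $O(1)$. Choosing $T\asymp\log M$ gives \eqref{eq:contrastupper}; see Proposition~\ref{prop:contrastconstruction}.
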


The gap in \eqref{eq:contrastgap} depends only on $n,p,M$. No common upper bound on $rc^{\prime}/c$ is required. The proof uses the monotonicity of $r^Dc(r)$ when $D(c)\le D$. The limit may have jumps. A Pohozaev identity for measures excludes $D=d_*$. Proposition \ref{prop:criticallimit} gives the critical weighted limit of the constructed family. The exact value of $D_{\mathrm{crit}}(n,p;M)$ and a lower estimate matching the order in \eqref{eq:contrastupper} are not determined here.

\subsection*{Existence ranges and sharp thresholds for weighted Newton systems}

The scalar problem exhibits two features to be tested for systems: an unattained defect threshold and a strict additional loss at fixed coefficient ratios. For unequal powers and nonzero weights, the relevant quantity is the combined defect, with each coefficient loss divided by its nonlinear power plus one. We first identify the parameter range in which some admissible coefficients can support a solution. This necessary scaling restriction is distinct from the defect bound and must be imposed before optimizing over coefficient--solution pairs. Define, when $pq>1$,
\begin{equation}\label{eq:necessaryscales}
\mu=\frac{\rho+\alpha+p(\rho+\beta)}{pq-1},\qquad
\nu=\frac{\rho+\beta+q(\rho+\alpha)}{pq-1}.
\end{equation}

\begin{proposition}[Necessary range and Newton realization]\label{prop:existencerange}
Assume \eqref{eq:parameters}, \eqref{eq:kernel}, and \eqref{eq:coeff}. If \eqref{eq:system} has a positive entire solution with finite potentials, then
\begin{equation}\label{eq:necessaryrange}
pq>1,\qquad \max\{\mu,\nu\}<n-\rho,
\end{equation}
and
\begin{equation}\label{eq:necessarydefect}
\mathfrak d_K+\frac{\mathfrak d_a}{p+1}
+\frac{\mathfrak d_b}{q+1}\ge\gamma_0.
\end{equation}
For the Newton kernel in dimension $n\ge3$, with $\rho=2$ and $\alpha,\beta>-2$, condition \eqref{eq:necessaryrange} is also sufficient for the existence of some positive radial coefficients $a_0,b_0$ satisfying \eqref{eq:coeff}, with constants allowed to depend on the coefficients, and some positive entire solution with finite potentials.
\end{proposition}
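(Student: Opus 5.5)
The plan is to prove the three parts separately: \eqref{eq:necessarydefect} is a direct contrapositive of Theorem~\ref{thm:main}, \eqref{eq:necessaryrange} comes from a lower-bound bootstrap, and sufficiency comes from an explicit radial construction.

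\emph{Defect inequality.} The intrinsic defects $\mathfrak d_K,\mathfrak d_a,\mathfrak d_b$ are finite under \eqref{eq:kernel}--\eqref{eq:coeff}. With $\varepsilon_K=\mathfrak d_K$, $\varepsilon_a=\mathfrak d_a$, $\varepsilon_b=\mathfrak d_b$, assumption \eqref{eq:radialdefect} holds. If \eqref{eq:necessarydefect} failed, then $\Gamma>0$ in \eqref{eq:effectivegap}, and Theorem~\ref{thm:main} would rule out any positive solution. So \eqref{eq:necessarydefect} is immediate.

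\emph{Scaling range \eqref{eq:necessaryrange}.} I would bootstrap power lower bounds at infinity against the finiteness of the potentials.
\begin{itemize}
\item Finiteness of $u(0)$ and $v(0)$, together with $K\ge c_0r^{\rho-n}$, gives
\[
\int_{|y|>1}|y|^{\rho-n+\alpha}v^p\,dy<\infty,\qquad \int_{|y|>1}|y|^{\rho-n+\beta}u^q\,dy<\infty .
\]
\item Restricting the integrals to $B_1$ gives $u,v\ge c|x|^{\rho-n}$ for $|x|\ge2$.
\item If $v\ge c|x|^{-s}$ for large $|x|$, integrating over $1<|y|<|x|$ (where $K(|x-y|)\ge c|x|^{\rho-n}$) gives $u\ge c|x|^{-t}$ with $t=\min\{n-\rho,\;ps-\rho-\alpha\}$. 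The symmetric statement holds with $q,\beta$.
\item Integrability forces $ps>\rho+\alpha$ and $qt>\rho+\beta$ at every stage. Otherwise the tail integrals above diverge.
\end{itemize}
Once the minimum is attained by the second entry, the composed map is $s\mapsto q(ps-\rho-\alpha)-\rho-\beta$, that is, $s_{k+1}-\nu=pq(s_k-\nu)$, with $\nu$ as in \eqref{eq:necessaryscales}.
\begin{itemize}
\item If $pq<1$, then $\nu<0$ and $s_k\to\nu<0$.
\item If $pq=1$, then $s_k$ decreases by a fixed positive amount each step.
\item If $pq>1$ and $n-\rho<\nu$, then $s_k\to-\infty$.
\end{itemize}
In every case integrability eventually fails. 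The same argument with the roles exchanged treats $\mu$.

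The borderline $n-\rho=\nu$ (or $=\mu$) is the step I expect to be the main obstacle. There the exponent is stationary, and one must extract logarithmic gains. The borderline integral $\int_1^{|x|}r^{-1}dr$ yields $u\ge c|x|^{-t}\log|x|$. Iterating, the log power multiplies by $pq>1$ each round, since $s_0=\nu$ at the fixed point. I would track $v\ge c|x|^{-\nu}(\log|x|)^{\gamma_k}$ with $\gamma_k\to\infty$. Then $\int|y|^{\rho-n+\beta}u^q\,dy$ is borderline-divergent, which is a contradiction. Care is needed because the first iterate already saturates at $n-\rho$; I would start the log gain from the annulus integration rather than from $B_1$.

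\emph{Newton realization.} Assume $pq>1$ and $0<\mu,\nu<n-2$ (positivity is automatic from $\alpha,\beta>-2$). Set
\[
u=(1+r^{2+\alpha})^{-\mu/(2+\alpha)},\qquad v=(1+r^{2+\beta})^{-\nu/(2+\beta)},
\]
and define
\[
a_0=\frac{-\Delta u}{r^{\alpha}v^p},\qquad b_0=\frac{-\Delta v}{r^{\beta}u^q}.
\]
Near $0$, $u=1-c\,r^{2+\alpha}+\dots$, so $-\Delta u\sim c(2+\alpha)(n+\alpha)r^\alpha$. At infinity, $-\Delta u\sim\mu(n-2-\mu)r^{-\mu-2}$, which is positive precisely because $\mu<n-2$. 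The identities $\mu+2=p\nu-\alpha$ and $\nu+2=q\mu-\beta$ (equivalent to \eqref{eq:necessaryscales}) make $a_0$ and $b_0$ bounded above and below at infinity. A direct computation should show $-\Delta u>0$ for all $r$; if that fails I would adjust the profile in the middle range. Smoothness away from $0$ and the bound $r|(\log a_0)'|\le L$ follow because $a_0$ is a function of $r^{2+\alpha}$ and $r^{2+\beta}$ with finite positive limits at both ends. Finally, $u\to0$ and $-\Delta u\in L^1_{\mathrm{loc}}$ with decay $r^{-\mu-2}$ and $\mu>0$. Hence $w=u-K*(a v^p)$ is bounded and harmonic and tends to $0$, so $w\equiv0$ by Liouville. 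This gives \eqref{eq:system} with finite potentials.
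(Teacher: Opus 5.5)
Your proof of \eqref{eq:necessarydefect} matches the paper. Your Newton realization is the paper's construction. The hedge about $-\Delta u$ is unnecessary: $-\Delta u=\mu r^{\alpha}[n+\alpha+(n-2-\mu)r^{2+\alpha}](1+r^{2+\alpha})^{-\mu/(2+\alpha)-2}$, which is manifestly positive for $\mu<n-2$. Your exponent bootstrap against finiteness of $u(0),v(0)$ also correctly yields $pq>1$ and $\max\{\mu,\nu\}\le n-\rho$. This is a legitimate alternative to Lemma~\ref{lem:smallproduct} and Proposition~\ref{prop:mass}.

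\textbf{The gap is the strict inequality in \eqref{eq:necessaryrange}.} The contradiction you propose at the endpoint does not occur. Near the fixed point your lower bounds have the form $u\ge c|y|^{-\mu}(\log|y|)^{\gamma}$. In polar coordinates, the integrand of $\int_{|y|>1}|y|^{\rho-n+\beta}u^q\,dy$ is then bounded below only by $r^{\rho+\beta-q\mu-1}(\log r)^{q\gamma}=r^{-\nu-1}(\log r)^{q\gamma}$. Since $\nu>0$, this is integrable at infinity for every $\gamma$. The other integral behaves the same way, with margin $\mu>0$. So the finiteness of the potentials at the origin keeps a fixed power margin at the fixed point and never becomes borderline. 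Log powers $\gamma_k\to\infty$, with constants and radius thresholds degrading in $k$, give no contradiction without substantial further quantitative work. What is borderline at $\nu=n-\rho$ is the \emph{unweighted} mass: there $|y|^\beta u^q\gtrsim|y|^{\beta-q\mu}=|y|^{-n}$.

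\textbf{The missing ingredient is a global mass bound.} The paper uses Proposition~\ref{prop:mass}: $\int_{B_R}f\le CR^{n-\rho-\nu}$ and $\int_{B_R}g\le CR^{n-\rho-\mu}$, obtained from the ball-minimum argument and centered dilation. At $\mu=n-\rho$ this gives $g\in L^1(\mathbb R^n)$. On the other hand, $u\ge c|x|^{\rho-n}$ and an annulus integration give $v\ge c|x|^{-\nu}$. Hence $g\ge c|x|^{\alpha-p\nu}=c|x|^{-n}$, a contradiction that needs no logarithms; the case $\nu=n-\rho$ is symmetric.

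If you prefer to stay with pointwise finiteness, you need an estimate that is uniform across scales. For instance, at the endpoint, $m(R)=\int_{B_R}f$ satisfies $m(R)\ge c\,m(R/4)^{pq}$, and $m(R)\to\infty$. Together these force super-polynomial growth of $m$, hence of $v\ge c|x|^{\rho-n}m(|x|)$. That contradicts $u(0)<\infty$.
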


The sufficiency assertion is existential: the coefficients are chosen together with the solution. It does not assert existence for every prescribed pair of coefficients. In particular, it is consistent with Theorem~\ref{thm:main}.

We now fix the Newton kernel
\[
\Phi(x)=\frac{|x|^{2-n}}{(n-2)|\mathbb S^{n-1}|}
\]
and restrict to the admissible subcritical range
\begin{equation}\label{eq:systemradialrange}
\begin{gathered}
n\ge3,\quad p,q>0,\quad pq>1,\quad \alpha,\beta>-2,\quad N=n-2,\\
s=\frac{2+\alpha+p(2+\beta)}{pq-1}<N,\qquad
t=\frac{2+\beta+q(2+\alpha)}{pq-1}<N,\\
\gamma_0=\frac{n+\alpha}{p+1}+\frac{n+\beta}{q+1}-N>0.
\end{gathered}
\end{equation}
Both $s$ and $t$ are positive. Let $\mathcal B=\mathcal B_{n,p,q}^{\alpha,\beta}$ consist of positive radial quadruples $(a_0,b_0,u,v)$ such that $a_0,b_0\in C^1((0,\infty))$, $u,v\in C(\mathbb R^n)\cap C^2(\mathbb R^n\setminus\{0\})$, and
\[
\begin{cases}
-\Delta u=r^\alpha a_0(r)v^p,\\
-\Delta v=r^\beta b_0(r)u^q,(r>0),
\end{cases}
\]
with
\[
0<\inf a_0\le\sup a_0<\infty,\qquad
0<\inf b_0\le\sup b_0<\infty,\qquad
\sup_{r>0}\left(\left|\frac{ra_0'}{a_0}\right|+
\left|\frac{rb_0'}{b_0}\right|\right)<\infty.
\]
The bounds may depend on the quadruple. Lemma~\ref{lem:newton} identifies this class with the corresponding radial Newton potential solutions. For a radial coefficient $c$, use $D(c)$ as in \eqref{eq:radialinfimum}, and set
\begin{equation}\label{eq:systemdefects}
\begin{aligned}
\mathcal D(a_0,b_0)&=\frac{D(a_0)}{p+1}+\frac{D(b_0)}{q+1},\\
\mathcal R(a_0,b_0)&=\max\left\{\frac{\sup a_0}{\inf a_0},
\frac{\sup b_0}{\inf b_0}\right\},\\
\mathcal D_{\mathrm{crit}}&=\inf_{\mathcal B}\mathcal D(a_0,b_0),\qquad
\mathcal D_{\mathrm{crit}}(M)=
\inf_{\substack{\mathcal B\\\mathcal R(a_0,b_0)\le M}}\mathcal D(a_0,b_0).
\end{aligned}
\end{equation}
Again, the infimum of an empty class is $+\infty$.

\begin{theorem}[Sharp defect threshold for weighted Newton systems]\label{thm:systemthreshold}
Assume \eqref{eq:systemradialrange}. Every quadruple in $\mathcal B$ satisfies
\begin{equation}\label{eq:systemstrict}
\mathcal D(a_0,b_0)>\gamma_0,
\end{equation}
whereas
\begin{equation}\label{eq:systeminfimum}
\mathcal D_{\mathrm{crit}}=\gamma_0.
\end{equation}
Thus the infimum is not attained. For every $\varepsilon>0$, a quadruple can be chosen with
\[
\gamma_0<\mathcal D(a_0,b_0)\le\gamma_0+\varepsilon,
\]
with all four radial profiles smooth on $(0,\infty)$ and with $a_0,b_0$ constant for sufficiently large $r$. Its solution has exact power tails
\[
u(r)=C_u r^{-s},\qquad v(r)=C_v r^{-t}
\]
for sufficiently large $r$, where $C_u,C_v>0$.
\end{theorem}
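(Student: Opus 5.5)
The proof splits into the lower bound \eqref{eq:systemstrict}, which shows the threshold is not attained, and an explicit construction for \eqref{eq:systeminfimum}.

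\emph{Lower bound.} By Lemma~\ref{lem:newton}, each quadruple in $\mathcal B$ gives radial Newton potential solutions. Set $\varepsilon_a=D(a_0)$, $\varepsilon_b=D(b_0)$ and $\varepsilon_K=0$. Then \eqref{eq:coeff}--\eqref{eq:radialdefect} hold, with constants depending on the quadruple. Theorem~\ref{thm:main} excludes $\gamma_0>\mathcal D(a_0,b_0)$, so $\mathcal D\ge\gamma_0$.

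To exclude equality I would use a radial Pohozaev (flux) identity. Put $W(r)=r^{n-1}\bigl(u'v'r+\text{boundary terms}\bigr)$, where the boundary terms are the usual combination $\theta_1 r^{n-1}u'v+\theta_2 r^{n-1}uv'$ with $\theta_1+\theta_2=N$ and $\theta_1=(n+\alpha)/(q+1)$-type choices. Differentiating gives
\[
W'(r)=r^{n-1}\Bigl[\Bigl(\tfrac{n+\alpha+ra_0'/a_0}{p+1}-\theta\Bigr)r^\alpha a_0v^{p+1}+\Bigl(\tfrac{n+\beta+rb_0'/b_0}{q+1}-\theta'\Bigr)r^\beta b_0u^{q+1}\Bigr]
\]
plus derivatives of $r^n a_0 v^{p+1}/(p+1)$ and $r^nb_0u^{q+1}/(q+1)$. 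The parameters $\theta,\theta'$ have a free splitting with $\theta+\theta'=N$. When $\mathcal D=\gamma_0$, I would choose the splitting as $\theta=(n+\alpha-D(a_0))/(p+1)$ and $\theta'=(n+\beta-D(b_0))/(q+1)$. Then $W'\ge0$ pointwise.

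I then need $W(0^+)=0$ and $W(r)\to0$ along a sequence $r\to\infty$. At the origin this follows from continuity and $\alpha,\beta>-2$. At infinity I would use potential decay bounds: $u\lesssim r^{-\min(s,N)}$ and $v$ similarly, obtained from the necessary-range estimates behind Proposition~\ref{prop:existencerange}, together with the subcritical relations $s,t<N$ and $\gamma_0>0$. If this holds, $W'\equiv0$. That forces $ra_0'/a_0\equiv -D(a_0)$ and $rb_0'/b_0\equiv-D(b_0)$ wherever the sources are nonzero, which is everywhere. The coefficients would then be pure powers $r^{-D}$. These are unbounded (or not bounded below) unless $D=0$, and $D=0$ contradicts $\gamma_0>0$. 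The main obstacle I expect is controlling the flux at infinity without any decay assumption. My first attempt would mirror the scalar case: use the monotonicity of $r^{D(a_0)}a_0$ to get limits, and bound the boundary terms through the integrated equation $r^{n-1}|u'|=\int_0^r\tau^{n-1}\tau^\alpha a_0v^p$.

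\emph{Construction.} I would use Emden--Fowler variables $u=r^{-s}X(\log r)$ and $v=r^{-t}Y(\log r)$, which turn the system into an autonomous (apart from coefficients) quasi-planar ODE. The plan has three pieces. For $r\ge R_1$, take $a_0,b_0$ constant and the exact singular power pair $u=C_ur^{-s}$, $v=C_vr^{-t}$; this is a genuine solution because $s,t<N$ make the constants $C_u,C_v>0$ solvable. For small $r$, take a regular solution with constant coefficients. These constant-coefficient regular solutions are the Lane--Emden--Hénon solutions; they are supercritical or critical as appropriate, and the subcritical ones blow up or change sign, which is why a varying layer is needed. In between, connect the two pieces by letting $a_0=A r^{-D_a}$ and $b_0=Br^{-D_b}$ on a long interval $[R_0,R_1]$, with $D_a/(p+1)+D_b/(q+1)=\gamma_0+\varepsilon/2$. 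On that interval the effective exponents $\alpha-D_a$, $\beta-D_b$ are slightly supercritical. The corresponding homogeneous system then has a slowly decaying regular ground state, in the spirit of \cite{BG}, that tracks the singular solution in phase space. I would choose $R_1/R_0$ large, using a shooting argument with continuity in the initial scale, so that the trajectory reaches the singular equilibrium's stable manifold. Finally I would smooth the junctions at cost $\le\varepsilon/2$ in $\mathcal D$, and adjust the amplitude on the last piece to match exactly the equilibrium point. This matching is the second delicate step, and I would handle it by exploiting the dilation invariance and a one-parameter intermediate value argument.
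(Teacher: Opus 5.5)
Your strict lower bound is essentially the paper's proof of Proposition~\ref{prop:systemendpoint}. It uses the same radial Pohozaev function, with the potential terms $\frac{r^{n+\alpha}a_0v^{p+1}}{p+1}+\frac{r^{n+\beta}b_0u^{q+1}}{q+1}$ belonging to $W$ itself, the weight $\ell=(n+\alpha-D(a_0))/(p+1)$ on $r^{n-1}vu'$, and $N-\ell$ on $r^{n-1}uv'$. Two simplifications are available:
\begin{itemize}
\item You do not need Theorem~\ref{thm:main}. With this splitting the derivative acquires the extra term $\Gamma r^{n-1+\beta}b_0u^{q+1}$, where $\Gamma=\gamma_0-\mathcal D(a_0,b_0)$. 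One identity therefore excludes $\Gamma>0$ and $\Gamma=0$ together.
\item The decay you flag as the main obstacle is elementary here. Monotonicity of the fluxes gives $u\ge -ru'/N\ge c\,r^{A}v^p$ and $v\ge c\,r^{B}u^q$. Hence $u\le Cr^{-s}$, $v\le Cr^{-t}$, $|u'|\le Cr^{-s-1}$, $|v'|\le Cr^{-t-1}$, and $|W(r)|\le Cr^{N-s-t}\to0$ (Lemma~\ref{lem:systemdecay}). Your alternative, the mass bounds of Proposition~\ref{prop:mass} combined with the flux formula, also works.
\end{itemize}

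The genuine gap is in the construction, at the matching step. The theorem requires $a_0,b_0$ to be constant and $(u,v)=(C_ur^{-s},C_vr^{-t})$ \emph{exactly} for large $r$.
\begin{itemize}
\item With constant coefficients, $X=r^su$ and $Y=r^tv$ solve an autonomous system in $\tau=\log r$, and the power pair is an equilibrium. By uniqueness, your trajectory equals it on $[\log R_1,\infty)$ only if it already sits at the equilibrium at $\tau=\log R_1$.
\item That requires both logarithmic slopes to equal $s$ and $t$ there. The incoming coefficient values must also agree with $s(N-s)C_u/C_v^p$ and $t(N-t)C_v/C_u^q$, together with their derivatives if $a_0,b_0\in C^1$ and $u,v\in C^2$ are to hold.
\item These are several independent scalar conditions. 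A one-parameter intermediate-value argument cannot enforce them, and you give no multi-parameter substitute.
\item Landing on a stable manifold would give at best asymptotic tails, not exact ones. The linearized eigenvalues at this equilibrium sum to $2(s+t-N)>0$, so it is not an attractor.
\item Your slightly supercritical middle layer tracks the singular pair for the weights $\alpha-D_a,\beta-D_b$. Its slopes are $s'=(A-D_a+p(B-D_b))/(pq-1)<s$ and $t'<t$, not $(s,t)$. The last junction must move the slopes across this gap while keeping the defect below $\gamma_0+\varepsilon$.
\item In a forward shooting scheme there is no mechanism behind the claimed ``cost $\le\varepsilon/2$''. Every adjustment of the coefficients changes the trajectory you are trying to match.
\end{itemize}

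The missing idea is to invert the problem, as in Lemma~\ref{lem:systemconstruction}. Prescribe smooth slopes $x=-d\log u/d\tau$ and $y=-d\log v/d\tau$ with $z=x'+x(N-x)>0$ and $w=y'+y(N-y)>0$. Then \emph{define} $a_0=e^{-A\tau}uv^{-p}z$ and $b_0=e^{-B\tau}vu^{-q}w$. Three things follow:
\begin{itemize}
\item The equations hold identically.
\item Exact tails with constant coefficients come from setting $(x,y)\equiv(s,t)$ for large $\tau$.
\item The signed defects are the explicit expressions $A+x-py-z'/z$ and $B+y-qx-w'/w$.
\end{itemize}
The paper then proceeds in three steps. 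It keeps a piece of logarithmic length $O(T)$ of the exactly critical Bidaut-V\'eron--Giacomini ground state for the weights $\alpha-d_a,\beta-d_b$, with $d_a/(p+1)+d_b/(q+1)=\gamma_0$; on this piece the defects equal $d_a,d_b$ exactly. It bends the slopes near both ends over intervals of length $O(T)$, treating fast and slow endpoints separately, at a cost of $C/T$. Finally it interpolates linearly to $(s,t)$. This last step costs nothing extra, because $A+x-py$ and $B+y-qx$ are affine, vanish at $(s,t)$, and are at most $d_a,d_b$ at the starting point. No shooting or matching is needed.
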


\begin{theorem}[Coefficient ratios for weighted Newton systems]\label{thm:systemcontrast}
Assume \eqref{eq:systemradialrange}. For every finite $M\ge1$, there is
$\delta=\delta(n,p,q,\alpha,\beta,M)>0$ such that
\begin{equation}\label{eq:systemcontrastgap}
\mathcal D(a_0,b_0)\ge\gamma_0+\delta
\qquad\text{for every quadruple in $\mathcal B$ with $\mathcal R(a_0,b_0)\le M$}.
\end{equation}
There are constants $C>0$ and $M_0>1$, depending only on $n,p,q,\alpha,\beta$, such that
\begin{equation}\label{eq:systemcontrastrate}
\gamma_0<\mathcal D_{\mathrm{crit}}(M)
\le\gamma_0+\frac C{\log M}\qquad(M\ge M_0).
\end{equation}
Consequently, every sequence in $\mathcal B$ with $\mathcal D(a_{0,j},b_{0,j})\to\gamma_0$ satisfies $\mathcal R(a_{0,j},b_{0,j})\to\infty$.
\end{theorem}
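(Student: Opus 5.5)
The proof will mirror the scalar argument of Theorem~\ref{thm:contrast}, lifted to the system by the same radial Pohozaev/flux machinery that gives the strict inequality \eqref{eq:systemstrict} in Theorem~\ref{thm:systemthreshold}. The contrast gap \eqref{eq:systemcontrastgap} will be proved by contradiction and compactness, and the upper rate \eqref{eq:systemcontrastrate} by a quantitative version of the approximating construction of Theorem~\ref{thm:systemthreshold}.

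\emph{Gap.} Suppose quadruples $(a_{0,j},b_{0,j},u_j,v_j)\in\mathcal B$ satisfy $\mathcal R\le M$ and $\mathcal D(a_{0,j},b_{0,j})\downarrow\gamma_0$; this limit cannot lie below $\gamma_0$ by \eqref{eq:systemstrict}. After passing to a subsequence, $D(a_{0,j})\to D_a$ and $D(b_{0,j})\to D_b$ with $D_a/(p+1)+D_b/(q+1)=\gamma_0$. The key monotonicity is that $r^{D(a_{0,j})}a_{0,j}(r)$ and $r^{D(b_{0,j})}b_{0,j}(r)$ are nondecreasing. We normalize $\inf a_{0,j}=\inf b_{0,j}=1$, so the coefficients take values in $[1,M]$. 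We also rescale using the scaling $u\mapsto\lambda^{s}u(\lambda\cdot)$, $v\mapsto\lambda^{t}v(\lambda\cdot)$, which preserves $\mathcal B$, $\mathcal D$ and $\mathcal R$; the scale is fixed by requiring, say, $\max(r^su_j)=1$, or by some comparable normalization at $r=1$. Helly's theorem then gives pointwise limits $a_0,b_0$ with values in $[1,M]$, possibly with jumps, such that $r^{D_a}a_0$ and $r^{D_b}b_0$ are monotone.

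The solutions must be controlled uniformly. Here I expect to use radial potential estimates: $u_j(r)\lesssim r^{-s}$-type bounds derived from the integral equations, using $\mathcal R\le M$, in the spirit of the necessary-range argument of Proposition~\ref{prop:existencerange}. With these bounds the solutions converge locally uniformly to a nontrivial limit solving the system with measure-valued source coefficients. I will then apply the Pohozaev identity for measures, in its system form: the coefficient derivative terms $r\,da_0$ and $r\,db_0$ are measures. The identity yields $\gamma_0\cdot(\text{positive energy})\le\bigl(D_a/(p+1)+D_b/(q+1)\bigr)\cdot(\text{same})$, with equality only if $r^{D_a}a_0$ and $r^{D_b}b_0$ are constant. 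Constancy contradicts boundedness of $a_0$ in $[1,M]$ unless $D_a=D_b=0$, and that case forces $\gamma_0\le0$, which is excluded.

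The main obstacle is preventing the limit from being trivial, i.e.\ loss of mass at $0$ or $\infty$, and justifying the boundary terms in Pohozaev. I would first try to normalize at the scale where $r^{s}u_j$ attains a fixed fraction of its supremum, then use the bounded ratio and the power tails to get two-sided bounds on an annulus.

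\emph{Upper rate.} I will take the construction from Theorem~\ref{thm:systemthreshold}, with $a_0$ and $b_0$ decreasing like $r^{-D_a}$ and $r^{-D_b}$, $D_a/(p+1)+D_b/(q+1)=\gamma_0+\eta$, on a logarithmic interval $[1,R]$, and constant outside it. The ratio is then $\mathcal R\approx R^{\max(D_a,D_b)}$. The excess $\eta$ required to close the phase-curve or shooting argument on an interval of logarithmic length $\log R$ should be $\lesssim1/\log R$, since the near-critical Emden--Fowler flow spends time $\sim1/\eta$ to transit between the equilibria. Setting $\log M\sim\log R$ gives $\eta\le C/\log M$. The final statement about sequences with $\mathcal D(a_{0,j},b_{0,j})\to\gamma_0$ follows directly from the gap.
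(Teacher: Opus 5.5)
\textbf{The upper rate \eqref{eq:systemcontrastrate} has a genuine gap.} Your lower-gap argument follows the paper's route: contradiction, Helly compactness for the nondecreasing functions $r^{D(a_{0,j})}a_{0,j}$ and $r^{D(b_{0,j})}b_{0,j}$, and a measure-valued Pohozaev identity in the equality case.

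For the rate, you prescribe the coefficients: constant on $[0,1]$, pure powers $r^{-D_a},r^{-D_b}$ with supercritical excess $\eta$ on $[1,R]$, and constant beyond $R$. You then assert that a positive entire solution exists once $\eta\gtrsim1/\log R$, supported only by a transit-time heuristic. Existence for prescribed coefficients is the whole difficulty, and the proposal does not address it.
\begin{itemize}
\item With constant coefficients throughout there is no positive solution at all, by Proposition \ref{prop:systemendpoint}. Both of your ends are of that type, so positivity must come from a genuine shooting argument in the two initial values $u(0),v(0)$.
\item Even in the scalar analogue, the function $G_c$ of Remark \ref{rem:radialcomparison} increases, then decreases, then increases. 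So none of the cited criteria applies.
\end{itemize}
This is also not the construction behind Theorem \ref{thm:systemthreshold}. Lemma \ref{lem:systemconstruction} is an \emph{inverse} construction. One prescribes the logarithmic slopes $x,y$ of $u,v$ and \emph{defines} $a_0,b_0$ by \eqref{eq:systeminverse}. Existence is then automatic, and the signed defects are given exactly by \eqref{eq:systemslopeidentity}. The paper keeps the critical weighted ground state, with pure-power coefficients at budget $\gamma_0$, on $[-c_1T,c_1T]$. It modifies the slopes near both ends over intervals of length $O(T)$ at defect cost $O(1/T)$. This gives $\mathcal D\le\gamma_0+C_1/T$ and $\log\mathcal R\le\int(|\mathfrak a|+|\mathfrak b|)\,d\tau\le C_2T$. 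Choosing $T=(\log M)/C_2$ yields \eqref{eq:systemcontrastrate}. These two quantitative bounds are what your rate argument lacks.

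\textbf{Nontriviality of the limit is left open in the compactness step.} You name the possible trivial limit as the main obstacle, but do not resolve it. The normalization you propose also cannot be imposed. After two constant multiplications fix $\inf a_{0,j}=\inf b_{0,j}=1$, the only remaining freedom is $u\mapsto\lambda^su(\lambda\cdot)$, $v\mapsto\lambda^tv(\lambda\cdot)$. The quantity $\sup_r r^su(r)$ is invariant under this scaling, so it cannot fix the scale.

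The paper normalizes at the origin instead: $u_j(0)=v_j(0)=1$, $1\le a_{0,j}\le M$, and $\kappa_j\le b_{0,j}\le M\kappa_j$. The flux equations \eqref{eq:systemflux} bound $\kappa_j$ above and below; otherwise $v_j$, respectively $u_j$, would become negative at a finite radius. This gives the following.
\begin{itemize}
\item Near zero, $|u_j'|\le Cr^{A-1}$ and $|v_j'|\le Cr^{B-1}$, which gives compactness up to $r=0$.
\item The limit has $u(0)=v(0)=1$, and it is positive by the first-zero argument.
\item Lemma \ref{lem:systemdecay}, which is valid for bounded measurable coefficients, gives $P(0)=P(\infty)=0$ for the limit itself.
\end{itemize}
Thus no convergence of energies is needed.

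Your schematic comparison of ``the same'' energy should also be made precise. Take $\ell=(n+\alpha-d_a)/(p+1)$ and $m=N-\ell$ in \eqref{eq:systempohozaev}. This gives $dP=\frac{r^{n+\alpha-d_a}v^{p+1}}{p+1}\,dF_a+\frac{r^{n+\beta-d_b}u^{q+1}}{q+1}\,dF_b\ge0$, which has two different densities.
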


The divergent quantity is the maximum of the two coefficient ratios; the theorem does not require both ratios to diverge separately. When $p=q$, $\alpha=\beta=0$, and $a_0=b_0=c$, one has
\[
\mathcal D(c,c)=\frac{2D(c)}{p+1},\qquad
\gamma_0=\frac{2d_*}{p+1}.
\]
The scalar results additionally give smoothness at the origin, monotone coefficients, fast Newtonian decay, and an explicit limiting profile. These additional conclusions are not imposed in Theorems~\ref{thm:systemthreshold} and~\ref{thm:systemcontrast}.

For general $0<\rho<n$, Proposition~\ref{prop:existencerange} supplies necessary parameter and defect bounds. The sharp infimum, its nonattainment, and the coefficient-ratio gap in the last two theorems are proved only for the fixed Newton kernel and radial solutions. Allowing general fractional kernels, or allowing the kernel to vary in the optimization, requires further arguments.

\subsection*{Main ideas of the proofs and organization}

The Liouville argument follows the localized potential-virial and source-layer interval-pressure framework of~\cite{OpenAI,MeiWei}. The source-layer intervals and directional caps are inherited from these works, as is the combined-interaction viewpoint needed at low order. The additional task here is to establish all estimates uniformly for a structural class with nonhomogeneous kernels and variable source coefficients; no homogeneous Liouville conclusion is used as an input. Centered dilations change both the kernel and the coefficient factors, so every constant used in the final energy estimate must depend only on bounds preserved by dilation. This is also consistent with the rescaling perspective for problems without scale invariance or asymptotic homogeneity in~\cite{Souplet2023}.

With $f=bu^q$ and $g=av^p$, the common kernel defines the interaction measure
\[
f(x)g(y)K(x-y)\,dx\,dy,
\]
whose marginals are $fu\,dx$ and $gv\,dy$. The localized virial identity and the source-layer pressure estimate contain the same boundary interaction with opposite signs. We retain $k(r)=-rK'(r)/K(r)$ until these terms have cancelled and only then use its upper bound. The source derivatives give the coefficient losses in \eqref{eq:effectivegap}. The pressure bound does not use a positive effective gap. For $0<\rho\le1$, the two truncated interactions are combined before the diagonal cutoff is removed, so the argument does not rely on separate absolute integrability of the differentiated kernels. The constants are uniform under dilation, although the cutoff radii may depend on the fixed dilated solution; see Remark~\ref{rem:uniformlimits}. The detailed cancellation is recorded in Remark~\ref{rem:method} at the point where it is used.

For the Newton sharpness results, the scalar construction prescribes an Emden--Fowler phase curve and controls the logarithmic derivative of the resulting coefficient. The coupled construction starts from a critical weighted radial ground state supplied by~\cite[Theorem~1.4]{BG} and modifies its logarithmic slopes near zero and infinity. The endpoint obstructions rule out attainment of the defect infima. At fixed coefficient ratios, monotonicity of the coefficients after multiplication by suitable powers gives compactness even when the limiting coefficients have jumps; Pohozaev identities in the sense of measures exclude a critical limit. The scalar construction also yields an explicit normalized limiting profile. These constructions establish sharpness for the fixed Newton kernel and radial solutions.

Section~\ref{sec:uniform} develops uniform estimates and localized interactions. Section~\ref{sec:pressure} proves the source-layer pressure estimate, beginning with the comparison on parallel intervals. Section~\ref{sec:virial} combines these tools to prove the Liouville results and then illustrates the scope of the hypotheses. Section~\ref{sec:radial} treats the exact scalar threshold, its near-critical construction, and the effect of coefficient ratios. Section~\ref{sec:systems} proves the necessary existence range and the sharp defect and coefficient-ratio results for weighted Newton systems.

\section{Uniform estimates and localized interactions}\label{sec:uniform}

The estimates in this section prepare the common interaction measure used in the pressure and virial arguments. We first record the bounds preserved by centered dilations and then control the errors introduced by localization.

\subsection{Dilation-invariant estimates}

Write $B_R=\{x:|x|<R\}$ and set
\begin{equation*}
f=bu^q,\qquad g=av^p,\qquad u=K*g,\qquad v=K*f.
\end{equation*}
Throughout Sections~\ref{sec:uniform}--\ref{sec:pressure}, we assume only \eqref{eq:parameters}, \eqref{eq:kernel}, and \eqref{eq:coeff}. In particular, $0<\rho<n$ is allowed. The pair $(u,v)$ is a positive continuous solution of \eqref{eq:system} with finite potentials. Here and below $K(x)$ denotes $K(|x|)$. Unless indicated otherwise,
\[
C=C(n,\rho,p,q,\alpha,\beta,c_0,C_0,L,k_*).
\]
It is independent of the particular kernel, coefficients, and solution. A subscript records additional dependence. Constants denoted by $C_{u,v}$ are used only to justify convergence for a fixed solution. They do not occur in the uniform estimates.

\begin{lemma}\label{lem:smallproduct}
The existence of a solution of \eqref{eq:system} implies $pq>1$.
\end{lemma}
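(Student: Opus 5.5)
The plan is to derive lower bounds for $u$ and $v$ at infinity by iterating the integral equations, and to show that when $pq\le1$ these bounds eventually make one of the potentials infinite at a point.

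\textbf{Base step.} Since $u,v$ are positive and continuous, $\min_{B_1}u>0$ and $\min_{B_1}v>0$. The coefficients satisfy $a,b\ge c_0|y|^{\alpha},c_0|y|^{\beta}$ there, and $\alpha,\beta>-\rho>-n$, so $\int_{B_1}a\,v^p>0$ and $\int_{B_1}b\,u^q>0$. For $|x|\ge2$ and $|y|\le1$ we have $|x-y|\le 2|x|$. Because $K$ is nonincreasing with $K(r)\ge c_0 r^{\rho-n}$, this gives
\[
u(x)\ge c\,|x|^{\rho-n},\qquad v(x)\ge c\,|x|^{\rho-n}\qquad(|x|\ge2),
\]
for some $c=c_{u,v}>0$.

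\textbf{Iteration step.} Suppose $v(y)\ge c_k|y|^{-\tau_k}$ for $|y|\ge2$. Fix $x$ and restrict the integral to the annulus $\{2|x|\le|y|\le4|x|\}$, where $|x-y|\le 5|x|$. Then
\[
u(x)\ge c\,c_k^p\,|x|^{\rho-n}\int_{2|x|}^{4|x|}r^{\alpha-p\tau_k+n-1}\,dr\asymp |x|^{\rho+\alpha-p\tau_k}.
\]
This yields the recursion $\sigma_{k+1}=p\tau_k-\rho-\alpha$, and symmetrically $\tau_{k+1}=q\sigma_k-\rho-\beta$. A two-step composition gives $\tau\mapsto pq\,\tau-(\rho+\beta)-q(\rho+\alpha)$.

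\textbf{Conclusion when $pq\le1$.} Start from $\tau_0=n-\rho$. Since $\rho+\alpha>0$, $\rho+\beta>0$ and $pq\le1$, each two-step update lowers the exponent by at least the fixed positive amount $(\rho+\beta)+q(\rho+\alpha)$, up to a term $(pq-1)\tau$ that is nonpositive whenever $\tau\ge0$. Hence, after finitely many steps, one of the decay exponents becomes at most some threshold $\theta$. The threshold is chosen so that the next integral diverges: from $u(y)\ge c|y|^{-\sigma}$ we get $\int_{|y|>2}K(|y|)\,b(y)\,u(y)^q\,dy\gtrsim\int^\infty r^{\rho-n+\beta-q\sigma+n-1}\,dr$, and this is infinite once $q\sigma\le\rho+\beta$. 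The same holds with the roles of $u$ and $v$ exchanged.

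To bound $K(|x-y|)$ from below at far points, I use $|x-y|\le 2|y|$ for $|y|\ge|x|$, so that evaluating at $x=0$ (or at any fixed point) suffices. Thus $v(0)=\infty$ or $u(0)=\infty$, contradicting the assumption of finite potentials.

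\textbf{Main obstacle.} The delicate part is only the bookkeeping when an exponent becomes negative mid-iteration, i.e. $u$ is bounded below by a growing power. I handle this by stopping as soon as the divergence criterion $q\sigma_k\le\rho+\beta$ or $p\tau_k\le\rho+\alpha$ holds. Before that point the criterion fails, so the exponents stay strictly positive, and the annulus integral is a genuine power of $|x|$ with constants depending on $k$. Since the number of steps is finite, this $k$-dependence is harmless.

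In the borderline case $pq=1$, the decrement per two steps is exactly the constant $(\rho+\beta)+q(\rho+\alpha)>0$, so finiteness of the step count is immediate.
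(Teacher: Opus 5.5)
Your proof is correct, but it takes a different route from the paper's.

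\textbf{The paper's argument.} The paper works entirely inside balls. Set $m_u(R)=\min_{\overline B_R}u$ and $m_v(R)=\min_{\overline B_R}v$. The bound $K(x-y)\ge cR^{\rho-n}$ on $B_R\times B_R$ gives $m_u(R)\ge cR^{\rho+\alpha}m_v(R)^p$ and $m_v(R)\ge cR^{\rho+\beta}m_u(R)^q$. Combining these yields $m_u(R)^{1-pq}\ge cR^{\rho+\alpha+p(\rho+\beta)}$. Since $m_u(R)\le u(0)$, the left side stays bounded when $pq\le1$, while the right side tends to infinity. This is a one-shot scaling argument with no iteration and no far-field decay bounds. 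The contradiction comes from the growth of a lower bound for $\min_{\overline B_R}u$ against the fixed value $u(0)$.

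\textbf{Your argument.} You instead derive the contradiction from divergence of the far-field part of the potential at the origin. This costs a finite induction with $k$-dependent constants, but it buys more. For $pq>1$ the same recursion reads $\sigma_{k+2}-\mu=pq(\sigma_k-\mu)$ and $\tau_{k+2}-\nu=pq(\tau_k-\nu)$, with $\mu,\nu$ the paper's scaling exponents. Starting from $\sigma_0=\tau_0=n-\rho$, your bootstrap therefore also excludes $\max\{\mu,\nu\}>n-\rho$. That is the non-strict part of \eqref{eq:necessaryrange}, which the paper obtains separately from the mass bound \eqref{eq:mass}.

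\textbf{Two minor points.} First, the ``main obstacle'' you flag is not really one. The integral of $|y|^{\alpha-p\tau_k}$ over $2|x|\le|y|\le4|x|$ is comparable to $|x|^{n+\alpha-p\tau_k}$ whatever the sign of the exponent. So the stopping rule is what concludes the argument, not what keeps the annulus estimate valid. Second, monotonicity of $K$ is not needed: $K(r)\ge c_0r^{\rho-n}$ with $\rho<n$ already gives $K(|x-y|)\ge c_0(5|x|)^{\rho-n}$ on that annulus.
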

\begin{proof}
For $x,y\in B_R$, \eqref{eq:kernel} gives $K(x-y)\ge cR^{\rho-n}$. Let
$m_u(R)=\min_{\overline B_R}u$ and $m_v(R)=\min_{\overline B_R}v$. Both numbers are positive. Since $\alpha,\beta>-\rho>-n$, integration of the weights gives
\begin{equation*}
m_u(R)\ge cR^{\rho+\alpha}m_v(R)^p,\qquad
m_v(R)\ge cR^{\rho+\beta}m_u(R)^q.
\end{equation*}
Consequently
\begin{equation*}
m_u(R)^{1-pq}\ge cR^{\rho+\alpha+p(\rho+\beta)}.
\end{equation*}
The exponent on the right is positive. If $pq<1$, the left side is at most $u(0)^{1-pq}$. If $pq=1$, it is one. Letting $R\to\infty$ gives a contradiction in both cases.
\end{proof}

We henceforth assume $pq>1$ unless stated otherwise. The weights are denoted by $\alpha,\beta$. We reserve $\mu,\nu$ for the scaling exponents:
\begin{equation*}
\mu=\frac{\rho+\alpha+p(\rho+\beta)}{pq-1},\qquad
\nu=\frac{\rho+\beta+q(\rho+\alpha)}{pq-1}.
\end{equation*}
Then $\mu,\nu>0$ and
\begin{equation}\label{eq:scaleidentities}
p\nu=\mu+\rho+\alpha,\qquad q\mu=\nu+\rho+\beta.
\end{equation}
For $R>0$, put
\begin{align}
&u_R(x)=R^\mu u(Rx),\quad v_R(x)=R^\nu v(Rx),\notag\\
&K_R(r)=R^{n-\rho}K(Rr),\quad
a_{0,R}(x)=a_0(Rx),\quad b_{0,R}(x)=b_0(Rx).
\label{eq:dilation}
\end{align}
By \eqref{eq:scaleidentities} and a change of variables,
\[
\begin{aligned}
u_R(x)&=\int_{\mathbb R^n}K_R(|x-y|)|y|^{\alpha}a_{0,R}(y)v_R(y)^p\,dy,\\
v_R(x)&=\int_{\mathbb R^n}K_R(|x-y|)|y|^{\beta}b_{0,R}(y)u_R(y)^q\,dy.
\end{aligned}
\]
The kernel satisfies $k_R(r)=k(Rr)$. Also
\[
\begin{aligned}
|x|\bigl(|\nabla\log a_{0,R}(x)|+|\nabla\log b_{0,R}(x)|\bigr)&\le L,\\
x\cdot\nabla\log a_{0,R}(x)&=(Rx)\cdot\nabla\log a_0(Rx).
\end{aligned}
\]
The identity also holds for $b_0$. Hence \eqref{eq:kernel}--\eqref{eq:radialdefect} hold with the same constants. The sources satisfy
\begin{equation}\label{eq:scaledsource}
f_R(x)=R^{\nu+\rho}f(Rx),\qquad
g_R(x)=R^{\mu+\rho}g(Rx).
\end{equation}

\begin{proposition}\label{prop:mass}
Every solution satisfies
\begin{equation}\label{eq:mass}
\int_{B_R}f\le CR^{n-\rho-\nu},\qquad
\int_{B_R}g\le CR^{n-\rho-\mu}\qquad(R>0).
\end{equation}
Moreover,
\begin{align}
&\int_{B_4}(f+g+u+v)\le C,\label{eq:localL1}\\
&\int_{|x-y|\ge h}K(x-y)(f(y)+g(y))\,dy
\le Ch^{\rho-n},\quad x\in B_2,\quad0<h\le1.
\label{eq:tail}
\end{align}
\end{proposition}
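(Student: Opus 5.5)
The plan is to prove the mass bound at the unit scale with a constant depending only on the structural data, and then transport it to every $R$ by the centered dilations \eqref{eq:dilation}. The dilated pair $(u_R,v_R)$ solves a system of the same type with the same constants. The dilated sources obey \eqref{eq:scaledsource}, so
\[
\int_{B_R}f=R^{n-\rho-\nu}\int_{B_1}f_R,\qquad \int_{B_R}g=R^{n-\rho-\mu}\int_{B_1}g_R .
\]
Hence \eqref{eq:mass} reduces to the uniform bound $\int_{B_1}(f+g)\le C$ for every solution in the structural class. The same reduction shows that \eqref{eq:localL1} and \eqref{eq:tail} need only be proved with constants depending on the structural data.

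For the unit-scale bound, I would follow the argument of Lemma~\ref{lem:smallproduct}, but with integrals in place of minima. Set $F=\int_{B_2}f$ and $G=\int_{B_2}g$. For $x\in B_2$ and $y\in B_2$ we have $K(x-y)\ge c$, so $u(x)\ge cG$ and $v(x)\ge cF$ on $B_2$. Feeding these lower bounds back into the sources gives
\[
f=|x|^\beta b_0u^q\ge c|x|^\beta G^q,\qquad g\ge c|x|^\alpha F^p\quad\hbox{on } B_2 .
\]
Because $\alpha,\beta>-n$, integration yields $F\ge cG^q$ and $G\ge cF^p$. Combining the two, $F\ge cF^{pq}$. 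Since $pq>1$, this forces $F\le C$, and then $G\le C$ as well. This bounds $\int_{B_1}(f+g)$ (indeed $\int_{B_2}$) uniformly, which proves \eqref{eq:mass} after rescaling.

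For \eqref{eq:localL1}, the source part follows from \eqref{eq:mass} with $R=4$. For $u$ I would write $u=K*g$ and split $g=g\chi_{B_8}+g\chi_{B_8^c}$. The near part has $\int_{B_4}K*(g\chi_{B_8})\le C\int_{B_8}g\le C$, since $K\in L^1_{\rm loc}$ with a uniform bound. For the far part, $|x-y|\asymp|y|$ when $x\in B_4$ and $|y|\ge8$. Decomposing into dyadic annuli and using \eqref{eq:mass} gives
\[
\sum_j 2^{j(\rho-n)}2^{j(n-\rho-\mu)}=\sum_j 2^{-j\mu}<\infty ,
\]
because $\mu>0$. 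The function $v$ is handled in the same way.

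The tail estimate \eqref{eq:tail} follows from the same dyadic decomposition. For $x\in B_2$, the region $h\le|x-y|\le 8$ contributes at most $Ch^{\rho-n}\int_{B_{10}}(f+g)\le Ch^{\rho-n}$. The region $|y|\ge8$ contributes at most $C\le Ch^{\rho-n}$ because $h\le1$.

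The main point to check is the step $F\ge cF^{pq}$. It needs $F<\infty$, which I would take from finiteness of the potentials: $u(0)<\infty$ gives $\int K(y)g(y)\,dy<\infty$, so $g\in L^1_{\rm loc}$. I also have to verify that every constant depends only on $c_0,C_0,n,\rho,p,q,\alpha,\beta$, so that the dilation step is legitimate. The hypothesis $pq>1$ itself comes from Lemma~\ref{lem:smallproduct}.
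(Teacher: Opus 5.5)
Your proposal is correct and follows the paper's proof essentially step for step. The unit-scale bound comes from $F\ge cF^{pq}$ (the paper works on $B_1$ rather than $B_2$) and is transported by the dilations \eqref{eq:scaledsource}, and \eqref{eq:localL1} and \eqref{eq:tail} come from the near/far split with dyadic annuli. One small slip: in the tail estimate your two regions omit the set where $|x-y|>8$ but $|y|<8$. This is harmless, since that piece is at most $C\int_{B_8}(f+g)$; alternatively, split by $y\in B_8$ versus $|y|\ge8$, as the paper does.
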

\begin{proof}
Let $M_f=\int_{B_1}f$ and $M_g=\int_{B_1}g$. The lower kernel bound yields $u\ge cM_g$ and $v\ge cM_f$ on $B_1$. Therefore
\begin{equation*}
M_f\ge cM_g^q,\qquad M_g\ge cM_f^p.
\end{equation*}
It follows that $M_f\ge cM_f^{pq}$. Since $M_f>0$ and $pq>1$, $M_f\le C$. The second mass has the same bound. Apply this argument to \eqref{eq:dilation}. Formula \eqref{eq:scaledsource} then proves \eqref{eq:mass}.

For $x\in B_4$ and $j\ge3$, the contribution of $2^j\le|y|<2^{j+1}$ to $K*f(x)$ is at most
\begin{equation*}
C2^{j(\rho-n)}\int_{B_{2^{j+1}}}f\le C2^{-j\nu}.
\end{equation*}
The analogous bound for $K*g$ is $C2^{-j\mu}$. Both series converge. The exterior potentials are thus uniformly bounded on $B_4$. For the interior parts, Tonelli's theorem and $\rho>0$ give
\begin{equation*}
\int_{B_4}\int_{B_8}K(x-y)f(y)\,dy\,dx
\le C\left(\int_{B_{12}}|z|^{\rho-n}\,dz\right)\int_{B_8}f\le C.
\end{equation*}
The same calculation applies to $g$. This proves \eqref{eq:localL1}. On $|x-y|\ge h$, the part with $y\in B_8$ is at most $Ch^{\rho-n}\int_{B_8}(f+g)$. The exterior part is bounded as above. Since $h^{\rho-n}\ge1$, \eqref{eq:tail} follows.
\end{proof}

Set
\begin{equation*}
\sigma=\frac1{q+1},\qquad \tau=\frac1{p+1}.
\end{equation*}
For the gap $\gamma_0$ defined in the introduction, \eqref{eq:scaleidentities} gives
\begin{equation}\label{eq:gapscale}
\gamma_0=(1-\sigma-\tau)(\mu+\nu+\rho-n),\qquad
1-\sigma-\tau=\frac{pq-1}{(p+1)(q+1)}>0.
\end{equation}
The local energy satisfies
\begin{equation}\label{eq:energyscale}
\int_{B_1}(f_Ru_R+g_Rv_R)
=R^{\mu+\nu+\rho-n}\int_{B_R}(fu+gv).
\end{equation}

\subsection{Localized interaction estimates}\label{sec:localization}

Define the nonnegative measure
\begin{equation}\label{eq:pi}
d\pi(x,y)=f(x)g(y)K(x-y)\,dx\,dy.
\end{equation}
Tonelli's theorem gives the marginals $fu\,dx$ and $gv\,dy$. Since $u,v$ are locally bounded and $\alpha,\beta>-\rho>-n$,
\[
fu=bu^{q+1},\qquad gv=av^{p+1}\qquad\hbox{belong to }L^1_{\mathrm{loc}}(\mathbb R^n).
\]
The total mass of $\pi$ need not be finite.

Choose the following fixed cutoff:
\begin{align*}
&d(x)=\min\{1,(2-|x|)_+\},\quad
m=n+3,\quad j=n(m+1),\quad M=j+2n+4,\\
&\eta(x)=\bigl[\bigl(1-(|x|-1)_+^3\bigr)_+\bigr]^M,
\quad w(x)=-x\cdot\nabla\eta(x),\quad X(x)=x\eta(x).
\end{align*}
Then $\eta\in C_c^2$, $\eta=1$ on $B_1$, and $w\ge0$. Differentiation gives
\begin{align}
&d^M\le\eta\le Cd^M,\qquad |\nabla\eta|+w\le Cd^{M-1},\notag\\
&|\nabla^2\eta|+|\nabla w|+|\nabla^2X|\le Cd^{M-2},
\qquad dw\le C\eta.
\label{eq:cutoffbounds}
\end{align}
For example, on $1<|x|<2$, $s=|x|-1$ gives $1-s^3=d(1+s+s^2)$. This proves the first comparison. The other bounds follow by one or two differentiations.

For $x\ne0$ set $\omega_x=x/|x|$. For $x\ne y$ define
\begin{equation*}
D_e(x,y)=\frac{((x-y)\cdot e)^2}{|x-y|^2},\quad
D(x,y)=D_{\omega_x}(x,y),\quad
\Phi(x,y)=\frac{(X(x)-X(y))\cdot(x-y)}{|x-y|^2}.
\end{equation*}
Values on null sets are immaterial. We use
\begin{align*}
E_f&=\int\eta fu,&E_g&=\int\eta gv,&E&=E_f+E_g,\\
W_f&=\int wfu,&W_g&=\int wgv.&&
\end{align*}
These quantities are finite for a fixed solution. No uniform energy bound has yet been used.

\begin{lemma}\label{lem:localization}
For each $\epsilon>0$ there is a structural constant $C_\epsilon$ such that
\begin{equation}\label{eq:localization}
|E_f-E_g|+\iint|\Phi-\eta(x)+w(x)D|\,d\pi
\le\epsilon E+C_\epsilon.
\end{equation}
The same conclusion holds with $k(|x-y|)$ multiplying the second integrand.
\end{lemma}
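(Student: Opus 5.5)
The plan is to express every quantity in \eqref{eq:localization} as an integral against $\pi$ and to split $\pi$ into a near-diagonal part and an off-diagonal part. Because $\pi$ has marginals $fu\,dx$ and $gv\,dy$, we have $E_f=\iint\eta(x)\,d\pi$ and $E_g=\iint\eta(y)\,d\pi$. Hence
\[
|E_f-E_g|\le\iint|\eta(x)-\eta(y)|\,d\pi .
\]
It therefore suffices to bound
\[
\iint\bigl(|\eta(x)-\eta(y)|+|\Phi-\eta(x)+w(x)D|\bigr)\,d\pi .
\]
The extra factor $k(|x-y|)$ in the variant statement is harmless, since $0\le k\le k_*$ by \eqref{eq:kernel}.

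\emph{Pointwise bound on the integrand.} First expand $X(y)$ to first order about $x$. Since $DX(x)=\eta(x)I+x\otimes\nabla\eta(x)$ and $\nabla\eta(x)=-w(x)x/|x|^2$ ($\eta$ is radial), the linear term gives exactly
\[
\frac{(DX(x)(x-y))\cdot(x-y)}{|x-y|^2}
=\eta(x)-w(x)\frac{(x\cdot(x-y))^2}{|x|^2|x-y|^2}=\eta(x)-w(x)D(x,y).
\]
The Taylor remainder is bounded by $|x-y|\sup_{[x,y]}|\nabla^2X|$. Similarly $|\eta(x)-\eta(y)|\le|x-y|\sup_{[x,y]}|\nabla\eta|$. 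Let $d_\sharp=\max_{z\in[x,y]}d(z)$. By \eqref{eq:cutoffbounds} both errors are at most $C|x-y|\,d_\sharp^{M-2}$. They are also trivially at most $C\,d_\sharp^{M-1}$, because $X$ is Lipschitz, $|\Phi|\le\mathrm{Lip}(X)$, and all terms vanish when $x,y\notin B_2$. So the integrand is bounded by $C\min\{1,|x-y|\}\,d_\sharp^{M-2}$, and it is zero unless the segment meets $B_2$.

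\emph{Near-diagonal region $|x-y|<h\,d_\sharp^2$, with $h$ small.} Here $d(x)$ and $d(y)$ are comparable to $d_\sharp$, since $d$ is $1$-Lipschitz and $h d_\sharp^2\le h d_\sharp$. Hence the error is at most
\[
Ch\,d_\sharp^{M}\le Ch\bigl(\eta(x)+\eta(y)\bigr),
\]
using $d^M\le\eta$. Integrating against $\pi$ gives at most $ChE$. Choosing $h=h(\epsilon)$ makes this at most $\epsilon E$.

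\emph{Off-diagonal region $|x-y|\ge h\,d_\sharp^2$.} Here $K(x-y)\le C h^{\rho-n}d_\sharp^{2(\rho-n)}$. Since $M-2-2(n-\rho)\ge0$ (this is why $M$ was chosen large), the error times the kernel is bounded by $C_h$ whenever both points lie in $B_4$. The contribution is then at most $C_h\int_{B_4}f\int_{B_4}g\le C_\epsilon$ by \eqref{eq:localL1}. If one point, say $y$, lies outside $B_4$, then $x\in B_2$ and $|x-y|\ge2$. The integral becomes $\int_{B_2}f(x)\int_{|x-y|\ge1}K(x-y)g(y)\,dy\,dx$, which is bounded by \eqref{eq:tail} and \eqref{eq:localL1}. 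All constants are structural, because they come only from Proposition~\ref{prop:mass} and \eqref{eq:cutoffbounds}.

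The main obstacle is the near-boundary layer where $d$ is small. There the naive remainder $|x-y|d^{M-2}$ is not dominated by $\eta\sim d^M$. The adaptive threshold $h d_\sharp^2$ is meant to resolve this: below it the two lost powers of $d$ are absorbed, and above it the kernel singularity costs only a bounded power $d^{2(\rho-n)}$, which the large exponent $M$ absorbs. I expect the delicate point to be checking that $d(x)$ and $d(y)$ are genuinely comparable across the segment in the first region, so that $\eta(x)$ and $\eta(y)$ both control $d_\sharp^M$.
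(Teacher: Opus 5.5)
Your proposal is correct and follows the paper's argument in all essentials. You write $E_f-E_g=\iint(\eta(x)-\eta(y))\,d\pi$, split $\pi$ at a $d$-adapted distance from the diagonal, Taylor-expand $X$ (using $\nabla X=\eta I-w\,\omega_x\otimes\omega_x$) on near pairs, and let the large exponent $M$ absorb the kernel singularity on far pairs. The only differences are technical: your threshold $h\,d_\sharp^2$ uses the maximum of $d$ along the segment rather than the paper's endpoint scale $\delta d(z)^m$, and you bound far pairs in $B_4\times B_4$ by $C_h\int_{B_4}f\int_{B_4}g$ instead of applying \eqref{eq:tail} at scale $\ell(z)$; both variants go through.
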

\begin{proof}
Fix $0<\delta<1/4$ and set $\ell(z)=\delta d(z)^m$ when $d(z)>0$. A pair is close if $|x-y|\le\ell(z)$ for at least one endpoint $z\in\{x,y\}\cap B_2$. Otherwise it is far. For a close pair and $\xi\in[x,y]$, the Lipschitz bound for $d$ gives
\[
(1-\delta)d(z)\le d(\xi)\le(1+\delta)d(z).
\]
Also
\begin{equation*}
\nabla X(x)=\eta(x)I-w(x)\omega_x\otimes\omega_x.
\end{equation*}
The second term is zero on $B_1$. Taylor's formula and \eqref{eq:cutoffbounds} imply
\begin{align*}
|\Phi-\eta(x)+w(x)D|&\le C\delta d(z)^{M-2+m},\\
|\eta(x)-\eta(y)|&\le C\delta d(z)^{M-1+m}.
\end{align*}
Both right sides are at most $C\delta(\eta(x)+\eta(y))$. Their integrals are at most $C\delta E$.

On far pairs, $|\Phi|\le(|X(x)|+|X(y)|)/|x-y|$. Thus the sum of the two errors is at most
\begin{equation*}
C\sum_{z\in\{x,y\},\ d(z)>0}
\left(d(z)^{M-1}+\frac{d(z)^M}{\ell(z)}\right).
\end{equation*}
For the $x$ terms, \eqref{eq:tail} gives
\begin{equation*}
C\int_{B_2}f(x)\left[
\delta^{\rho-n}d(x)^{M-1-m(n-\rho)}
+\delta^{\rho-n-1}d(x)^{M-m(n+1-\rho)}\right]dx.
\end{equation*}
The powers of $d$ are positive for every $\rho>0$, since
\begin{equation*}
\begin{aligned}
M-1-m(n-\rho)&=3n+3+m\rho>0,\\
M-m(n+1-\rho)&=2n+1+m\rho>0.
\end{aligned}
\end{equation*}
The last integral is bounded by $C_\delta$ using \eqref{eq:localL1}. The $y$ terms are identical with $g$ in place of $f$. Finally,
\begin{equation*}
E_f-E_g=\iint(\eta(x)-\eta(y))\,d\pi.
\end{equation*}
The cutoff marginals $E_f$ and $E_g$ are finite, so this subtraction is legitimate. Choose $\delta$ so that $C\delta\le\epsilon$. Since $0\le k\le k_*$, the last assertion follows by the same estimates.
\end{proof}

\section{Source-layer pressure estimates}\label{sec:pressure}

The next estimate is the main localization input. It controls the boundary pressure by the same interaction term that appears, with opposite sign, in the virial identity. We first state it, then establish the interval comparison and the source-layer bounds used in its proof.

\begin{proposition}[Uniform pressure estimate]\label{prop:pressure}
Let $n\ge2$, $0<\rho<n$, $p,q>0$, $pq>1$, and $\alpha,\beta>-\rho$. Assume \eqref{eq:kernel} and \eqref{eq:coeff}. Use the cutoff, interaction measure, and energies of Subsection~\ref{sec:localization}, and put $\sigma=(q+1)^{-1}$ and $\tau=(p+1)^{-1}$. For every $\epsilon>0$ there is
\[
C_\epsilon=C_\epsilon(n,\rho,p,q,\alpha,\beta,c_0,C_0,L,k_*)<\infty
\]
such that every positive continuous solution of \eqref{eq:system} with finite potentials satisfies
\begin{equation}\label{eq:pressure}
\sigma W_f+\tau W_g
\le\iint k(|x-y|)w(x)D(x,y)\,d\pi+\epsilon E+C_\epsilon.
\end{equation}
The constant is independent of $K,a_0,b_0,u,v$. The same constant applies to every centered dilation \eqref{eq:dilation}. Neither \eqref{eq:radialdefect} nor a positive effective gap is required.
\end{proposition}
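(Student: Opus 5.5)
The approach is to derive \eqref{eq:pressure} as a one-directional virial identity. On each line parallel to a fixed direction $e$, the terms $\sigma fu=b_0|x|^\beta u^{q+1}/(q+1)$ and $\tau gv$ are differentiated along the line and integrated by parts against a weight whose derivative reproduces $w$. This produces the interaction terms $f\,\partial_e u$ and $g\,\partial_e v$. By \eqref{eq:kernel}, $\partial_e u(x)=-\int k(|x-y|)K(x-y)\frac{e\cdot(x-y)}{|x-y|^2}g(y)\,dy$, and similarly for $v$. After symmetrizing the two contributions over $\pi$, the kernel factor becomes $k\,(\psi(x)-\psi(y))\,e\cdot(x-y)/|x-y|^2$. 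For close pairs this is approximately $k\,w\,D_e$.

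\textbf{Cap decomposition.} Cover the annulus $1\le|x|\le2$, where $w$ lives, by finitely many directional caps $\{\omega_x\approx e_i\}$ of angular width $\theta$. Take a partition of unity $\chi_i$ on the sphere; the number of pieces depends only on $\theta$ and $n$. Within cap $i$, replace $\omega_x$ by $e_i$. This costs $C\theta$ in $D$ and in $w$, which is absorbed into $\epsilon E$ once $\theta$ is chosen small.

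\textbf{Line weight.} On each line $x_0+te_i$, define $\psi_i(x)=\int_{-\infty}^{t}\chi_i w\,dt'$, or a variant normalized so that $e_i\cdot\nabla\psi_i=\chi_i w$ up to a $C\theta$ error. Integrating $\partial_{e_i}[\sigma b u^{q+1}]$ against $\psi_i$ yields $\sigma\int\chi_iwfu$. The right-hand side consists of $\int\psi_i f\,\partial_{e_i}u$ plus a coefficient term $\sigma\int\psi_i(\partial_{e_i}\log b)\,fu$. The latter is only bounded by $L\int\psi_i fu$, which is not small.

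\textbf{Source layers.} To handle the coefficient term, the weight is not spread over the whole annulus. Slice the radial range into layers of thickness $h$, and on each parallel interval (of length $\sim h$) build a localized weight $\psi_{i,\ell}$ with $\|\psi_{i,\ell}\|_\infty\le Ch$. Summing over layers reproduces $\chi_iw$, while the coefficient error becomes $\le CLh\,E\le\epsilon E$. This is the ``comparison on parallel intervals'': the interval weights satisfy $\psi_{i,\ell}(x)-\psi_{i,\ell}(y)\approx \chi_i w\,e_i\cdot(x-y)$ when $|x-y|\ll h$. For $|x-y|\gtrsim h$, the weight difference is bounded by $Ch$, and the interaction is controlled by \eqref{eq:tail} and \eqref{eq:localL1}, giving $C_{h}$. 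Dilation invariance is automatic, since only \eqref{eq:kernel}, \eqref{eq:coeff}, and Proposition~\ref{prop:mass} are used, and these are preserved by \eqref{eq:dilation}.

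\textbf{Main obstacle: the diagonal for $0<\rho\le1$.} There $k K/|x-y|\sim|x-y|^{\rho-n-1}$ need not be integrable, so $\int\psi f\,\partial_e u$ cannot be split into separately absolutely convergent pieces. The plan is to insert a diagonal truncation $|x-y|>\kappa$ in both potentials. One then adds the $f\,\partial_eu$ and $g\,\partial_ev$ contributions first. In the combined integrand, $(\psi(x)-\psi(y))\,e\cdot(x-y)$ gains a factor $|x-y|$ (and $\psi\in C^2$ at scale $h$ gives a second-order remainder). The combined truncated integrand is therefore bounded by $|x-y|^{\rho-n}$, which is integrable, and we let $\kappa\to0$ by dominated convergence for the fixed solution. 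The integration by parts on lines with truncated potentials also produces boundary terms from the cutoff. These must be shown to vanish using continuity of $u,v$ and $L^1_{\mathrm{loc}}$ of $fu,gv$; this is where I expect the real work to be.

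\textbf{Conclusion.} Finally, drop the $k\,(w\,D_{e_i}-w\,D)$ discrepancies using $0\le k\le k_*$. Sum over caps and layers, and choose $\theta$ and $h$ so that all errors are $\le\epsilon E$. This gives \eqref{eq:pressure} with a constant depending only on the structural data.
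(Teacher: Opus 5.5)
\textbf{The step that fails.} Your argument is a virial identity along lines, and it depends on summing slab weights $\psi_{i,\ell}$ with $\|\psi_{i,\ell}\|_\infty\le Ch$ so that $e_i\cdot\nabla\psi=\chi_iw$. This cannot be done.

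If each $\psi_{i,\ell}$ is Lipschitz and supported in its slab, then $\int\partial_{e_i}\psi_{i,\ell}\,dt=0$ on every line. The sum therefore cannot equal the nonnegative function $\chi_iw\not\equiv0$.

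If the weights instead jump back to zero at the slab faces, integration by parts produces face terms $\sigma\sum_\ell J_\ell\,(bu^{q+1})(\text{face}_\ell)$ with $J_\ell\approx h\chi_iw$. These form a Riemann sum for $\sigma\int\chi_iwfu$, so they cancel your main term rather than leave a pressure. Pairs straddling a face with $|x-y|\ll h$ also carry $(\psi(x)-\psi(y))\,e_i\cdot(x-y)/|x-y|^2\sim h/|x-y|$. Such pairs are neither close pairs nor covered by \eqref{eq:tail}.

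Without slabs, the full-line primitive is not compactly supported. Its transverse gradient, of size $\theta^{-1}$, also gives an unsigned close-pair term $k\,(\nabla_\perp\psi_i\cdot z)(e_i\cdot z)/|z|^2$ that is not bounded by $\epsilon E$.

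The obstruction is structural. Summing your pieces only gives the virial identity for the single field $X=\sum_i\psi_ie_i$. Its left side contains $\operatorname{div}X$, and its close-pair integrand is $k\,z\cdot(\operatorname{sym}\nabla X)z/|z|^2$. Matching \eqref{eq:pressure} would require $\operatorname{sym}\nabla X=w\,\omega\otimes\omega$, and no field has this symmetric gradient. A rotation average of such a field would have the form $\varphi(r)\omega$, plus for $n=2$ a rotational part contributing only off-diagonal entries. The tangential part $\varphi r^{-1}(I-\omega\otimes\omega)$ then forces $\varphi\equiv0$. So \eqref{eq:pressure} is a one-sided inequality, not a rearranged virial identity.

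\textbf{The missing idea.} The layers must be taken in the range of $f$, not in space. The paper writes $\sigma f(x)u(x)=\int_0^{f(x)}[u(x)-(t/b(x))^{1/q}]\,dt$ and lets $I$ be the component of $\{f>t\}$ on $x+\mathbb Re$. Since $f=t$ at both endpoints, $u(x_\pm)=(t/b(x_\pm))^{1/q}$. The endpoint terms that cancelled your main term are now fixed by the level, and what remains is exactly the pressure.

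Coefficient variation enters only through \eqref{eq:endpoint}. It is small because only components with $|I|\le2\delta\inf_Id$ are retained; this is the correct version of your short slabs.

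On each pair of components, the integration by parts uses the affine weight $s-m_I$. The cross term $-d_0h$ from unequal midpoints is not small, but it has a sign. The sign comes from the unimodal density of $h$, symmetric about $d_0$, together with $k\ge0$ (Lemma \ref{lem:interval}). This sign is what turns an identity into the inequality \eqref{eq:pressure}.

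Long components are discarded, and the loss is controlled by the direction-averaged bound $\mathbb P_e\{|I|>\ell\}\le C/(t\ell^n)$ in Lemma \ref{lem:layerloss}. No derivative of $u$ or $v$ is used. Hence the low-order diagonal problem you single out as the main obstacle does not arise in this proposition; it appears only in Lemma \ref{lem:virial}.
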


\subsection{A comparison for parallel intervals}\label{sec:interval}

Fix $e\in\mathbb S^{n-1}$ and distinct $z,z'\in e^\perp$. Write
\begin{equation*}
x=z+se,\quad y=z'+te,\quad h=s-t,\quad
\lambda=|z-z'|>0,\quad R_h=(\lambda^2+h^2)^{1/2}.
\end{equation*}
Let $\mathcal K(s,t)=K(R_h)$ and $\mathcal D(s,t)=h^2/R_h^2$. If $I=(s_-,s_+)$ is bounded, set
\begin{equation*}
\mathcal K_I(t)=\frac{\mathcal K(s_-,t)+\mathcal K(s_+,t)}2.
\end{equation*}
Define $\mathcal K_J(s)$ in the second variable in the same way.

\begin{lemma}\label{lem:interval}
Let $I,J$ be nonempty open intervals, possibly unbounded. Let $c_I,c_J\ge0$. Assume that an interval is bounded whenever its coefficient is positive. Then
\begin{align}
\int_{I\times J}(c_I+c_J)\mathcal K
\le\int_{I\times J}\bigl[&c_I\mathcal K_I+c_J\mathcal K_J\notag\\
&+\{\min(c_I,c_J)k(R_h)\mathcal D+|c_I-c_J|\}\mathcal K\bigr].
\label{eq:interval}
\end{align}
All terms are nonnegative extended integrals. An endpoint term with zero coefficient is defined to be zero.
\end{lemma}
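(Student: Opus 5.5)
The plan is to derive \eqref{eq:interval} from a one-dimensional integration by parts in each variable separately. The kernel is differentiated only along the common direction $e$, and its derivative is expressed through $k$. Since $\partial_sR_h=h/R_h$ and $\partial_tR_h=-h/R_h$, the definition $k(r)=-rK'(r)/K(r)$ gives
\[
\partial_s\mathcal K=-k(R_h)\frac{h}{R_h^2}\mathcal K,\qquad
\partial_t\mathcal K=k(R_h)\frac{h}{R_h^2}\mathcal K .
\]
Suppose $I=(s_-,s_+)$ is bounded with midpoint $m_I$. For fixed $t$, integrating $\partial_s[(s-m_I)\mathcal K]$ over $I$ gives
\[
\int_I\mathcal K\,ds=\int_I\mathcal K_I\,ds+\int_I k(R_h)\frac{(s-m_I)h}{R_h^2}\mathcal K\,ds .
\]
The boundary term is $\tfrac{|I|}2(\mathcal K(s_+,t)+\mathcal K(s_-,t))$, and this is exactly $\int_I\mathcal K_I\,ds$; this is why the anchor is the midpoint. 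The same computation in $t$ over a bounded $J$ with midpoint $m_J$ gives
\[
\int_J\mathcal K\,dt=\int_J\mathcal K_J\,dt-\int_J k(R_h)\frac{(t-m_J)h}{R_h^2}\mathcal K\,dt .
\]
Before integrating in the second variable I would check Tonelli/Fubini. Both $\lambda>0$ and $\mathcal K\le C_0\lambda^{\rho-n}$ hold, so every integrand is bounded on bounded sets. In the unbounded cases the corresponding coefficient vanishes, and the inequality is read for extended nonnegative integrals.

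Next I would split the coefficients. Let $c=\min(c_I,c_J)$. The excess $(c_I-c_J)_++(c_J-c_I)_+=|c_I-c_J|$ multiplying $\int\mathcal K$ on the left is matched verbatim by the term $|c_I-c_J|\mathcal K$ on the right. The leftover endpoint terms $(c_I-c)\mathcal K_I$ and $(c_J-c)\mathcal K_J$ are nonnegative and can be added freely. It therefore suffices to prove
\[
\int_{I\times J}(\mathcal K_I+\mathcal K_J)\,ds\,dt\ \ge\ 2\int_{I\times J}\mathcal K-\int_{I\times J}k\mathcal D\mathcal K
\]
when $c>0$, in which case both intervals are bounded. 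When $c=0$ the claim is trivial, since all terms are nonnegative.

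Adding the two one-variable identities and using $(s-m_I)-(t-m_J)=h-(m_I-m_J)$ gives
\[
2\!\int\mathcal K=\int(\mathcal K_I+\mathcal K_J)+\int k\mathcal D\mathcal K-(m_I-m_J)\int k\frac{h}{R_h^2}\mathcal K .
\]
Integrating $\partial_s\mathcal K$ shows the last integral equals $(m_I-m_J)\int_J[\mathcal K(s_+,t)-\mathcal K(s_-,t)]\,dt$. The target inequality thus carries the full $\int k\mathcal D\mathcal K$, whereas the inequality I need to prove only subtracts one copy. Hence the cross term need not be signed. It suffices to bound it by $\int k\mathcal D\mathcal K$, or else to absorb it by a better distribution of anchors.

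This cross term is the main obstacle. My first attempt would be to choose anchors $a$ in $s$ and $a'$ in $t$ depending on the other variable so as to cancel the cross term. I would then use the monotonicity of $K$, hence of $\mathcal K$ in $|h|$, to show that the resulting boundary terms are dominated by the averages $\mathcal K_I,\mathcal K_J$. The reason to expect this is that moving the anchor toward the endpoint nearer to the other interval only weights the larger endpoint value more. If that fails, I would bound $|m_I-m_J|\,|h|\le$ a combination of $h^2$ and $(s-m_I)^2+(t-m_J)^2$ via Cauchy--Schwarz, and control the latter through the same $\partial_s[(s-m_I)^2\mathcal K]$ identities.
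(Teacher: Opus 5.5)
Your setup coincides with the paper's proof up to one step. The trivial case $\min(c_I,c_J)=0$, the splitting off of $|c_I-c_J|\mathcal K$, and the midpoint integrations by parts leading to
\[
\int_{I\times J}(2\mathcal K-\mathcal K_I-\mathcal K_J)
=\int_{I\times J}k(R_h)\mathcal D\mathcal K
-d_0\int_{I\times J}k(R_h)\frac{h}{R_h^2}\mathcal K,
\qquad d_0=m_I-m_J,
\]
are all correct. The gap is the step right after this identity. You need $\int(2\mathcal K-\mathcal K_I-\mathcal K_J)\le\int k\mathcal D\mathcal K$ with constant exactly one, and the identity already uses up that single copy, so there is no slack. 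The lemma is therefore \emph{equivalent} to the sign condition $d_0\int_{I\times J}k(R_h)hR_h^{-2}\mathcal K\ge0$, contrary to your claim that the cross term need not be signed.

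Bounding the cross term in absolute value by $\int k\mathcal D\mathcal K$ would only give \eqref{eq:interval} with $2\min(c_I,c_J)k\mathcal D$. That factor $2$ destroys the exact cancellation of $\iint kwD\,d\pi$ between Proposition~\ref{prop:pressure} and the virial identity (Remark~\ref{rem:method}). Neither of your fallbacks can repair this. The left side of the identity does not depend on where you anchor the integrations by parts, so anchors depending on the other variable only rewrite the same cross term. A Cauchy--Schwarz splitting of $|d_0h|$ produces either a further positive multiple of $\int k\mathcal D\mathcal K$ or terms in $(s-m_I)^2,(t-m_J)^2$ that have no counterpart on the right of \eqref{eq:interval}.

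The missing idea is a sign argument that uses only $k\ge0$. The paper pushes $ds\,dt$ forward under $(s,t)\mapsto h=s-t$. The image has the trapezoidal density $\varrho(h)=F(|h-d_0|)$ with $F$ nonincreasing, so $\varrho(h)\ge\varrho(-h)$ for $h>0$ when $d_0\ge0$. Since $h\mapsto k(R_h)hR_h^{-2}K(R_h)$ is odd and nonnegative for $h>0$, pairing $h$ with $-h$ gives the sign.

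Your own boundary representation gives an equally short proof. The cross term equals
\[
d_0\int_J[\mathcal K(s_+,t)-\mathcal K(s_-,t)]\,dt=d_0[G(s_+)-G(s_-)],
\]
where $G(\sigma)=\int_J\phi(\sigma-t)\,dt$ and $\phi(h)=K((\lambda^2+h^2)^{1/2})$ is even and nonincreasing in $|h|$ (this is where $k\ge0$ enters). With $L_I=|I|$ and $L_J=|J|$,
\[
G'(\sigma)=\phi\bigl(\sigma-m_J+\tfrac{L_J}{2}\bigr)-\phi\bigl(\sigma-m_J-\tfrac{L_J}{2}\bigr),
\]
so $G$ is symmetric about $m_J$ and nonincreasing in $|\sigma-m_J|$. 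For $d_0\ge0$ one has $|s_--m_J|=|d_0-\tfrac{L_I}{2}|\le d_0+\tfrac{L_I}{2}=|s_+-m_J|$. Hence $G(s_+)\le G(s_-)$ and the cross term is nonpositive, which is exactly what is needed. The case $d_0<0$ is symmetric.
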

\begin{proof}
If at most one coefficient is positive, then $|c_I-c_J|=c_I+c_J$. The inequality follows pointwise. Suppose both coefficients are positive. Both intervals are then bounded. Since $\lambda>0$, every integral in the following calculation is finite.

Let $m_I,m_J$ be the midpoints and $d_0=m_I-m_J$. Integration by parts gives
\begin{equation*}
\int_I(\mathcal K-\mathcal K_I)\,ds
=-\int_I(s-m_I)\partial_s\mathcal K\,ds.
\end{equation*}
Using the analogous identity on $J$, we obtain
\begin{equation}\label{eq:midpoint}
\int_{I\times J}(2\mathcal K-\mathcal K_I-\mathcal K_J)
=\int_{I\times J}k(R_h)\frac{h^2-d_0h}{R_h^2}\mathcal K.
\end{equation}
To determine the sign of the midpoint term, put $L_I=|I|$ and $L_J=|J|$. The image of $ds\,dt$ under $(s,t)\mapsto h$ has density
\begin{equation*}
\varrho(h)=F(|h-d_0|),\quad
F(r)=\max\left\{0,\min\left(L_I,L_J,\frac{L_I+L_J}{2}-r\right)\right\}.
\end{equation*}
The function $F$ is nonincreasing. For $d_0\ge0$ and $h>0$, $\varrho(h)\ge\varrho(-h)$. The inequality is reversed when $d_0\le0$. The function
\begin{equation*}
h\longmapsto k(R_h)\frac{h}{R_h^2}K(R_h)
\end{equation*}
is odd and is nonnegative for $h>0$. Pairing $h$ and $-h$ therefore gives
\begin{equation*}
d_0\int_{I\times J}k(R_h)\frac{h}{R_h^2}\mathcal K\ge0.
\end{equation*}
Thus the right side of \eqref{eq:midpoint} is at most $\int k(R_h)\mathcal D\mathcal K$. Multiply this inequality by $c=\min(c_I,c_J)$. Each remaining single endpoint defect is at most $\int\mathcal K$, because its subtracted endpoint average is nonnegative. Their remaining coefficients sum to $|c_I-c_J|$. Adding the inequalities proves \eqref{eq:interval}.
\end{proof}

The proof uses only $k\ge0$. Neither monotonicity of $k$ nor integrability of $K$ on a complete line is needed. The latter may fail when $\rho\ge n-1$.

\subsection{Directional weights and source layers}
Let $\mathbb E_e$ and $\mathbb P_e$ denote average and probability for normalized surface measure on $\mathbb S^{n-1}$. Choose a smooth nonnegative function $\psi$ on $[0,\infty)$ which equals one near zero and vanishes on $[1,\infty)$. For $0<h\le1$, define
\begin{equation*}
Z(h)=\mathbb E_e\psi(|e-\theta|/h),\qquad \theta\in\mathbb S^{n-1}.
\end{equation*}
Rotational invariance makes $Z$ independent of $\theta$. Chordal polar coordinates give
\begin{equation*}
Z(h)=\frac{|\mathbb S^{n-2}|}{|\mathbb S^{n-1}|}
h^{n-1}\int_0^1\psi(s)s^{n-2}(1-h^2s^2/4)^{(n-3)/2}\,ds.
\end{equation*}
Consequently $ch^{n-1}\le Z(h)\le Ch^{n-1}$ and $|Z'(h)|\le Ch^{n-2}$.

Fix $0<\kappa<1$ and $0<\delta<1/4$. Define
\begin{equation*}
h_e(x)=w(x)\frac{\psi(|e-\omega_x|/(\kappa d(x)))}{Z(\kappa d(x))}
\quad(1<|x|<2),
\end{equation*}
and set $h_e=0$ elsewhere. Then
\begin{equation}\label{eq:capbounds}
\mathbb E_e h_e=w,\quad
0\le h_e\le C_\kappa d^{M-n},\quad
|\nabla h_e|\le C_\kappa d^{M-n-1}.
\end{equation}
To see the gradient estimate, each derivative of the cap profile costs at most $C_\kappa/d$. Differentiation of $Z(\kappa d)^{-1}$ has the same relative cost. The profile is constant near its center. Thus $|e-\omega_x|$ creates no singular derivative there. The weight extends by zero across both boundary spheres. The factor $w$ vanishes quadratically at radius one, and $M-n>1$ at radius two.

\begin{lemma}\label{lem:capmismatch}
Uniformly in $e$,
\begin{equation}\label{eq:capmismatch}
\iint|h_e(x)-h_e(y)|\,d\pi\le C_\kappa\delta E+C_{\kappa,\delta}.
\end{equation}
Also
\begin{equation}\label{eq:capdirection}
\mathbb E_e\iint k(|x-y|)h_e(x)D_e(x,y)\,d\pi
\le\iint k(|x-y|)w(x)D(x,y)\,d\pi+C\kappa E.
\end{equation}
The last constant is independent of $\kappa,\delta$.
\end{lemma}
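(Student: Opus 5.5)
The two estimates are independent, and I would treat both by the close/far splitting already used in Lemma~\ref{lem:localization}. The tools are the cap bounds \eqref{eq:capbounds}, the tail estimate \eqref{eq:tail}, and the local bound \eqref{eq:localL1}.

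\emph{Mismatch \eqref{eq:capmismatch}.} Set $\ell(z)=\delta d(z)^{m}$ as before, and split pairs into close and far, where close means $|x-y|\le\ell(z)$ for an endpoint $z\in\{x,y\}\cap B_2$.

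For close pairs, $d$ is comparable along $[x,y]$ up to a factor $1\pm\delta$. The mean value theorem and \eqref{eq:capbounds} then give
\[
|h_e(x)-h_e(y)|\le C_\kappa\,\ell(z)\,d(z)^{M-n-1}=C_\kappa\delta\, d(z)^{M-n-1+m}.
\]
Since $m=n+3$, the exponent $M-n-1+m$ exceeds $M$, so the right side is at most $C_\kappa\delta\,(\eta(x)+\eta(y))$. Integrating against $\pi$ gives at most $C_\kappa\delta E$, because the marginals of $\pi$ are $fu$ and $gv$.

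For far pairs, I would bound $|h_e(x)-h_e(y)|\le h_e(x)+h_e(y)\le C_\kappa(d(x)^{M-n}+d(y)^{M-n})$. For the $x$ term, integrating in $y$ over $|x-y|\ge\ell(x)$ and using \eqref{eq:tail} gives
\[
C_\kappa\int_{B_2} f\,\delta^{\rho-n}d^{\,M-n-m(n-\rho)}.
\]
The exponent is positive: $M-n-m(n-\rho)=2n+4+m\rho>0$. So by \eqref{eq:localL1} this is at most $C_{\kappa,\delta}$. The $y$ terms are handled the same way with $g$ in place of $f$. The step that needs care is applying \eqref{eq:tail}, which is stated for $h\le1$; this holds since $\ell\le\delta<1$. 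None of these bounds depends on $e$, so the estimate is uniform in $e$.

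\emph{Directional estimate \eqref{eq:capdirection}.} By Tonelli, the left side equals
\[
\iint k\,w(x)\,\mathbb E_e\bigl[\tilde h_e(x)D_e(x,y)\bigr]\,d\pi,\qquad
\tilde h_e=\frac{\psi\bigl(|e-\omega_x|/(\kappa d)\bigr)}{Z(\kappa d)}.
\]
Here $\tilde h_e$ is a probability weight in $e$, since $\mathbb E_e\tilde h_e=1$, and it is supported on $|e-\omega_x|\le\kappa d\le\kappa$. On that support, with $\xi=(x-y)/|x-y|$,
\[
|D_e-D|=\bigl|(\xi\cdot e)^2-(\xi\cdot\omega_x)^2\bigr|\le 2|e-\omega_x|\le2\kappa.
\]
Hence $\mathbb E_e[\tilde h_eD_e]\le D+2\kappa$, and the left side is at most
\[
\iint kwD\,d\pi+2\kappa k_*\iint w(x)\,d\pi.
\]
The main obstacle is that $\iint w(x)\,d\pi=W_f$ is not obviously bounded by $E$, since $w$ is only bounded by $Cd^{M-1}$ while $\eta\ge d^M$. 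I would use the crude bound $w\le C\,\mathbf 1_{B_2}$ instead. Since $\eta\ge d^M$ is bounded below on $B_{2-\theta}$, I would split the region near $|x|=2$ and expect to absorb it as $C\kappa E$ plus a lower-order term. If that split fails, I would fall back on $dw\le C\eta$ and shrink the cap radius to $\kappa d$; the resulting error $\kappa d\,w\le C\kappa\eta$ is then bounded by $C\kappa E$ with $C$ independent of $\kappa$ and $\delta$. This fallback is exactly why the cap radius was chosen proportional to $d$. Indeed, the sharper bound $|D_e-D|\le2\kappa d$ on the support already gives the error $2\kappa k_*\iint d\,w(x)\,d\pi\le C\kappa E$ directly.
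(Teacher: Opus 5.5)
Your proof is correct and follows the paper's argument. For \eqref{eq:capmismatch} you use the same close/far splitting, with the mean value bound $C_\kappa\delta d(z)^{M-n-1+m}\le C_\kappa\delta(\eta(x)+\eta(y))$ on close pairs, and \eqref{eq:tail} together with $M-n-m(n-\rho)>0$ on far pairs; for \eqref{eq:capdirection} you use the support bound $|D_e-D|\le 2\kappa d(x)$, then $\mathbb E_e h_e=w$, $k\le k_*$ and $dw\le C\eta$. Drop the tentative split near $|x|=2$ based on the cruder bound $|D_e-D|\le2\kappa$: $W_f$ is not controlled by $E$, and \eqref{eq:capdirection} allows no additive constant, so the sharper bound you end with is the one the argument (and the paper) needs.
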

\begin{proof}
Use the close and far pairs from Lemma \ref{lem:localization}. On a close segment, \eqref{eq:capbounds} gives
\begin{equation*}
|h_e(x)-h_e(y)|\le C_\kappa\delta d(z)^{M-n-1+m}
\le C_\kappa\delta(\eta(x)+\eta(y)).
\end{equation*}
The close contribution is at most $C_\kappa\delta E$. For a far pair the $x$ contribution is bounded by
\begin{equation*}
C_\kappa\int_{B_2}f(x)d(x)^{M-n}\ell(x)^{\rho-n}\,dx
\le C_{\kappa,\delta},
\end{equation*}
because $M-n-m(n-\rho)=2n+4+m\rho>0$. The $y$ contribution is the same. This proves \eqref{eq:capmismatch}.

On the support of $h_e(x)$, $|e-\omega_x|\le\kappa d(x)$, and hence
$|D_e-D|\le2\kappa d(x)$. Average, use $k\le k_*$ and $\mathbb E_eh_e=w$, and then apply $dw\le C\eta$. This proves \eqref{eq:capdirection}.
\end{proof}

\subsubsection*{Source components and loss estimates}
For $0<t<f(x)$, let $I$ be the component containing $x$ of $\{f>t\}$ on the line $x+\mathbb Re$. Assign
\begin{equation}\label{eq:retained}
\chi_I=\begin{cases}
\inf_I h_e,& I\text{ is bounded and }|I|\le2\delta\inf_I d,\\
0,&\text{otherwise}.
\end{cases}
\end{equation}
Write $\chi_f(e,x,t)=\chi_I$. Define $\chi_g(e,y,t')$ in the same way. In particular,
$0\le\chi_f\le h_e(x)$ and $0\le\chi_g\le h_e(y)$.

To prove joint measurability, set $f(0)=0$. The set $\{(x,t):x\ne0,\ 0<t<f(x)\}$ is open. A forward exit distance exceeds $r\ge0$ if and only if $\{x+se:0\le s\le r\}$ lies in the layer. This condition is open in $(e,x,t)$, since the segment is compact. The backward exit has the same property. Thus the component length and its finite endpoints are measurable. By continuity, each infimum in \eqref{eq:retained} equals the infimum over rational offsets between the exits. This proves measurability of $\chi_f$. The proof for $\chi_g$ is identical.

Define the finite nonnegative losses
\begin{align*}
L_f(e)&=\int u(x)\int_0^{f(x)}(h_e(x)-\chi_f(e,x,t))\,dt\,dx,\\
L_g(e)&=\int v(y)\int_0^{g(y)}(h_e(y)-\chi_g(e,y,t'))\,dt'\,dy.
\end{align*}
Finiteness follows from compact annular support, boundedness of $h_e$, and local finiteness of the energy.

\begin{lemma}\label{lem:layerloss}
For every $P\ge1$,
\begin{equation*}
\mathbb E_e(L_f(e)+L_g(e))
\le(C_\kappa\delta+C_{\kappa,\delta}P^{-1})E+CP.
\end{equation*}
Here $C_\kappa$ is independent of $\delta,P$, and $C_{\kappa,\delta}$ is independent of $P$. The constant $C$ is independent of $\kappa,\delta,P$.
\end{lemma}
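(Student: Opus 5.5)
By symmetry between $f$ and $g$, it suffices to bound $\mathbb E_eL_f(e)$; the bound for $L_g$ is identical. Write the loss layer by layer. For fixed $e$, $x$ and $0<t<f(x)$, the integrand $h_e(x)-\chi_f(e,x,t)$ is nonnegative. I split the layers into three classes according to the component $I$ of $\{f>t\}$ on $x+\mathbb Re$ that contains $x$.

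\emph{Short and bounded components.} If $I$ is bounded with $|I|\le2\delta\inf_I d$, the loss is $h_e(x)-\inf_Ih_e$. The gradient bound in \eqref{eq:capbounds} gives
\[
h_e(x)-\inf_I h_e\le C_\kappa|I|d^{M-n-1}\le C_\kappa\delta d^{M-n}\le C_\kappa\delta\eta(x),
\]
using $d\simeq\inf_Id$ on $I$. Integrating against $u(x)\,dt\,dx$ contributes at most $C_\kappa\delta E_f$.

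\emph{Long or unbounded components.} Otherwise the loss is $h_e(x)\le C_\kappa d(x)^{M-n}$, and $I$ contains a segment through $x$ of length at least $\delta d(x)$ on which $f>t$. I plan to split these layers further by height.

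For low layers, $t\le P$, the contribution is at most $\int_{B_2}u\,\min(f,P)h_e\,dx$. I would bound $u$ on the annulus by $E$-type quantities through Chebyshev: where $u\le P$ use $\int_{B_4} f\le C$ from \eqref{eq:localL1}, and where $u>P$ use $uf\cdot(\cdot)$. The cleaner route is $\min(f,P)u\le P u+ fu\,\mathbf 1_{\{\cdot\}}$, and I expect the $CP$ term to come from $P\int_{B_4}u\le CP$. With the averaging $\mathbb E_eh_e=w\le C\eta/d$, the low part is then bounded by $CP+{}$(a term of the high type).

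For high layers, $t>P$, the long component gives $f>P$ on a segment of length $\ge\delta d(x)$ in direction $e$. Averaging over $e$ is where the directional caps enter: $h_e$ is supported on $|e-\omega_x|\le\kappa d$, with density $\sim(\kappa d)^{1-n}$. The key point is that the superlevel set $\{f>t\}$ containing long segments in many directions forces mass. I would use the lower bound $v(z)\ge c\int_{\text{segment}}K(z-\cdot)f\ge c\,t\,\delta d\,\cdot(\delta d)^{\rho-n}$ for $z$ near the segment. Via the equation $f=bu^q$, this converts into a gain showing that the measure of such $(x,t)$ weighted by $u h_e$ is at most $C_{\kappa,\delta}P^{-1}E$. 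The mechanism is Chebyshev: $\int u\int_P^{f}\mathbf 1\,dt\le\int uf\,\mathbf 1_{f>P}$, together with the fact that for $f>P$ the bounded factor $h_e$ can be absorbed into $C_{\kappa,\delta}\eta/P\cdot f$, giving $C_{\kappa,\delta}P^{-1}\int\eta f u\cdot$(something bounded).

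\emph{Main obstacle.} The high/long layers are the hardest step. The difficulty is getting the $P^{-1}$ factor uniformly in $e$ without using any decay. My first attempt there would be to use the one-dimensional layer-cake estimate along lines, exploiting Fubini in $(e,x)$: the set of $(x,e)$ with a long component of height $>P$ has $\kappa$-cap measure controlled by $\int_{B_2}\mathbf 1_{f>P}$. Then I would combine this with $u\,\mathbf 1_{f>P}\le P^{-1}uf$. Summing the three contributions gives the stated bound.
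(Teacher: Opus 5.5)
There is a genuine gap, and it comes from the powers of $d$. Your short-component estimate ends with $C_\kappa\delta d^{M-n}\le C_\kappa\delta\eta(x)$. This is false: by \eqref{eq:cutoffbounds}, $\eta\asymp d^M$ with $d\le1$, so $d^{M-n}/\eta$ is unbounded near $|x|=2$. This is not mere bookkeeping. On a retained component of length comparable to $\delta d$, the loss $h_e(x)-\inf_Ih_e$ can be of order $\delta h_e(x)$, i.e.\ of order $\delta w\asymp\delta d^{M-1}$ after averaging in $e$. That quantity is controlled by $W_f$, which is exactly what the pressure estimate is meant to bound, but not by $E$.

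The split must therefore be by length rather than by retention. Call $I$ short only if $|I|\le\ell:=\delta d^m$ with $m$ large ($m=n+3$ in the paper). Then \eqref{eq:capbounds} gives $h_e(x)-\inf_Ih_e\le C_\kappa\delta d^{M-n-1+m}\le C_\kappa\delta d^M$, and such an $I$ is automatically retained. Every other component, retained or not, goes to the long case with loss at most $h_e(x)$.

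For the long case you are missing the estimate that actually produces $P^{-1}$, and the mechanisms you list do not supply it. A single segment is Lebesgue-null, so $f>t$ on it gives no lower bound for $v=K*f$. Chebyshev in the level only gives $\int_P^{f}dt\le f$, hence $\int wfu=W_f$ after averaging, with no gain. Absorbing $h_e$ into $C_{\kappa,\delta}\eta f/P$ would need $h_e\lesssim\eta$, which fails for the reason above.

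The missing idea is a bound in the direction variable at fixed $(x,t)$. Let $F$ be the set of directions whose forward segment of length $\ell/2$ lies in $\{f>t\}$. Polar integration about $x$ gives $\int_{B_{\ell/2}(x)}f\ge c\,t\,\ell^n\,\mathbb P_e(F)$, so \eqref{eq:localL1} yields $\mathbb P_e\{|I|>\ell\}\le C/(t\ell^n)$. Multiplying by $\sup_eh_e\le C_\kappa d^{M-n}$ gives $C_{\kappa,\delta}t^{-1}d^{M-n-mn}$. With your constant level threshold $P$, this leaves the weight $d^{M-n-mn}=d^{2n+4}$ on $fu$, which again is not controlled by $E$.

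The levels must instead be split at the $d$-dependent height $t=Pd^{-j}$ with $j=n(m+1)$. Above it, $t^{-1}\le P^{-1}d^j$ turns the bound into $C_{\kappa,\delta}P^{-1}d^M\le C_{\kappa,\delta}P^{-1}\eta$. Integrating in $t$ and against $u$ then gives $C_{\kappa,\delta}P^{-1}E_f$. Below it, the exact normalization $\mathbb E_eh_e=w$ gives $\int_0^{\min(f,Pd^{-j})}\mathbb E_e(h_e-\chi_f)\,dt\le Pd^{-j}w\le CPd^{M-1-j}\le CP$. Then \eqref{eq:localL1} for $u$ produces the term $CP$, with $C$ independent of $\kappa,\delta$.
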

\begin{proof}
Fix $1<|x|<2$ and write $d=d(x)$ and $\ell=\delta d^m$. If $|I|\le\ell$, then
$\inf_I d\ge(1-\delta)d$ and $|I|\le2\delta\inf_I d$. Moreover,
\begin{equation*}
h_e(x)-\inf_Ih_e\le C_\kappa\delta d^{M-n-1+m}
\le C_\kappa\delta d^M.
\end{equation*}
If $|I|>\ell$, then one of its two portions from $x$ contains a segment of length $\ell/2$. Let $F$ be the set of directions whose forward segment has this property. Polar integration centered at $x$ gives
\begin{equation*}
\int_{B_{\ell/2}(x)}f\ge
\frac{|\mathbb S^{n-1}|}{n}t(\ell/2)^n\mathbb P_e(F).
\end{equation*}
The long-component event is contained in $F\cup(-F)$. Since $B_\ell(x)\subset B_4$, \eqref{eq:localL1} implies
\begin{equation*}
\mathbb P_e\{|I|>\ell\}\le\frac{C}{t\ell^n}.
\end{equation*}
For $t\ge Pd^{-j}$, multiply this probability by $\sup_e h_e(x)$. Since $j=n(m+1)$,
\begin{equation*}
\mathbb E_e\bigl[(h_e(x)-\chi_f)\mathbf1_{\{|I|>\ell\}}\bigr]
\le C_{\kappa,\delta}P^{-1}d^{M-n+j-mn}
=C_{\kappa,\delta}P^{-1}d^M.
\end{equation*}
No independence between the cap and the event is used. Integrate this and the short-component estimate over the high levels and against $u(x)\,dx$. The result is at most
$(C_\kappa\delta+C_{\kappa,\delta}P^{-1})E_f$.

For $0<t<\min(f(x),Pd^{-j})$, use exact normalization:
\begin{equation*}
\int_0^{\min(f(x),Pd^{-j})}\mathbb E_e(h_e(x)-\chi_f)\,dt
\le Pd^{-j}w(x)\le CPd^{M-1-j}\le CP.
\end{equation*}
Integrating against $u$ and using \eqref{eq:localL1} gives $CP$. This proves the assertion for $(f,u)$. Replacing $(f,u)$ by $(g,v)$ proves the stated inequality.
\end{proof}

\subsection{Proof of the pressure estimate}
\subsubsection*{Endpoint values and integration of the interval inequality}
If $\chi_I>0$, its endpoints $x_-,x_+$ lie in $1<|x|<2$. Indeed $h_e\ge\chi_I$ on the interval and, by continuity, at each endpoint. Maximality of the component gives
\begin{equation*}
f(x_-)=f(x_+)=t,\qquad
u(x_\pm)=\bigl(t/b(x_\pm)\bigr)^{1/q}.
\end{equation*}
For $x\in I$, $|x_\pm-x|\le2\delta d(x)$. Every point of the segment from $x$ to $x_\pm$ has modulus at least $1/2$. Since $b=|x|^{\beta}b_0$, \eqref{eq:coeff} gives
\[
|\log b(x_\pm)-\log b(x)|
\le4(|\beta|+L)\delta d(x).
\]
Put $T=(t/b(x))^{1/q}$. Then
\[
u(x_\pm)=T\exp\left(\frac{\log b(x)-\log b(x_\pm)}q\right).
\]
Use $|e^s-1|\le e^{|s|}|s|$ and $\delta d(x)\le1/4$. We obtain
\begin{equation}\label{eq:endpoint}
\left|u_I-\bigl(t/b(x)\bigr)^{1/q}\right|
\le C\delta d(x)\bigl(t/b(x)\bigr)^{1/q}
\le C\delta d(x)u(x),
\end{equation}
where $u_I=(u(x_-)+u(x_+))/2$. The constant is structural. The same estimate holds for the $g$-layer endpoint average $v_J$. Zero-weight endpoint products are always defined to be zero.

Fix $e,t,t'$. On two distinct parallel lines apply Lemma \ref{lem:interval} to every pair of layer components. In its right side use $\min(\chi_I,\chi_J)\le h_e(x)$. Sum over the countable components and integrate over the two transverse line variables and the two levels. All terms remain nonnegative. Thus Tonelli's theorem applies before any subtraction. The coincident-line set has measure zero.

The two left terms become
\begin{equation*}
U_f=\int\int_0^{f(x)}\chi_f u(x)\,dt\,dx,\qquad
U_g=\int\int_0^{g(y)}\chi_g v(y)\,dt'\,dy.
\end{equation*}
The endpoint terms become
\begin{equation*}
V_f=\int\int_0^{f(x)}\chi_f u_I\,dt\,dx,\qquad
V_g=\int\int_0^{g(y)}\chi_g v_J\,dt'\,dy.
\end{equation*}
For instance, integration of $\mathcal K_I$ against all $g$-layers recovers the average of $K*g$ at the two endpoints. For a fixed endpoint, the excluded coincident affine line is a Lebesgue null set in $\mathbb R^n$. Thus the recovered average is exactly $u_I$.

Put $H_f(e)=\int h_efu$ and $H_g(e)=\int h_egv$. Both are finite. By \eqref{eq:endpoint},
\begin{equation*}
U_f\le H_f,\quad U_g\le H_g,\quad
V_f\le CH_f,\quad V_g\le CH_g.
\end{equation*}
The directional term is at most $k_*H_f$. Since $|\chi_f-\chi_g|\le h_e(x)+h_e(y)$, the mismatch is at most $H_f+H_g$. All terms are finite, so we may subtract $V_f+V_g$.

Set $S_e=U_f+U_g-V_f-V_g$. The sharper estimate
\begin{align*}
|\chi_f-\chi_g|\le{}&|h_e(x)-h_e(y)|\\
&+(h_e(x)-\chi_f)+(h_e(y)-\chi_g)
\end{align*}
then yields
\begin{equation}\label{eq:integratedinterval}
S_e\le\iint k(|x-y|)h_e(x)D_e\,d\pi
+\iint|h_e(x)-h_e(y)|\,d\pi+L_f(e)+L_g(e).
\end{equation}
No difference of infinite line integrals has been taken.

\subsubsection*{Recovery of the pressure}
The exact layer identity is
\begin{equation*}
\int_0^{f(x)}\left[u(x)-\bigl(t/b(x)\bigr)^{1/q}\right]dt
=\frac{f(x)u(x)}{q+1}=\sigma f(x)u(x).
\end{equation*}
Let $T=(t/b(x))^{1/q}$, so that $0\le T\le u(x)$. Formula \eqref{eq:endpoint} gives
\begin{align*}
h_e(x)(u(x)-T)-\chi_f(u(x)-u_I)
&=(h_e(x)-\chi_f)(u(x)-T)+\chi_f(u_I-T)\\
&\le(h_e(x)-\chi_f)u(x)+C\delta d(x)h_e(x)u(x).
\end{align*}
Integrating in the level and in space, and then adding the second equation, proves
\begin{align}
\sigma H_f(e)+\tau H_g(e)\le{}&S_e+L_f(e)+L_g(e)\notag\\
&+C\delta\int d(x)h_e(x)f(x)u(x)\,dx\notag\\
&+C\delta\int d(y)h_e(y)g(y)v(y)\,dy.
\label{eq:recoverpressure}
\end{align}

\begin{proof}[Proof of Proposition \ref{prop:pressure}]
Combine \eqref{eq:integratedinterval} and \eqref{eq:recoverpressure}. Average over $e$. Lemmas \ref{lem:capmismatch} and \ref{lem:layerloss}, together with $\mathbb E_eh_e=w$ and $dw\le C\eta$, give
\begin{align*}
\sigma W_f+\tau W_g
\le{}&\iint k(|x-y|)w(x)D\,d\pi\\
&+(C\kappa+C_\kappa\delta+C_{\kappa,\delta}P^{-1})E
+C_{\kappa,\delta,P}.
\end{align*}
Given $\epsilon>0$, choose $0<\kappa<1$, then $0<\delta<1/4$, and finally $P\ge1$ so that
\[
C\kappa\le\epsilon/3,\qquad
C_\kappa\delta\le\epsilon/3,\qquad
C_{\kappa,\delta}/P\le\epsilon/3.
\]
These choices depend only on $\epsilon$ and the structural data. Set $C_\epsilon=C_{\kappa,\delta,P}$. Then \eqref{eq:pressure} holds. The proof uses \eqref{eq:localL1}, \eqref{eq:tail}, and \eqref{eq:coeff}, all valid for $0<\rho<n$. It uses no derivative of $u$ or $v$. Since \eqref{eq:dilation} preserves these bounds, $C_\epsilon$ is uniform under centered dilations.
\end{proof}

\section{Virial identity and Liouville theorems}\label{sec:virial}

We now combine the pressure estimate with a localized virial identity. The two potential interactions must be combined before the diagonal cutoff is removed; this order is essential for the low-order kernels. The resulting energy bound, followed by dilation, proves the main theorem.

Throughout this section, $0<\rho<n$. We combine the two truncated interactions before taking the diagonal limit. The assumption $\Gamma>0$ is used only from Proposition \ref{prop:energy} onward.

\subsection{Regularity and the combined virial identity}

\begin{lemma}\label{lem:differentiation}
The functions $u,v$ belong to $C^1(\mathbb R^n\setminus\{0\})$. For $\epsilon>0$, put
\[
K_\epsilon(z)=K(\max\{|z|,\epsilon\}),\qquad
u_\epsilon=K_\epsilon*g,\qquad v_\epsilon=K_\epsilon*f.
\]
On compact subsets of $\mathbb R^n\setminus\{0\}$,
\begin{equation}\label{eq:gradpotential}
\begin{aligned}
\nabla u_\epsilon(x)
&=\int_{|x-y|>\epsilon}\nabla K(x-y)g(y)\,dy
\longrightarrow\nabla u(x),\\
\nabla v_\epsilon(x)
&=\int_{|x-y|>\epsilon}\nabla K(x-y)f(y)\,dy
\longrightarrow\nabla v(x)
\end{aligned}
\end{equation}
uniformly as $\epsilon\downarrow0$. The untruncated integrals are absolutely convergent if $\rho>1$.
\end{lemma}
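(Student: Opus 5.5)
The plan is to treat the truncated potentials first and then pass to the limit, working on a fixed compact set $A\subset\mathbb R^n\setminus\{0\}$ with $\operatorname{dist}(A,0)=r_0>0$. Since $K\in C^1((0,\infty))$ and $K_\epsilon(z)=K(\max\{|z|,\epsilon\})$ is Lipschitz, with $\nabla K_\epsilon(z)=\nabla K(z)\mathbf 1_{\{|z|>\epsilon\}}$ almost everywhere and $|\nabla K_\epsilon(z)|\le k_*K(|z|)/|z|\le C\epsilon^{\rho-n-1}$, differentiation under the integral sign is justified. Near $x$ this uses the uniform bound $C\epsilon^{\rho-n-1}$ together with $g\in L^1_{\mathrm{loc}}$. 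Far away it uses the dyadic tail estimate from the proof of Proposition~\ref{prop:mass}: $|\nabla K(x-y)|\le C|x-y|^{\rho-n-1}$, and the mass bound \eqref{eq:mass} makes the shells $2^j\le|y|<2^{j+1}$ contribute at most $C2^{-j(\mu+1)}$, locally uniformly in $x$. This gives the first formula in \eqref{eq:gradpotential} and shows that $u_\epsilon\in C^1$. Also $u_\epsilon\to u$ locally uniformly, because
\[
0\le u-u_\epsilon\le\int_{|x-y|<\epsilon}K(x-y)g(y)\,dy
\le C\sup_{B_{r_0/2}(x)}g\int_{B_\epsilon}|z|^{\rho-n}\,dz\to0 .
\]
Here $g=av^p$ is bounded near $A$, since $v$ is continuous and $a$ is bounded away from the origin.

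The key step is uniform convergence of $\nabla u_\epsilon$. It then follows that $u\in C^1$ near $A$ and that $\nabla u=\lim\nabla u_\epsilon$. We cannot simply use $g\in L^\infty$, because when $\rho\le1$ the kernel $|z|^{\rho-n-1}$ is not integrable near $0$. Instead we use the regularity of $g$. For $\epsilon<\epsilon'<r_0/4$,
\[
\nabla u_\epsilon(x)-\nabla u_{\epsilon'}(x)=\int_{\epsilon<|z|<\epsilon'}\nabla K(z)\bigl(g(x-z)-g(x)\bigr)\,dz ,
\]
because $\nabla K$ is odd and radial, so its integral over the annulus vanishes. It therefore suffices to show that $g$ is Hölder continuous of some order $\theta>1-\rho$ near $A$. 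Then the integrand is bounded by $C|z|^{\rho-n-1+\theta}$, which is integrable, and the Cauchy criterion gives uniform convergence.

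To obtain this Hölder bound we bootstrap. The factors $a=|x|^\alpha a_0$ and $b$ are $C^1$ away from $0$ by \eqref{eq:coeff}, so it is enough to improve the regularity of $u$ and $v$. A potential $K*h$ with $h$ bounded near $A$ and controlled tails is $C^{0,\theta}$ for every $\theta<\rho$ (for every $\theta<1$ when $\rho>1$). This follows from the standard estimate $|K(x-y)-K(x'-y)|\le C|x-x'|^{\theta}(|x-y|^{\rho-n-\theta}+|x'-y|^{\rho-n-\theta})$, obtained by interpolating the mean value bound on $K$ with the pointwise bound; the far field is again Lipschitz by the tail argument. Hence $v\in C^{0,\theta}$, and so $g=av^p\in C^{0,\min(\theta,p\theta)}$ near $A$.

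The main obstacle is that small $p$ or $q$ erodes the exponent. I would iterate the Hölder potential estimate (a source in $C^{0,\gamma}$ yields a potential in $C^{0,\gamma+\rho}$, or in $C^{1,\cdot}$ once $\gamma+\rho>1$), alternating between $u$ and $v$. Because $u,v$ are positive and bounded below on $A$, the maps $s\mapsto s^p,s^q$ are smooth on the relevant range, so no exponent is lost. The iteration gains $\rho$ at each step and reaches $\theta>1-\rho$ after finitely many steps. Each step is applied on a slightly smaller neighborhood of $A$, and the far parts are always Lipschitz. Finally, when $\rho>1$, the bound $|\nabla K(z)|\le C|z|^{\rho-n-1}$ is locally integrable, and together with the tail bound this gives the absolute convergence of the untruncated integrals directly.
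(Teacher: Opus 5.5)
Your proposal is correct and follows essentially the same route as the paper. You differentiate the capped potentials using $|\nabla K_\epsilon|\le k_*\epsilon^{-1}K$, bootstrap H\"older regularity of the sources on shrinking neighborhoods until the exponent exceeds $1-\rho$, and use the oddness of $\nabla K$ to subtract $F(x)$. There is no loss from $s\mapsto s^p,s^q$, since $u,v$ are bounded below there. Your Cauchy criterion on the annuli $\epsilon<|z|<\epsilon'$ and your far-field bound via \eqref{eq:mass} are only cosmetic variants of the paper's cutoff $H=\zeta K$ and its use of the finiteness of the potential.

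The one step you leave implicit is the gain from $C^{0,\gamma}$ to $C^{0,\gamma_1}$ with $\gamma_1<\min\{\gamma+\rho,1\}$ for $\gamma>0$. This gain does not follow from the pointwise interpolation bound alone. It comes from the cancellation identity $\int[H(z+h)-H(z)][F(x-z)-F(x)]\,dz$, exactly as in \eqref{eq:holdergain}.
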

\begin{proof}
The kernel assumptions give
\begin{equation}\label{eq:gradkernel}
\nabla K(z)=-k(|z|)\frac{z}{|z|^2}K(z),\qquad
|\nabla K(z)|\le C|z|^{\rho-n-1}.
\end{equation}
Monotonicity gives $0\le u_\epsilon\le u$ and $0\le v_\epsilon\le v$. On any compact set away from zero, the sources are bounded in a neighborhood. Thus
\[
|u-u_\epsilon|+|v-v_\epsilon|\le C_{u,v}\epsilon^\rho
\]
there for all sufficiently small $\epsilon$.
Fix a ball whose closure avoids zero. Localize each source to a slightly larger ball. The remaining potential is $C^1$ on the smaller ball. Indeed, its kernel argument stays away from zero. Near a fixed point $x_0$, its derivative is bounded by a constant times $K(x_0-y)$, which is integrable against the source. This follows from \eqref{eq:gradkernel}, kernel comparison, and finiteness of the potential.

For the local part, take a smooth radial cutoff $\zeta$ equal to one near zero and set $H=\zeta K$. If $\rho>1$, then $\nabla H\in L^1$. Convolution with a locally bounded source gives a continuous derivative by continuity of translations in $L^1$. This proves the assertion in that case.

Suppose $0<\rho\le1$. Let $F$ be bounded and compactly supported. If $F\in C^\gamma$ for some $0<\gamma<1$, translation invariance of the integral gives
\[
(H*F)(x+h)-(H*F)(x)
=\int[H(z+h)-H(z)][F(x-z)-F(x)]\,dz.
\]
The same formula holds for bounded $F$, with the last difference bounded by $2\|F\|_\infty$. Write $\gamma=0$ for this initial estimate. Split the integral at $|z|=2|h|$. On the inner part, both kernel terms have integral at most $C|h|^\rho$. On the outer part, use the mean value theorem and \eqref{eq:gradkernel}. Thus, for $0<|h|<1/4$,
\begin{equation}\label{eq:holdergain}
|(H*F)(x+h)-(H*F)(x)|
\le C\|F\|_{C^\gamma}
\left(|h|^{\rho+\gamma}
+|h|\int_{2|h|}^{R_0}r^{\rho+\gamma-2}\,dr+|h|\right).
\end{equation}
Here $R_0$ is fixed by the cutoff. For $\gamma=0$, the norm means the supremum norm. Terms in which a derivative falls on $\zeta$ are bounded by $C|h|$.

Estimate \eqref{eq:holdergain} gives every H\"older exponent
$0<\gamma_1<\min\{\rho+\gamma,1\}$. On a compact set away from zero, $u$ and $v$ have positive minima. The maps $s\mapsto s^p$ and $s\mapsto s^q$ are smooth on their local ranges. Since $a,b$ are $C^1$ there, every local H\"older exponent of $(u,v)$ is also an exponent of $(f,g)$. Starting with bounded sources, apply \eqref{eq:holdergain} on successively smaller balls. At each step one may increase the exponent by $\rho/2$, until reaching an exponent $\gamma$ with
\[
1-\rho<\gamma<1.
\]
The finite number of localizations does not affect the conclusion on a prescribed compact subset.

For such a localized source $F$, radial oddness gives
\[
\int_{|z|>\epsilon}\nabla H(z)\,dz=0.
\]
Consequently the truncated derivative equals
\[
\int_{|z|>\epsilon}\nabla H(z)[F(x-z)-F(x)]\,dz.
\]
Its omitted part is bounded, uniformly on compact sets, by
\[
C\|F\|_{C^\gamma}\int_0^\epsilon r^{\rho+\gamma-2}\,dr
\le C\|F\|_{C^\gamma}\epsilon^{\rho+\gamma-1}.
\]
The full integral is continuous by dominated convergence. Capping $H$ at radius $\epsilon$ produces locally $C^1$ convolutions. They converge uniformly to $H*F$, and their derivatives converge uniformly to this continuous integral. Hence $H*F$ is $C^1$.

The same argument applied to the original potentials proves \eqref{eq:gradpotential}. For fixed $\epsilon$, differentiation is legitimate because $K_\epsilon$ has weak gradient
$\mathbf1_{\{|z|>\epsilon\}}\nabla K(z)$ and
$|\nabla K_\epsilon|\le k_*\epsilon^{-1}K$. Local integrability of the sources and finiteness of the potentials control both local and exterior terms. The regularity constants in this proof may depend on the fixed solution. They will not enter the uniform energy estimate.
\end{proof}

\begin{lemma}\label{lem:virial}
Let $d_a=x\cdot\nabla\log a_0$ and $d_b=x\cdot\nabla\log b_0$. For every $0<\rho<n$,
\begin{align}
&\sigma\int(n+\beta+d_b)\eta fu
+\tau\int(n+\alpha+d_a)\eta gv-\sigma W_f-\tau W_g\notag\\
&\hspace{25mm}=\iint k(|x-y|)\Phi(x,y)\,d\pi(x,y).
\label{eq:virial}
\end{align}
All integrals in this identity are absolutely convergent.

The truncated identity has the following structural error bound. Choose a fixed smooth function $\chi:[0,\infty)\to[0,1]$ with $\chi=0$ on $[0,1]$, $\chi=1$ on $[2,\infty)$, and $|\chi'|\le2$. For $0<\ell<1/4$, set
\[
X_\ell(x)=X(x)\chi(|x|/\ell),\qquad
\Phi_\ell(x,y)=\frac{(X_\ell(x)-X_\ell(y))\cdot(x-y)}{|x-y|^2}.
\]
Denote the left side of \eqref{eq:virial} by $V$. Then
\begin{equation}\label{eq:truncationerror}
\left|V-\iint_{|x-y|>\epsilon}k(|x-y|)\Phi_\ell\,d\pi\right|
\le C\bigl(\mathcal O_\ell+\mathcal Q_\epsilon\bigr),
\end{equation}
where
\[
\begin{aligned}
\mathcal O_\ell&=\int_{B_{2\ell}}(fu+gv),\\
\mathcal Q_\epsilon
&=\iint_{|x-y|\le\epsilon}
(\mathbf1_{B_2}(x)+\mathbf1_{B_2}(y))\,d\pi.
\end{aligned}
\]
The constant $C$ depends only on the structural data. For each fixed solution,
\[
\lim_{\ell\downarrow0}\mathcal O_\ell=0,
\qquad \lim_{\epsilon\downarrow0}\mathcal Q_\epsilon=0.
\]
\end{lemma}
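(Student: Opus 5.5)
Following the standard Pohozaev/virial route, we test the two equations against the vector field $X_\ell$ applied to the potentials. This is done at the truncated level, and the two interactions are combined before the cutoff is removed. The basic computation uses $X_\ell\cdot\nabla u_\epsilon$ against $f$ and $X_\ell\cdot\nabla v_\epsilon$ against $g$, where $u_\epsilon,v_\epsilon$ are the capped potentials of Lemma~\ref{lem:differentiation}. Since $X_\ell$ vanishes near the origin and has compact support in $B_2$, every integrand lives on a compact subset of $\mathbb R^n\setminus\{0\}$. There $u,v\in C^1$, $a_0,b_0\in C^1$, and all quantities are bounded.

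The first step works on the source side. We write $f\,X_\ell\cdot\nabla u = b\,X_\ell\cdot\nabla(u^{q+1})/(q+1)$ and integrate by parts:
\[
\int f\,X_\ell\cdot\nabla u=-\sigma\int u^{q+1}\operatorname{div}(bX_\ell).
\]
Using $\operatorname{div}(bX)=b\bigl[(n+\beta+d_b)\eta-w\bigr]$ on the support of $X_\ell$, we expand the divergence, treating separately the region where $\chi(|x|/\ell)=1$ and the shell $\ell<|x|<2\ell$. The shell error is bounded by $C\int_{B_{2\ell}}fu$, because $|X\cdot\nabla\chi(|x|/\ell)|\le C$ and $|x\cdot\nabla\log b|\le|\beta|+L$. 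The same expansion for $g$ gives the left side $V$ up to $C\mathcal O_\ell$.

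The second step works on the kernel side. By \eqref{eq:gradpotential} and \eqref{eq:gradkernel},
\[
\int f(x)X_\ell(x)\cdot\nabla u_\epsilon(x)\,dx
=-\iint_{|x-y|>\epsilon}k\,\frac{X_\ell(x)\cdot(x-y)}{|x-y|^2}\,d\pi.
\]
The $g$-analogue, after swapping variables, carries $X_\ell(y)\cdot(y-x)$. Adding the two gives exactly $-\iint_{|x-y|>\epsilon}k\Phi_\ell\,d\pi$. The combined integrand is bounded, since $|\Phi_\ell|\le\|\nabla X_\ell\|_\infty\le C$ uniformly in $\ell$, and $\pi$ restricted to pairs with an endpoint in $B_2$ is locally finite. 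This is the reason for combining before taking limits: each separate integral need not converge absolutely when $\rho\le1$.

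The remaining discrepancy comes from replacing $\nabla u$ by $\nabla u_\epsilon$. We handle it by integrating by parts in the cutoff form. We write $\int fX_\ell\cdot\nabla(u-u_\epsilon)$ as an integral over $\{|x-y|\le\epsilon\}$, in the same combined symmetric form. Its integrand is $k\Phi_\ell$ together with the cap contribution $K_\epsilon$ versus $K$ on near-diagonal pairs. Both are bounded by $C\,d\pi$ on pairs with $|x-y|\le\epsilon$ and one endpoint in $B_2$, which gives $C\mathcal Q_\epsilon$.

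The main obstacle is making this last step rigorous. I would carry out the pairing directly at the truncated level: symmetrize $\iint_{|x-y|>\epsilon}$ and compare with $\iint$ of $\nabla K_\epsilon$. The difference involves only the sphere $|x-y|=\epsilon$ and the set $|x-y|<\epsilon$, where $\nabla K_\epsilon=0$. Consequently the two truncations agree, and the $\mathcal Q_\epsilon$ term arises only through the $\epsilon$-dependence of $u_\epsilon$ versus $u$ in the source-side integration by parts. That dependence is controlled by $0\le u-u_\epsilon\le\int_{|x-y|\le\epsilon}Kg$, which yields exactly $\mathcal Q_\epsilon$-type terms.

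For the limits, $\mathcal O_\ell\to0$ because $fu,gv\in L^1_{\mathrm{loc}}$. Also $\mathcal Q_\epsilon\to0$ by dominated convergence, since $\pi$ restricted to endpoints in $B_2$ has finite mass $\int_{B_2}(fu+gv)$. Letting $\epsilon,\ell\downarrow0$ with $|k\Phi_\ell|\le C$ and $\Phi_\ell\to\Phi$ pointwise then yields \eqref{eq:virial}, with absolute convergence.
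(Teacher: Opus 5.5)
Your outline uses the same ingredients as the paper's proof: the combined truncated identity $-\int fX_\ell\cdot\nabla u_\epsilon-\int gX_\ell\cdot\nabla v_\epsilon=\iint_{|x-y|>\epsilon}k\Phi_\ell\,d\pi$, the source-side integration by parts with shell error $C\mathcal O_\ell$, and the domination $|k\Phi_\ell|\le C(\mathbf 1_{B_2}(x)+\mathbf 1_{B_2}(y))$. The gap is at the step you flag yourself, replacing $\nabla u_\epsilon,\nabla v_\epsilon$ by $\nabla u,\nabla v$: the mechanism you propose does not give \eqref{eq:truncationerror}.

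The term $f\,X_\ell\cdot\nabla(u-u_\epsilon)=bu^qX_\ell\cdot\nabla(u-u_\epsilon)$ is not a total derivative. To use $0\le u-u_\epsilon\le\int_{|x-y|\le\epsilon}Kg$ you must move the derivative onto $bu^qX_\ell$. This produces $q\,bu^{q-1}(X_\ell\cdot\nabla u)(u-u_\epsilon)$, which is bounded by a multiple of the $\mathcal Q_\epsilon$ integrand only under an estimate such as $|x|\,|\nabla u|\le Cu$ on $B_2$. That estimate depends on the solution and is not part of the structural data.

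More fundamentally, this route treats the two interactions separately. It thereby discards the symmetric pairing, which is exactly what turns the singular factor $|\nabla K(z)|=k(|z|)|z|^{-1}K(z)$ into the bounded difference quotient $k\Phi_\ell$. On its own, the near-diagonal double integral representing $\int fX_\ell\cdot\nabla(u-u_\epsilon)$ need not converge absolutely when $\rho\le1$. Your earlier sentence, which writes the combined difference as a symmetric integral over $\{|x-y|\le\epsilon\}$, points the right way. However, that is a limit statement rather than an absolutely convergent rewriting. There is also no separate cap contribution, since $\nabla K_\epsilon=\mathbf 1_{\{|z|>\epsilon\}}\nabla K$.

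The missing ingredient is the uniform convergence in \eqref{eq:gradpotential}. For $0<\epsilon'<\epsilon$, subtracting two truncated identities gives
\[
-\int fX_\ell\cdot\nabla(u_{\epsilon'}-u_\epsilon)-\int gX_\ell\cdot\nabla(v_{\epsilon'}-v_\epsilon)=\iint_{\epsilon'<|x-y|\le\epsilon}k\Phi_\ell\,d\pi .
\]
Let $\epsilon'\downarrow0$. The left side converges because the gradients converge uniformly on $\operatorname{supp}X_\ell$, which avoids the origin. The right side converges by dominated convergence. So the combined difference equals $\iint_{|x-y|\le\epsilon}k\Phi_\ell\,d\pi$, whose modulus is at most $C\mathcal Q_\epsilon$ with $C$ structural, and your direct route then closes.

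The paper uses the same convergence but orders the limits differently. It first lets $\epsilon\downarrow0$ at fixed $\ell$ to obtain the exact identity with $\Phi_\ell$. It then integrates by parts and lets $\ell\downarrow0$ to get \eqref{eq:virial}. Only afterwards does it read off \eqref{eq:truncationerror} from \eqref{eq:virial}, via $|\Phi-\Phi_\ell|\le C(\mathbf 1_{B_{2\ell}}(x)+\mathbf 1_{B_{2\ell}}(y))$ and the diagonal bound.

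Your pointwise route would still yield \eqref{eq:virial}, since for a fixed solution and fixed $\ell$ its error is $O(\epsilon^\rho)$ with a solution-dependent constant (Lemma~\ref{lem:differentiation}). The structural constant could then be recovered from \eqref{eq:virial} in the paper's way. As written, however, your argument does not prove \eqref{eq:truncationerror} with a structural $C$.
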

\begin{proof}
The fields $X_\ell$ have uniformly bounded Lipschitz constants. To check this, note that $|X(x)|\le |x|$ and that a derivative of $\chi(|x|/\ell)$ is supported in $\ell<|x|<2\ell$. Hence
\[
|X(x)|\,|\nabla\chi(|x|/\ell)|\le C.
\]
Also $X_\ell$ is supported in $\overline B_2$. It follows that
\[
|\Phi_\ell(x,y)|+|\Phi(x,y)|
\le C(\mathbf1_{B_2}(x)+\mathbf1_{B_2}(y)).
\]
The right side is integrable against $\pi$, since $fu,gv\in L^1_{\mathrm{loc}}$. Multiplication by $k\le k_*$ preserves this property. This estimate is valid even when the separate kernel gradients are not integrable.

For fixed $\ell,\epsilon>0$, the capped potentials of Lemma \ref{lem:differentiation} give
\begin{equation}\label{eq:combinedtruncation}
-\int fX_\ell\cdot\nabla u_\epsilon
-\int gX_\ell\cdot\nabla v_\epsilon
=\iint_{|x-y|>\epsilon}k(|x-y|)\Phi_\ell\,d\pi.
\end{equation}
Indeed, each differentiated interaction is absolutely integrable for fixed $\epsilon$, by
$|\nabla K_\epsilon|\le k_*\epsilon^{-1}K$ and the finite cutoff marginals. Interchange $x,y$ in the second interaction and use \eqref{eq:gradkernel}. This gives \eqref{eq:combinedtruncation} before any diagonal limit is taken.

Fix $\ell$. Formula \eqref{eq:gradpotential} gives uniform convergence of the gradients on the support of $X_\ell$, which avoids zero. Thus the left side of \eqref{eq:combinedtruncation} converges to
$-\int fX_\ell\cdot\nabla u-\int gX_\ell\cdot\nabla v$.
Dominated convergence applies to the combined right side by the preceding Lipschitz bound.

Write $\eta_\ell(x)=\eta(x)\chi(|x|/\ell)$. Integration by parts, with compact support away from zero, yields
\[
-\int fX_\ell\cdot\nabla u
=\sigma\int fu\bigl((n+\beta+d_b)\eta_\ell+x\cdot\nabla\eta_\ell\bigr).
\]
The second identity has $(g,v,\alpha,d_a,\tau)$ in place of $(f,u,\beta,d_b,\sigma)$. Since
\[
x\cdot\nabla\eta_\ell
=-w\chi(|x|/\ell)+\eta\,\frac{|x|}{\ell}\chi'(|x|/\ell),
\]
the inner-cutoff error in their sum is at most
$C\int_{B_{2\ell}}(fu+gv)$. It tends to zero as $\ell\downarrow0$. The remaining terms converge to $V$.

On the interaction side, $X-X_\ell$ is supported in $B_{2\ell}$ and has a Lipschitz constant bounded independently of $\ell$. Therefore
\[
|\Phi-\Phi_\ell|
\le C(\mathbf1_{B_{2\ell}}(x)+\mathbf1_{B_{2\ell}}(y)).
\]
Its integral against $k\,d\pi$ is at most $C\mathcal O_\ell$. This proves \eqref{eq:virial} and justifies the origin limit without estimating either differentiated interaction at zero.

The same estimate controls the change from $\Phi$ to $\Phi_\ell$. The omitted diagonal part is at most $C\mathcal Q_\epsilon$. This proves \eqref{eq:truncationerror}. Finally, $\mathcal O_\ell\to0$ by local integrability. The measure
$(\mathbf1_{B_2}(x)+\mathbf1_{B_2}(y))\,d\pi$ is finite and has no mass on the diagonal. Hence $\mathcal Q_\epsilon\to0$.
\end{proof}

\begin{remark}\label{rem:uniformlimits}
The constant in \eqref{eq:truncationerror} is unchanged under \eqref{eq:dilation}. More explicitly, for any $\theta>0$ and every dilated solution there are cutoffs $\ell_R,\epsilon_R>0$ such that
\[
\left|V_R-\iint_{|x-y|>\epsilon_R}
k_R(|x-y|)\Phi_{\ell_R}\,d\pi_R\right|
\le\theta(E_R+1).
\]
Choose the cutoffs from
$C(\mathcal O_{\ell_R,R}+\mathcal Q_{\epsilon_R,R})\le\theta(E_R+1)$.
The constant is uniform; the cutoff radii may depend on the solution and on $R$. We first take both limits for each fixed $R$. We then use \eqref{eq:virial}, Lemma \ref{lem:localization}, and Proposition \ref{prop:pressure}. Their constants are independent of $R$. No supremum over $R$ is interchanged with a cutoff limit.
\end{remark}

\begin{remark}\label{rem:method}
The source-layer intervals and directional caps come from \cite{OpenAI,MeiWei}. In Lemma \ref{lem:interval}, the sign of the midpoint term follows from $k\ge0$. No monotonicity of $k$ is needed. Estimate \eqref{eq:endpoint} controls the coefficient variation on each retained interval. It uses the full gradient bound in \eqref{eq:coeff}. The localization and pressure constants depend only on the structural data. Estimate \eqref{eq:truncationerror} treats the diagonal error. With the notation of Subsection~\ref{sec:localization}, we obtain
\[
\begin{aligned}
\iint k\Phi\,d\pi
&=\iint k\eta(x)\,d\pi-\iint kw(x)D\,d\pi+\mathcal E,\\
\sigma W_f+\tau W_g
&\le\iint kw(x)D\,d\pi+\epsilon E+C_\epsilon,\\
|\mathcal E|&\le\epsilon E+C_\epsilon.
\end{aligned}
\]
After cancellation of the $kwD$ terms, we use $k\le k_*$. This gives \eqref{eq:effectivegap}.
\end{remark}

\subsection{Proofs of the Liouville results}

\begin{proposition}\label{prop:energy}
Assume also \eqref{eq:radialdefect} and $\Gamma>0$. There is a constant
\[
C=C(n,\rho,p,q,\alpha,\beta,c_0,C_0,L,k_*,\varepsilon_a,\varepsilon_b,\Gamma)
\]
independent of $K,a_0,b_0,u,v$, such that every solution satisfies
\begin{equation}\label{eq:uniformenergy}
\int_{B_1}(fu+gv)\le E\le C.
\end{equation}
\end{proposition}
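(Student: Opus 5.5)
The plan is to combine the virial identity of Lemma~\ref{lem:virial}, the localization bound of Lemma~\ref{lem:localization} (in its version with the factor $k(|x-y|)$), and the pressure estimate of Proposition~\ref{prop:pressure}. In this way the boundary interaction $\iint k\,w(x)D\,d\pi$ cancels, and what remains is an inequality of the form $\Gamma' E\le\epsilon E+C_\epsilon$. We then absorb the $\epsilon E$ term. The first inequality $\int_{B_1}(fu+gv)\le E$ is immediate from $\eta=1$ on $B_1$ and $\eta\ge0$.

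\emph{Step 1 (lower bound of the virial left side).} By \eqref{eq:radialdefect}, $d_a\ge-\varepsilon_a$ and $d_b\ge-\varepsilon_b$. Hence the left side $V$ of \eqref{eq:virial} satisfies
\[
V\ge\sigma(n+\beta-\varepsilon_b)E_f+\tau(n+\alpha-\varepsilon_a)E_g-\sigma W_f-\tau W_g .
\]
Both coefficients are positive, since $\beta,\alpha>-\rho>-n$ and $\varepsilon_a,\varepsilon_b$ are bounded by $L$. Dropping $\varepsilon$'s larger than $L$ only weakens $\Gamma$, so we may reduce to $\varepsilon_a,\varepsilon_b\le L$ at the outset, or keep them as they are if the coefficients stay nonnegative. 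If one coefficient were negative, we would bound that term crudely, but positivity of $\Gamma$ handles it after Step 3.

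\emph{Step 2 (upper bound of the right side).} Lemma~\ref{lem:localization} with the $k$ factor gives
\[
\iint k\Phi\,d\pi\le\iint k\eta(x)\,d\pi-\iint k\,w(x)D\,d\pi+\epsilon E+C_\epsilon .
\]
Since $0\le k\le k_*$, we get $\iint k\eta(x)\,d\pi\le k_*\int\eta fu=k_*E_f$. We also use $|E_f-E_g|\le\epsilon E+C_\epsilon$ from \eqref{eq:localization}. Writing $E_f$ symmetrically, we bound $k_*E_f\le k_*(\sigma'E_f+\tau'E_g)+\epsilon E+C_\epsilon$ with any convex splitting. The right choice is to express everything through $E_f\approx E_g\approx E/2$.

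\emph{Step 3 (combination).} Insert Proposition~\ref{prop:pressure}:
\[
\sigma W_f+\tau W_g\le\iint k\,wD\,d\pi+\epsilon E+C_\epsilon .
\]
Adding this to Steps 1--2, the $W$ terms and the $kwD$ terms cancel. This yields
\[
\sigma(n+\beta-\varepsilon_b)E_f+\tau(n+\alpha-\varepsilon_a)E_g-k_*E_f\le3\epsilon E+C_\epsilon .
\]
Replacing $E_f$ and $E_g$ by $E/2$, at a cost of $C\epsilon E+C_\epsilon$ via the bound on $|E_f-E_g|$, the left side becomes $\tfrac12\Gamma E$, because $\sigma=1/(q+1)$, $\tau=1/(p+1)$ and $k_*=n-\rho+\varepsilon_K$. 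Choosing $\epsilon$ so that $C\epsilon\le\Gamma/4$ gives $E\le C$.

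All these inequalities are meaningful because $E$ is finite for the fixed solution and every integral in \eqref{eq:virial} converges absolutely (Lemma~\ref{lem:virial}); no limit in the cutoffs is taken here. The constants come only from Lemma~\ref{lem:localization} and Proposition~\ref{prop:pressure}, which are structural, so $C$ depends only on the listed data.

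The main obstacle is bookkeeping rather than analysis. We must make sure the $kwD$ term appears with exactly the same sign convention and the same $D=D_{\omega_x}$ in both the localization of $\Phi$ and the pressure estimate, so that the cancellation is exact. We must also check that the symmetrization $E_f\approx E_g$ produces precisely the combination $\Gamma$ rather than a weaker one.
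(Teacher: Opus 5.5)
Your proposal is correct and follows the paper's proof essentially step for step. You use the radial-defect lower bound on the virial identity, then insert $\Phi=\eta(x)-w(x)D+(\Phi-\eta(x)+w(x)D)$ with Lemma~\ref{lem:localization} and Proposition~\ref{prop:pressure} so that the $kwD$ terms cancel, then the bound $\iint k\eta(x)\,d\pi\le k_*E_f$, and finally the symmetrization $c_1E_f+c_2E_g=\frac{\Gamma}{2}E+\frac{c_1-c_2}{2}(E_f-E_g)$ with $c_1+c_2=\Gamma$, followed by absorption. One remark: your positivity digression in Step~1 is unnecessary, and its claim that $\varepsilon_a,\varepsilon_b\le L$ forces positive coefficients is false in general, since $L$ may exceed $n+\beta$. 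The inequality $(n+\beta+d_b)\eta fu\ge(n+\beta-\varepsilon_b)\eta fu$ holds pointwise whatever the sign of $n+\beta-\varepsilon_b$.
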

\begin{proof}
For each fixed dilation, Lemma \ref{lem:virial} gives \eqref{eq:virial}. By \eqref{eq:radialdefect},
\begin{align*}
\sigma(n+\beta-\varepsilon_b)E_f+\tau(n+\alpha-\varepsilon_a)E_g
\le\sigma W_f+\tau W_g+\iint k\Phi\,d\pi.
\end{align*}
Insert $\Phi=\eta(x)-w(x)D+(\Phi-\eta(x)+w(x)D)$. Lemma \ref{lem:localization} and Proposition \ref{prop:pressure} imply, after choosing their errors as small as needed,
\begin{equation*}
\sigma(n+\beta-\varepsilon_b)E_f+\tau(n+\alpha-\varepsilon_a)E_g
\le\iint k(|x-y|)\eta(x)\,d\pi+\epsilon E+C_\epsilon.
\end{equation*}
Now use $k\le k_*$. Let
\begin{equation*}
c_1=\sigma(n+\beta-\varepsilon_b)-k_*,\qquad
c_2=\tau(n+\alpha-\varepsilon_a).
\end{equation*}
Then $c_1+c_2=\Gamma$ and
\begin{equation*}
\frac\Gamma2E+\frac{c_1-c_2}{2}(E_f-E_g)
=c_1E_f+c_2E_g\le\epsilon E+C_\epsilon.
\end{equation*}
Put $c_*=|c_1-c_2|/2$. Apply \eqref{eq:localization} with an error $\theta>0$. Then
\[
\frac\Gamma2E\le(\epsilon+c_*\theta)E+C_\epsilon+c_*C_\theta.
\]
Choose $\epsilon=\Gamma/8$ and $\theta=\min\{1,\Gamma/[8(1+c_*)]\}$. These choices give
\[
E\le\frac4\Gamma\bigl(C_{\Gamma/8}+c_*C_\theta\bigr).
\]
These constants depend only on the listed data, including $\Gamma$. Since $\eta=1$ on $B_1$, this proves \eqref{eq:uniformenergy}.
\end{proof}

\begin{proof}[Proof of Theorem \ref{thm:main}]
Suppose a positive solution exists. Lemma \ref{lem:smallproduct} gives $pq>1$. The dilated data \eqref{eq:dilation} remain in the same structural class. Proposition \ref{prop:energy} therefore applies to every $(u_R,v_R)$ with the same constant. By \eqref{eq:energyscale},
\begin{equation*}
C\ge R^{\mu+\nu+\rho-n}\int_{B_R}(fu+gv).
\end{equation*}
Moreover,
\begin{equation*}
\gamma_0=\Gamma+\varepsilon_K+\tau\varepsilon_a+\sigma\varepsilon_b>0.
\end{equation*}
Formula \eqref{eq:gapscale} implies $\mu+\nu+\rho-n>0$. For $R\ge1$, the integral is bounded below by its fixed positive value on $B_1$. Letting $R\to\infty$ is a contradiction.
\end{proof}

\begin{proof}[Proof of Corollary \ref{cor:angular}]
A positive $C^1$ function on the sphere has a positive minimum and a finite maximum. Its degree-zero extension satisfies a bound of the form \eqref{eq:coeff}, and its radial logarithmic derivative is zero. The kernel has $k=n-\rho$. Thus $\Gamma$ is exactly the left side of \eqref{eq:rieszgap} minus $n-\rho$. Theorem \ref{thm:main} applies.
\end{proof}

\begin{lemma}\label{lem:newton}
Let $n\ge3$, $\alpha,\beta>-2$, and $p,q>0$. Suppose that
$a_0,b_0\in C^1(\mathbb R^n\setminus\{0\})$ and $0<c_0\le a_0,b_0\le C_0$.
Let $(u,v)$ be a positive solution of \eqref{eq:pde} in
$\bigl(C^2(\mathbb R^n\setminus\{0\})\cap C(\mathbb R^n)\bigr)^2$.
For the Newton kernel
\begin{equation*}
K(x)=\frac{|x|^{2-n}}{(n-2)|\mathbb S^{n-1}|},
\end{equation*}
the pair satisfies \eqref{eq:system} with finite potentials at every point.
No condition on $\Gamma$ or on the radial logarithmic derivatives is imposed.
\end{lemma}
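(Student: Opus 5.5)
The plan is the classical representation argument for positive superharmonic functions on $\mathbb R^n$. It has to be adapted to the point singularity at the origin and to the weights $|x|^\alpha$ and $|x|^\beta$, which are locally integrable because $\alpha,\beta>-2>-n$.

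\emph{Step 1: $-\Delta u$ holds in the distributional sense on all of $\mathbb R^n$.} Since $u$ is continuous on $\mathbb R^n$ and $C^2$ away from $0$, I want the distributional Laplacian of $u$ on $\mathbb R^n$ to coincide with the locally integrable function $g=|x|^\alpha a_0v^p$. Note that $g\in L^1_{\rm loc}$ because $v$ is bounded near $0$ and $\alpha>-n$. To prove this, I will test against $\varphi\in C_c^\infty$ and cut out a ball $B_r$. Green's formula leaves the boundary terms $\int_{\partial B_r}(u\partial_\nu\varphi-\varphi\partial_\nu u)$. The first term is $O(r^{n-1})$.

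For the second term, write $m(r)=\int_{\partial B_r}\partial_\nu u$ with $\nu$ the outward normal. Integrating $-\Delta u=g\ge0$ over the annulus $B_{r_0}\setminus B_r$ shows that $m(r)$ is monotone in $r$ and has a finite limit $m_0$ as $r\to0$. If $m_0\ne0$, then the spherical mean $\bar u(r)$ would satisfy $\bar u'(r)\approx m_0/(|\mathbb S^{n-1}|r^{n-1})$. This makes $\bar u$ unbounded as $r\to0$ (note $n\ge3$), contradicting continuity of $u$ at $0$. Hence $m_0=0$. Using $\varphi(x)=\varphi(0)+O(r)$ on $\partial B_r$, the term $\int_{\partial B_r}\varphi\,\partial_\nu u$ then tends to $\varphi(0)m_0=0$; the $O(r)$ error needs $r\int_{\partial B_r}|\partial_\nu u|\to0$, which I expect to control via the superharmonicity below. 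So $-\Delta u=g$ in $\mathcal D'(\mathbb R^n)$, and $u$ is a positive continuous superharmonic function on $\mathbb R^n$. The same argument applies to $v$ with $f=|x|^\beta b_0u^q$.

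\emph{Step 2: Riesz decomposition.} A positive superharmonic function on $\mathbb R^n$, $n\ge3$, has a greatest harmonic minorant $h\ge0$, and
\[
u=K*g+h,
\]
where the representation includes the finiteness of $K*g$ at every point. Concretely, I will apply Green's representation on $B_R$: $u(x)=\int_{B_R}G_R(x,y)g(y)\,dy+P_R[u](x)$, with $G_R$ the Green function of $B_R$ and $P_R[u]$ the Poisson integral of $u$ on $\partial B_R$. Here $G_R\uparrow K$ as $R\to\infty$ and $P_R[u]\ge0$. Monotone convergence then gives $K*g\le u<\infty$ everywhere, and $h=\lim_R P_R[u]$ exists as a nonnegative harmonic function on $\mathbb R^n$. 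By Liouville's theorem for positive harmonic functions, $h\equiv c_u\ge0$ is constant. Likewise $v=K*f+c_v$.

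\emph{Step 3: the constants vanish (main obstacle).} If $c_v>0$, then $v\ge c_v$, so $g\ge c\,c_v^p|y|^\alpha$. Since $\alpha>-2$, we have $\int|y|^{\alpha}|x-y|^{2-n}\,dy=\infty$, because the integrand behaves like $|y|^{\alpha+2-n}$ at infinity. This contradicts $K*g\le u(x)<\infty$. Hence $c_v=0$, and symmetrically $c_u=0$ using $u\ge c_u$ and $\beta>-2$. Therefore $(u,v)$ satisfies \eqref{eq:system} with the Newton kernel, and both potentials are finite at every point.

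The delicate point I expect is Step 1: removability of the singularity at the origin, that is, ruling out a Dirac mass in $-\Delta u$ concentrated at $0$. This is handled by the flux argument above, together with boundedness of $u$ near $0$.
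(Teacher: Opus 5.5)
\textbf{Gap in Step~1.} Your overall architecture is sound. You show the distributional equation on all of $\mathbb R^n$, obtain the representation $u=K*g+c_u$, and remove the constants because $\int|y|^{\alpha}|x-y|^{2-n}\,dy=\infty$ when $\alpha>-2$. Step~3 is exactly the paper's argument. The problem is in Step~1. After proving $m(r)\to0$, you still need
\[
\int_{\partial B_r}(\varphi-\varphi(0))\,\partial_\nu u\longrightarrow0,
\]
that is, $r\int_{\partial B_r}|\partial_\nu u|\to0$, at least along a sequence $r_k\downarrow0$. Convergence of the \emph{signed} flux $m(r)$ gives no control of $\int_{\partial B_r}|\partial_\nu u|$, because $\partial_\nu u$ may change sign on the sphere. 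The hypotheses $u\in C^2(\mathbb R^n\setminus\{0\})\cap C(\mathbb R^n)$ give no bound on $\nabla u$ near the origin. Your proposed remedy, ``the superharmonicity below,'' is circular as written. Superharmonicity of $u$ on all of $\mathbb R^n$, and the Riesz decomposition in Step~2, are consequences of the distributional identity that Step~1 is supposed to establish.

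\textbf{How to repair it.} The simplest fix is the paper's. Test the equation with $\varphi(1-\xi_r)$, where $\xi_r=\xi(\cdot/r)$ is a smooth cutoff equal to one on $B_r$ and supported in $B_{2r}$. Then every derivative falls on the test function, and
\[
\Bigl|\int u\,\Delta(\varphi\xi_r)\Bigr|\le C_\varphi\|u\|_{L^\infty(B_{2r})}\,(r^n+r^{n-1}+r^{n-2})\longrightarrow0,
\]
since $n\ge3$. No gradient or flux control is needed, and the Dirac-mass discussion becomes unnecessary. If you prefer to keep the sharp cutoff, use a Caccioppoli estimate on dyadic annuli away from the origin. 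Boundedness of $u,v$ near $0$ and $\int_{B_{4\rho}}g\le C\rho^{n+\alpha}\le C\rho^{n-2}$ give
\[
\int_{B_{2\rho}\setminus B_\rho}|\nabla u|\le C\rho^{n-1}.
\]
This yields radii $r_k\downarrow0$ with $r_k\int_{\partial B_{r_k}}|\nabla u|\to0$, which closes your argument and also implies $m_0=0$ directly.

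\textbf{Comparison of Step~2 with the paper.} Once Step~1 is repaired, your Step~2 takes a different route. You use Green functions of balls and the Riesz decomposition. The paper instead mollifies, uses spherical means and Fatou to get $K*g\le u$, proves $-\Delta(K*g)=g$ by Fubini, and applies Liouville to the nonnegative harmonic distribution $u-K*g$. The paper's route only gives $u=c_u+K*g$ almost everywhere. It therefore needs a separate continuity proof for the potentials, including the origin estimate \eqref{eq:newtonorigin}. Your route can give the identity at every point, including $x=0$, with no continuity argument. To do so, you must justify the ball representation for a function that is merely continuous at $0$. One way: both $u-c_u$ and $K*g$ are superharmonic, and a superharmonic function equals the limit of its ball averages, so almost-everywhere equality upgrades to equality everywhere.
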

\begin{proof}
Set $g=|x|^{\alpha}a_0v^p$ and $f=|x|^{\beta}b_0u^q$. Since $u,v$ are locally bounded and $\alpha,\beta>-2>-n$, we have $f,g\in L^1_{\mathrm{loc}}$. Choose $\xi\in C_c^\infty(B_2)$ with $\xi=1$ on $B_1$, and write $\xi_r(x)=\xi(x/r)$. For $\varphi\in C_c^\infty(\mathbb R^n)$, test the first equation off zero with $\varphi(1-\xi_r)$. The cutoff error satisfies
\[
\left|\int u\,\Delta(\varphi\xi_r)\right|
\le C_{u,\varphi}(r^n+r^{n-1}+r^{n-2})\longrightarrow0.
\]
The source error tends to zero by local integrability. Hence $-\Delta u=g$ in $\mathcal D'(\mathbb R^n)$. The same argument gives $-\Delta v=f$.

Mollify by a nonnegative compactly supported approximate identity. For the smooth positive superharmonic function $u_\epsilon$, integration of the spherical flux about $x$ gives
\begin{equation*}
u_\epsilon(x)-\frac1{|\mathbb S^{n-1}|}\int_{\mathbb S^{n-1}}u_\epsilon(x+R\theta)\,d\theta
=\frac1{(n-2)|\mathbb S^{n-1}|}
\int_{|x-y|<R}(|x-y|^{2-n}-R^{2-n})g_\epsilon(y)\,dy.
\end{equation*}
The truncated kernel on the right increases to $K(x-y)$ as $R\to\infty$. The spherical mean is positive. Thus monotone convergence gives $K*g_\epsilon\le u_\epsilon$. Since $g_\epsilon\to g$ almost everywhere and $u_\epsilon\to u$ locally uniformly, Fatou's lemma yields $K*g\le u$ at every point. Likewise $K*f\le v$. Both potentials are finite.

Since $-\Delta K=\delta_0$, Fubini's theorem gives $-\Delta(K*g)=g$. It applies because for a compactly supported test function
\begin{equation*}
\int g(y)\int K(x-y)|\Delta\varphi(x)|\,dx\,dy
\le\int u(x)|\Delta\varphi(x)|\,dx<\infty.
\end{equation*}
Thus $u-K*g$ is a nonnegative harmonic distribution. It is a constant $c_u\ge0$. To verify the last assertion, mollify it. A nonnegative entire harmonic function $H$ satisfies, for $b=|x-z|$,
\begin{equation*}
H(x)\le\left(\frac{R+b}{R}\right)^nH(z).
\end{equation*}
This follows from the mean value property and $B_R(x)\subset B_{R+b}(z)$. Let $R\to\infty$ and interchange $x,z$. Each mollification is constant. Testing their distributional limit against one nonnegative test function of integral one shows that these constants converge to a number $c_u\ge0$. Thus $u=c_u+K*g$ almost everywhere. Similarly $v=c_v+K*f$ with $c_v\ge0$.

If $c_u>0$, then $f(y)\ge c|y|^{\beta}$ almost everywhere. This gives
\begin{equation*}
v(0)\ge\int_{|y|>1}K(y)f(y)\,dy
\ge c\int_1^\infty r^{\beta+1}\,dr=\infty,
\end{equation*}
because $\beta>-2$. Hence $c_u=0$. The same argument gives $c_v=0$.

We finally prove continuity of the potentials. Away from zero, the sources are locally bounded. The local Newton kernel belongs to $W^{1,1}_{\mathrm{loc}}$, and the far part is dominated by a fixed finite potential. The convolution argument of Lemma \ref{lem:differentiation} therefore gives continuity there. At zero, for $0<r<1$, it suffices to show
\begin{equation}\label{eq:newtonorigin}
\sup_{|x|<r/2}\int_{|y|<r}|x-y|^{2-n}|y|^{\alpha}\,dy\le Cr^{\alpha+2}.
\end{equation}
For $\alpha\ge0$, use $|y|^{\alpha}\le r^{\alpha}$ and integrate $|x-y|^{2-n}$ over $B_{3r/2}(x)$. Suppose $-2<\alpha<0$ and set $t=|x|>0$. On $|y|<2t$, the substitution $y=tz$ gives
\[
\int_{|y|<2t}|x-y|^{2-n}|y|^{\alpha}\,dy
=t^{\alpha+2}\int_{|z|<2}|\omega_x-z|^{2-n}|z|^{\alpha}\,dz
\le Ct^{\alpha+2}.
\]
The last integral is uniformly bounded in $\omega_x\in\mathbb S^{n-1}$. The singularities at $0$ and $\omega_x$ are separated. Their orders satisfy $-\alpha<n$ and $n-2<n$. On $2t\le|y|<r$, $|x-y|\ge|y|/2$, so
\[
\int_{2t\le|y|<r}|x-y|^{2-n}|y|^{\alpha}\,dy
\le C\int_{2t}^r s^{\alpha+1}\,ds\le Cr^{\alpha+2}.
\]
At $x=0$, the same radial integral proves \eqref{eq:newtonorigin}. Since $g(y)\le C_{u,v}|y|^{\alpha}$ near zero, this estimate makes the local potential uniformly small as $r\downarrow0$. For $|y|\ge r$ and $|x|<r/2$, $K(x-y)g(y)\le CK(y)g(y)$, which is integrable. Dominated convergence treats the remaining part. Replacing $\alpha$ by $\beta$ proves continuity of $K*f$. The almost-everywhere identities now hold at every point.
\end{proof}

\begin{proof}[Proof of Corollary \ref{cor:pde}]
Apply Lemma \ref{lem:newton}, and then Theorem \ref{thm:main} with $\rho=2$ and $\varepsilon_K=0$.
\end{proof}

\subsection{Examples and scope of the hypotheses}\label{sec:examples}

The following examples distinguish three features of the theorem: perturbations need not become homogeneous at either end, the hyperbola condition extends beyond individual exponent bounds, and positive upper and lower coefficient bounds alone do not imply subcritical nonexistence.

\begin{proposition}\label{prop:logperiodic}
Assume \eqref{eq:parameters}. Let $0\le\delta\le n-\rho$ and set
\begin{equation*}
K(r)=r^{\rho-n}\exp(\delta\sin\log r).
\end{equation*}
Let $H_a,H_b\in C^1(\mathbb S^{n-1})$ and $s_a,s_b\in\mathbb R$. Define
\begin{equation*}
a_0(x)=\exp\bigl(H_a(\omega_x)+s_a\sin\log|x|\bigr),\quad
b_0(x)=\exp\bigl(H_b(\omega_x)+s_b\sin\log|x|\bigr).
\end{equation*}
Then \eqref{eq:system} has no positive entire solution if
\begin{equation*}
\frac{n+\alpha-|s_a|}{p+1}+\frac{n+\beta-|s_b|}{q+1}>n-\rho+\delta.
\end{equation*}
\end{proposition}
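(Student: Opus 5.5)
This is a direct application of Theorem~\ref{thm:main}. The plan is to verify \eqref{eq:kernel}--\eqref{eq:radialdefect} with $\varepsilon_K=\delta$, $\varepsilon_a=|s_a|$, $\varepsilon_b=|s_b|$, and with structural constants $c_0,C_0,L$ depending on $\delta,H_a,H_b,s_a,s_b$. Then $\Gamma$ in \eqref{eq:effectivegap} is exactly the left side of the hypothesis minus $n-\rho+\delta$, so $\Gamma>0$ and the theorem gives nonexistence.

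\emph{Kernel.} Clearly $K\in C^1((0,\infty))$, and since $|\sin\log r|\le1$,
\[
e^{-\delta}r^{\rho-n}\le K(r)\le e^{\delta}r^{\rho-n}.
\]
Differentiating $\log K=(\rho-n)\log r+\delta\sin\log r$ gives
\[
k(r)=-\frac{rK'(r)}{K(r)}=n-\rho-\delta\cos\log r.
\]
Hence $k(r)\le n-\rho+\delta=k_*$. The lower bound $k\ge0$ holds precisely because $\delta\le n-\rho$, which is the role of that restriction in the statement. One point needs checking: the theorem requires $K$ to be nonincreasing, and this is the same as $k\ge0$.

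\emph{Coefficients.} Since $H_a,H_b$ are continuous on the compact sphere and $|\sin|\le1$, the coefficients satisfy $e^{-\|H_a\|_\infty-|s_a|}\le a_0\le e^{\|H_a\|_\infty+|s_a|}$, and similarly for $b_0$. Both are $C^1$ away from the origin. The gradient of $\log a_0$ splits into two parts. The angular part $\nabla[H_a(\omega_x)]$ is bounded by $\|\nabla_{\mathbb S}H_a\|_\infty/|x|$. The radial part is $s_a\cos(\log|x|)\,x/|x|^2$. Thus
\[
|x|\,|\nabla\log a_0|\le\|\nabla_{\mathbb S}H_a\|_\infty+|s_a|,
\]
which gives \eqref{eq:coeff}. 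For \eqref{eq:radialdefect}, note that the degree-zero term is annihilated by $x\cdot\nabla$. Therefore
\[
x\cdot\nabla\log a_0=s_a\cos\log|x|\ge-|s_a|,
\]
and likewise for $b_0$.

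There is no real obstacle beyond bookkeeping. The only points needing care are that the constants $c_0,C_0,L$ may be taken as the maxima over the listed quantities, and that the restriction $\delta\le n-\rho$ is exactly what ensures $k\ge0$.
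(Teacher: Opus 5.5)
Your proposal is correct and follows the paper's proof essentially verbatim: verify \eqref{eq:kernel}--\eqref{eq:radialdefect} with $k(r)=n-\rho-\delta\cos\log r$ (so $0\le k\le n-\rho+\delta$ because $\delta\le n-\rho$), two-sided coefficient bounds, the bounded gradients \eqref{eq:coeff}, and radial logarithmic derivatives $s_a\cos\log|x|$, $s_b\cos\log|x|$. Then apply Theorem~\ref{thm:main} with $\varepsilon_K=\delta$, $\varepsilon_a=|s_a|$, $\varepsilon_b=|s_b|$.
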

\begin{proof}
The kernel satisfies $e^{-\delta}r^{\rho-n}\le K(r)\le e^\delta r^{\rho-n}$ and
$k(r)=n-\rho-\delta\cos\log r\in[n-\rho-\delta,n-\rho+\delta]$. Both coefficients have two-sided positive bounds. Their logarithmic gradients obey \eqref{eq:coeff}, since the angular functions and their first derivatives are bounded. Their radial logarithmic derivatives are $s_a\cos\log|x|$ and $s_b\cos\log|x|$. Apply Theorem \ref{thm:main} with defects $\delta,|s_a|,|s_b|$.
\end{proof}

These data need not approach homogeneous data as $r\to0$ or $r\to\infty$.

\begin{remark}\label{rem:newrange}
In Proposition \ref{prop:logperiodic}, take $n=3$ and $\alpha=\beta=1/2$. First choose
\[
\rho=\frac32,\qquad (p,q)=(5,2),\qquad
\delta=s_a=s_b=\frac1{20}.
\]
Then
\[
P_{\alpha}=P_{\beta}=\frac{11}3<5,\qquad
\gamma_0=\frac14,\qquad
\Gamma=\frac14-\frac1{20}-\frac1{120}-\frac1{60}
=\frac7{40}>0.
\]
The scaling exponents satisfy
\[
\mu=\frac43<n-\rho=\frac32,\qquad
\nu=\frac23<n-\rho.
\]
Thus \eqref{eq:mass} alone gives no contradiction.

For an example in the low-order range, let
\[
\rho=\frac12,\qquad (p,q)=\left(2,\frac32\right),\qquad
\delta=s_a=s_b=\frac1{100}.
\]
Now
\[
P_{\alpha}=P_{\beta}=\frac95<2,\qquad
\gamma_0=\frac1{15},\qquad
\Gamma=\frac1{15}-\frac1{100}-\frac1{300}-\frac1{250}
=\frac{37}{750}>0.
\]
Here
\[
\mu=\frac32<n-\rho=\frac52,\qquad
\nu=\frac54<n-\rho.
\]
In both examples, $H_a,H_b$ are arbitrary fixed $C^1$ functions.
The kernels and coefficients are nonhomogeneous.
Even with zero oscillations, $p>P_{\alpha}$ in both cases.
Hence these parameters lie outside the stated equal-order rectangle of \cite{Le,LiYuZhang}.
The weights are nonzero, whereas \cite{MeiWei} treats the unweighted system.
\end{remark}

\begin{remark}\label{rem:criticalbubble}
The strict gap cannot be replaced by a nonnegative gap in Theorem \ref{thm:main}. Let $n\ge3$, $N=n-2$, $\rho=2$, $\alpha=\beta=0$, and $p=q=(n+2)/N$. For $a_0=b_0=1$, set
\[
U(x)=[nN]^{N/4}(1+|x|^2)^{-N/2}.
\]
Differentiation gives
\[
-\Delta(1+|x|^2)^{-N/2}
=nN(1+|x|^2)^{-(n+2)/2}.
\]
Hence $-\Delta U=U^p$. Lemma \ref{lem:newton} gives a positive solution $u=v=U$ of the Newton potential system with $\Gamma=0$.

This family also explains the order of the limits in Remark \ref{rem:uniformlimits}. Set $U_R(x)=R^{N/2}U(Rx)$. Let $\pi_R$ and $\mathcal Q_{\epsilon,R}$ be its interaction measure and diagonal quantity from Lemma \ref{lem:virial}. Since the total interaction mass is
\[
F:=\iint d\pi=\int_{\mathbb R^n}U^{p+1}\,dx\in(0,\infty),
\]
the critical change of variables gives, for every fixed $\epsilon>0$,
\[
\begin{aligned}
\mathcal Q_{\epsilon,R}
&=\iint_{|x-y|\le R\epsilon}
\bigl(\mathbf1_{B_{2R}}(x)+\mathbf1_{B_{2R}}(y)\bigr)\,d\pi(x,y)\\
&\longrightarrow 2F\qquad(R\to\infty).
\end{aligned}
\]
By monotone convergence, the limit is $2F$. In particular,
\[
\inf_{\epsilon>0}\sup_{R>0}\mathcal Q_{\epsilon,R}=2F>0.
\]
Thus the structural assumptions do not give uniform convergence of the diagonal remainder in $R$.
The proof uses \eqref{eq:truncationerror} with the solution-dependent cutoffs of Remark \ref{rem:uniformlimits}.
\end{remark}

Bounded positive coefficients may admit positive entire solutions. The next profile is from \cite{LeiLi}; see also \cite{Villavert}. We compute its defect and its effective gap.

\begin{proposition}\label{prop:counterexample}
Let $n\ge3$ and
\begin{equation*}
\frac n{n-2}<p<\frac{n+2}{n-2},\qquad \mu=\frac2{p-1}.
\end{equation*}
There is a smooth coefficient $c$ with positive upper and lower bounds such that
$-\Delta U=c(x)U^p$ has a smooth positive entire solution. The same pair $u=v=U$ solves a subcritical system with $a_0=b_0=c$ and $\alpha=\beta=0$.
Set $t=n-2-\mu$. The least radial defect of this coefficient is
\begin{equation}\label{eq:counterdefect}
\mathfrak d_c:=\sup_{r>0}\left[-r\frac{d}{dr}\log c(r)\right]_+
=\frac{2(\sqrt n-\sqrt t)}{\sqrt n+\sqrt t}.
\end{equation}
Its largest admissible effective gap is strictly negative:
\begin{equation}\label{eq:countergap}
\frac{2(n-\mathfrak d_c)}{p+1}-(n-2)
=\frac{2\mu-n+2-\mu\mathfrak d_c}{\mu+1}<0.
\end{equation}
\end{proposition}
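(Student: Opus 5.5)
The plan is to take the explicit radial profile of \cite{LeiLi} and read off the coefficient from the equation. Set
\[
U(x)=(1+|x|^2)^{-\mu/2},\qquad \mu=\frac2{p-1},
\]
so that $U^p=(1+r^2)^{-\mu/2-1}$. A direct radial computation of $U''+\frac{n-1}rU'$ should give
\[
-\Delta U=\mu(1+r^2)^{-\mu/2-2}\bigl[n+(n-2-\mu)r^2\bigr].
\]
This suggests defining
\[
c(r)=\mu\,\frac{n+tr^2}{1+r^2},\qquad t=n-2-\mu .
\]
The hypothesis $p>n/(n-2)$ is equivalent to $\mu<n-2$, that is $t>0$. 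Hence $c$ is smooth (a function of $r^2$), lies between $\mu t$ and $\mu n$, and solves $-\Delta U=cU^p$ with $U$ smooth and positive. Taking $u=v=U$ and $a_0=b_0=c$ then solves \eqref{eq:pde}, and by Lemma~\ref{lem:newton} also the potential system. The upper bound $p<(n+2)/(n-2)$ gives $\mu>(n-2)/2$, so $\gamma_0>0$ and the system is subcritical. The bound $|rc'/c|\le C$ is immediate from the formula.

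For \eqref{eq:counterdefect}, differentiate the logarithm and put $s=r^2$:
\[
-\frac{rc'}{c}=\frac{2s}{1+s}-\frac{2ts}{n+ts}=\frac{2(n-t)s}{(1+s)(n+ts)}.
\]
This is positive, so $c$ is decreasing and the positive part is harmless. Maximizing is equivalent to minimizing
\[
\frac{(1+s)(n+ts)}{s}=\frac ns+ts+n+t,
\]
whose minimum over $s>0$ is $(\sqrt n+\sqrt t)^2$, attained at $s=\sqrt{n/t}$. Therefore
\[
\mathfrak d_c=\frac{2(n-t)}{(\sqrt n+\sqrt t)^2}=\frac{2(\sqrt n-\sqrt t)}{\sqrt n+\sqrt t}.
\]

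For \eqref{eq:countergap}, use $p+1=2(\mu+1)/\mu$ to rewrite $\frac{2(n-\mathfrak d_c)}{p+1}-(n-2)$ as $\frac{2\mu-n+2-\mu\mathfrak d_c}{\mu+1}$. Since $n-2=\mu+t$, negativity is equivalent to
\[
\mathfrak d_c>\frac{\mu-t}{\mu}=d_*,
\]
which is consistent with Theorem~\ref{thm:radialthreshold}. Corollary~\ref{cor:pde} already forces the non-strict inequality $\mathfrak d_c\ge d_*$, since a solution exists.

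The main obstacle is strictness, which I will prove algebraically. Put $x=\sqrt n$ and $y=\sqrt t$, so $\mu=x^2-y^2-2$. Multiplying the inequality $\mu(\mathfrak d_c-1)+t>0$ by $x+y$ reduces it to
\[
(x-2y)^2(x+y)+2(3y-x)>0,
\]
using $x^3-3x^2y+4y^3=(x-2y)^2(x+y)$. Two constraints are available: $t<\mu$, i.e. $2y^2<x^2-2$, and $x^2\ge3$. If $3y\ge x$ there is nothing to prove. Otherwise $y<x/3$, so
\[
(x-2y)^2(x+y)>\frac{x^3}{9}
\]
and the left side exceeds $\frac{x^3}{9}-2x+6y$. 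This is still not enough for small $n$. If that direct case split fails, I would fall back on the radial Pohozaev obstruction of \cite[Theorem~2]{KYY}, or equivalently the direct flux argument of Section~\ref{sec:radial}, applied to this explicit pair to exclude $\mathfrak d_c=d_*$.
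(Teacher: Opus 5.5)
Your construction, the verification of the equation, the computation of $\mathfrak d_c$ by minimizing $n/s+ts+n+t$, and the reduction of \eqref{eq:countergap} to $\mu(\mathfrak d_c-1)+t>0$, i.e.\ $\mathfrak d_c>d_*$, all coincide with the paper's proof. The routes differ only at the strictness step.

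As you note, your case split does not close for small $n$, so your proof rests entirely on the fallback. That fallback is legitimate and should be stated as the argument rather than as a contingency. The explicit pair lies in $\mathcal A_{n,p}$: $c$ is smooth, $\mu t\le c\le\mu n$, and $|rc'/c|\le\mathfrak d_c$. Proposition~\ref{prop:radialendpoint} then gives $D(c)=\mathfrak d_c>d_*$ directly, so Corollary~\ref{cor:pde} is not needed even for the non-strict half. There is no circularity, because that proposition uses only the flux formula, the decay \eqref{eq:radialdecay}, and the Pohozaev derivative \eqref{eq:radialpohozaevderivative}.

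The paper instead finishes algebraically. With $z=\sqrt{t/n}$ it proves
\[
2\mu-n+2-\mu\mathfrak d_c=-(n-2)(2z-1)^2-\frac{8z^3}{1+z}.
\]
In your variables $x=\sqrt n$, $y=\sqrt t$ this is the identity
\[
x^2\bigl[(x-2y)^2(x+y)+2(3y-x)\bigr]=(x^2-2)(x+y)(x-2y)^2+8y^3 .
\]
The right side is manifestly positive for $x^2=n\ge3$ and $y>0$, and it does not even use $t<\mu$. This is the decomposition your case analysis was missing.

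The paper's route is elementary and keeps the example independent of Section~\ref{sec:radial}, so it serves as an explicit upper bound before the threshold is proved. Your route is heavier and refers forward. In exchange, it explains the sign structurally: negativity of the gap is forced for every admissible radial pair, so the exact value of $\mathfrak d_c$ plays no role in it.
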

\begin{proof}
Take
\begin{equation*}
U(x)=(1+|x|^2)^{-\mu/2},\qquad
c(x)=\mu\frac{n+(n-2-\mu)|x|^2}{1+|x|^2}.
\end{equation*}
Since $(n-2)/2<\mu<n-2$, the coefficient is positive, smooth, and bounded above and below. Direct differentiation yields
\begin{equation*}
-\Delta U=\mu\bigl[n+(n-2-\mu)|x|^2\bigr]
(1+|x|^2)^{-\mu/2-2}=c(x)U^p.
\end{equation*}
The last equality uses $\mu p/2=\mu/2+1$. The given range of $p$ also gives $2n/(p+1)>n-2$. Lemma \ref{lem:newton}, whose assumptions contain no gap condition, gives the exact Newton representation.

The logarithmic derivative is
\[
r\frac{d}{dr}\log c(r)
=-\frac{2(\mu+2)r^2}{(n+tr^2)(1+r^2)}.
\]
It is bounded, so $c$ also satisfies a bound of the form \eqref{eq:coeff}. Set
\[
F(s)=\frac{2(\mu+2)s}{(n+ts)(1+s)},\qquad
F'(s)=\frac{2(\mu+2)(n-ts^2)}{(n+ts)^2(1+s)^2}.
\]
Thus $F$ attains its maximum at $s=\sqrt{n/t}$. Since $\mu+2=n-t$, this maximum is
\[
F(\sqrt{n/t})=\frac{2(n-t)}{(\sqrt n+\sqrt t)^2}
=\frac{2(\sqrt n-\sqrt t)}{\sqrt n+\sqrt t}.
\]
This proves \eqref{eq:counterdefect}. Any admissible $\varepsilon_a,\varepsilon_b$ are at least $\mathfrak d_c$, and the Newton kernel has zero kernel defect. Hence the left side of \eqref{eq:countergap} is the largest admissible gap.

To prove its sign, put $z=\sqrt{t/n}>0$. Then $\mu=n-2-nz^2$ and $\mathfrak d_c=2(1-z)/(1+z)$. Therefore
\[
\begin{aligned}
2\mu-n+2-\mu\mathfrak d_c
&=\frac{4\mu z}{1+z}-(n-2)\\
&=-(n-2)(2z-1)^2-\frac{8z^3}{1+z}<0.
\end{aligned}
\]
Since $p+1=2(\mu+1)/\mu$, this proves \eqref{eq:countergap}.
\end{proof}

\begin{remark}
Proposition \ref{prop:counterexample} gives an explicit upper bound for the radial infimum \eqref{eq:radialinfimum}. The next section reduces it to the exact value \eqref{eq:radialsharp}. The construction in this proposition concerns a scalar Newton equation with equal coefficients. Section~\ref{sec:systems} treats the combined coefficient loss for unequal powers and nonzero weights. Optimality for general perturbed kernels is not asserted.
\end{remark}

\begin{remark}
For two unequal kernels, there is no single interaction measure of the form \eqref{eq:pi} with both energy marginals. The cancellation used in Proposition \ref{prop:energy} would require an additional comparison between the two interactions.
\end{remark}

\section{Sharp thresholds for scalar Newton equations}\label{sec:radial}

Throughout this section, $n,p$ satisfy \eqref{eq:radialrange}. Write
\[
N=n-2,\qquad \mu=\frac2{p-1},\qquad
d_*=2-\frac{N(p-1)}2,\qquad
\lambda=N(p-1)-2.
\]
Then
\[
\frac N2<\mu<N,\qquad 0<d_*<1,\qquad 0<\lambda<2.
\]
As in \eqref{eq:radialinfimum}, the coefficient bounds may depend on $(c,U)$. The scalar exponent $\mu$ equals both scaling exponents when $p=q$, $\alpha=\beta=0$, and $\rho=2$.

\subsection{The exact defect threshold}

\begin{proposition}\label{prop:radialendpoint}
Every $(c,U)\in\mathcal A_{n,p}$ satisfies $D(c)>d_*$.
\end{proposition}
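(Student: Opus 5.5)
The plan is a direct radial Pohozaev (flux) argument that handles $D(c)\le d_*$ in one stroke. Corollary~\ref{cor:pde} applied with $p=q$, $\alpha=\beta=0$, $a_0=b_0=c$ and $\varepsilon_a=\varepsilon_b=D(c)$ already excludes $D(c)<d_*$. Indeed, $2(n-\varepsilon)/(p+1)>n-2$ is equivalent to $\varepsilon<n-N(p+1)/2=d_*$. So the real content is the endpoint $D(c)=d_*$, and the flux argument covers it together with the subcritical case.

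\emph{The flux and its monotonicity.} For a radial pair $(c,U)\in\mathcal A_{n,p}$, write $U'=dU/dr$ and set
\[
P(r)=r^n\Bigl(\frac{U'^2}{2}+\frac{c\,U^{p+1}}{p+1}\Bigr)+\frac N2\,r^{n-1}UU'.
\]
Using $(r^{n-1}U')'=-r^{n-1}cU^p$, a direct differentiation gives
\[
P'(r)=r^{n-1}U^{p+1}\Bigl[\Bigl(\frac n{p+1}-\frac N2\Bigr)c+\frac{rc'}{p+1}\Bigr].
\]
Assume $D(c)\le d_*$. Then $rc'/c\ge-d_*$, so the bracket is at least
\[
\frac{c}{p+1}\Bigl(n-d_*-\frac{N(p+1)}2\Bigr)=0 .
\]
Hence $P$ is nondecreasing. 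Since $U\in C^2$ near the origin, $P(0^+)=0$.

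\emph{$P$ becomes strictly positive.} If $P'\equiv0$, then, since $U>0$, we must have $rc'/c\equiv-d_*$ on $(0,\infty)$. That forces $c=\kappa r^{-d_*}$, which is unbounded at $0$ because $d_*>0$. This contradicts $c\le c_+$. Hence there is $r_0$ with $P(r)\ge P(r_0)>0$ for all $r\ge r_0$.

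\emph{$P$ vanishes at infinity.} By Lemma~\ref{lem:newton}, $U=K*(cU^p)$, and $U$ is radially nonincreasing since it is superharmonic and radial. The lower kernel bound on $B_r$ gives
\[
U(r)\ge c\,r^{2-n}\int_{B_r}cU^p\ge c\,r^2U(r)^p,
\]
so $U(r)\le Cr^{-\mu}$. Proposition~\ref{prop:mass}, or directly this decay, gives
\[
r^{n-1}|U'(r)|=\int_{B_r}cU^p\le Cr^{N-\mu}.
\]
Substituting these bounds, each of the three terms of $P$ is $O(r^{N-2\mu})$:
\[
r^nU'^2\le Cr^{2-n}r^{2N-2\mu}=Cr^{N-2\mu},
\]
\[
r^nU^{p+1}\le Cr^{n-\mu(p+1)}=Cr^{N-2\mu},
\]
\[
r^{n-1}U|U'|\le Cr^{N-2\mu}.
\]
Since $\mu>N/2$, i.e.\ $p<(n+2)/(n-2)$, these tend to zero, so $P(r)\to0$ as $r\to\infty$. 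This contradicts the previous step. Therefore $D(c)>d_*$.

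The main obstacle I expect is the decay step, i.e.\ making the boundary terms of the flux vanish without any assumed decay or energy condition. This is exactly where the Newton representation of Lemma~\ref{lem:newton} and the subcriticality $p<(n+2)/(n-2)$ are used. The algebra of $P'$ is routine, and the nonattainment comes from the fact that the equality coefficient $r^{-d_*}$ is not bounded.
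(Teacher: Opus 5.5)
Your proof is correct and follows the paper's argument: the same radial Pohozaev function $P$, the same derivative identity $P'=\frac{r^{n-1}cU^{p+1}}{p+1}\bigl(d_*+\frac{rc'}{c}\bigr)$, and the same boundary limits $P(0^+)=0$ and $P(\infty)=0$, which come from $U\le Cr^{-\mu}$, $|U'|\le Cr^{-\mu-1}$ and $N-2\mu<0$. The differences are minor. You obtain the decay of $U$ from the Newton representation and the lower kernel bound, whereas the paper integrates $-U'\ge (c_-/n)rU^p$ directly. You obtain strictness from the rigidity $P\equiv0\Rightarrow c=\kappa r^{-d_*}$, whereas the paper notes that $rc'/c\to0$ at the origin, so $P'>0$ near $0$; your rigidity argument is the one the paper itself uses in Lemma~\ref{lem:weakendpoint}.
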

\begin{proof}
The radial equation and regularity at zero give
\[
-r^{n-1}U'(r)=\int_0^r s^{n-1}c(s)U(s)^p\,ds>0.
\]
Thus $U$ is decreasing. The lower coefficient bound gives
\[
-U'(r)\ge\frac{c_-}{n}rU(r)^p,\qquad
\frac{d}{dr}U(r)^{1-p}\ge\frac{(p-1)c_-}{n}r.
\]
Integration from zero to $r$ yields
\[
U(r)\le
\left(U(0)^{1-p}+\frac{(p-1)c_-}{2n}r^2\right)^{-1/(p-1)}.
\]
Since $\mu p=\mu+2$ and $n-\mu p=N-\mu>0$, the upper coefficient bound and the flux formula give, for $r\ge1$,
\[
|U'(r)|\le Cr^{1-n}
\left(1+\int_1^r s^{n-1-\mu p}\,ds\right)
\le Cr^{-\mu-1}.
\]
Consequently
\begin{equation}\label{eq:radialdecay}
U(r)\le Cr^{-\mu},\qquad
|U'(r)|\le Cr^{-\mu-1}\qquad(r\ge1).
\end{equation}
The constant in \eqref{eq:radialdecay} depends only on $n,p,c_-,c_+$ and $U(0)$. No derivative bound on $c$ was used.

Define the radial Pohozaev quantity
\begin{equation}\label{eq:radialpohozaev}
P(r)=r^n\left(\frac{U'(r)^2}{2}
+\frac{c(r)U(r)^{p+1}}{p+1}\right)
+\frac N2r^{n-1}U(r)U'(r).
\end{equation}
Differentiate \eqref{eq:radialpohozaev} and use $U^{\prime\prime}=-(n-1)U^{\prime}/r-cU^p$. The $cU^pU^{\prime}$ terms cancel. Since
\[
\frac n2-(n-1)+\frac N2=0,\qquad
\frac n{p+1}-\frac N2=\frac{d_*}{p+1},
\]
we obtain
\begin{equation}\label{eq:radialpohozaevderivative}
P'(r)=\frac{r^{n-1}c(r)U(r)^{p+1}}{p+1}
\left(d_*+\frac{rc'(r)}{c(r)}\right).
\end{equation}
At zero, $U'(r)=O(r)$ and $c,U$ are bounded. Hence $P(r)\to0$. At infinity, \eqref{eq:radialdecay} gives
\[
|P(r)|\le Cr^{N-2\mu}\longrightarrow0,
\]
since $N-2\mu<0$.

Suppose $D(c)\le d_*$. Formula \eqref{eq:radialpohozaevderivative} gives $P'\ge0$. Moreover, $rc'(r)/c(r)\to0$ as $r\downarrow0$, because $c$ is $C^1$ and $c(0)>0$. Since $d_*>0$, $P'>0$ for all sufficiently small positive $r$. This contradicts the two zero limits of $P$. Therefore $D(c)>d_*$.
\end{proof}

\subsection{Near-critical constructions}\label{subsec:scalarconstruction}

We next construct coefficients whose defects approach $d_*$. Put $\tau=\log r$ and use a dot for differentiation in $\tau$. For a decreasing positive radial solution, set
\[
y=-\frac{\dot U}{U},\qquad
z=cr^2U^{p-1},\qquad
d=-\frac{d}{d\tau}\log c.
\]
The radial equation is equivalent to the Emden--Fowler system
\begin{equation}\label{eq:efsystem}
\dot y=y^2-Ny+z,\qquad
\dot z=(2-d-(p-1)y)z.
\end{equation}
Indeed, $\dot U=-yU$ gives
$-\Delta U=r^{-2}U(\dot y+y(N-y))$.
The second identity follows by logarithmic differentiation of $z$.

\begin{lemma}\label{lem:phaseconstruction}
Set
\[
h_-=1+\frac2N,\qquad h_*=\frac{p+1}{2},\qquad
h_+=p-\frac2N.
\]
Let $h\in C^\infty(\mathbb R)$ satisfy $\inf h>1$, with $h=h_-$ for all sufficiently negative arguments and $h=h_+$ for all sufficiently positive arguments. Define
\begin{equation}\label{eq:phaseparametrization}
\begin{aligned}
\tau(s)&=\int_0^s\frac{d\xi}{N(h(\xi)-1)},\\
y(\tau(s))&=\frac{Ne^s}{1+e^s},\\
z(\tau(s))&=h(s)y(\tau(s))(N-y(\tau(s))).
\end{aligned}
\end{equation}
Then
\begin{equation}\label{eq:inverseprofile}
U(r)=\exp\left(-\int_{-\infty}^{\log r}y(t)\,dt\right),
\qquad c(r)=r^{-2}U(r)^{1-p}z(\log r)
\end{equation}
defines a pair in $\mathcal A_{n,p}$. Both functions are smooth on $\mathbb R^n$, and
$\lim_{r\to\infty}r^NU(r)\in(0,\infty)$.
Its signed logarithmic defect is
\begin{equation}\label{eq:phasecontrol}
d(\tau(s))
=d_*+(h(s)-h_*)(2y-N)
-N\frac{h(s)-1}{h(s)}h'(s).
\end{equation}
\end{lemma}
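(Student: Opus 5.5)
The plan is to verify, in order, that the parametrization is a legitimate change of variables, that the pair $(y,z)$ defined by \eqref{eq:phaseparametrization} solves the Emden--Fowler system \eqref{eq:efsystem} with the signed defect given by \eqref{eq:phasecontrol}, and that the functions \eqref{eq:inverseprofile} are smooth at the origin and at infinity with the stated properties.

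\emph{Change of variables.} Since $\inf h>1$, we have $d\tau/ds=1/(N(h-1))$, which is positive and bounded above and below. Hence $s\mapsto\tau(s)$ is a smooth increasing bijection of $\mathbb R$, and $y,z$ are smooth functions of $\tau$ with $0<y<N$ and $z>0$.

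\emph{First equation and the formula for $d$.} Using $dy/ds=y(N-y)/N$, we get
\[
\dot y=\frac{dy/ds}{d\tau/ds}=(h-1)y(N-y),
\]
which equals $y^2-Ny+z$ by the definition of $z$. For the second equation, define $d$ by $\dot z=(2-d-(p-1)y)z$, so that $d=2-(p-1)y-\dot z/z$. Differentiation gives
\[
\frac{\dot z}{z}=N(h-1)\left(\frac{h'}{h}+\frac{N-2y}{N}\right).
\]
Substituting and using $2-(p-1)y-(h-1)(N-2y)=d_*+(h-h_*)(2y-N)$ yields \eqref{eq:phasecontrol}; checking this identity is a short calculation with $d_*=2-N(p-1)/2$ and $h_*=(p+1)/2$. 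By the equivalence recorded after \eqref{eq:efsystem}, $U$ then satisfies $-\Delta U=cU^p$ on $r>0$, with $c=-$ and $d=-\frac{d}{d\tau}\log c$ as defined.

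\emph{Behavior at the ends.} As $s\to-\infty$ we have $h=h_-$, so $\tau=s/2$ exactly, and $y=Ne^{2\tau}/(1+e^{2\tau})=Nr^2/(1+r^2)$. This is smooth in $r^2$, so $\int_{-\infty}^{\log r}y$ converges and $U$ is a smooth function of $r^2$ near $0$. Moreover, $z/(r^2U^{p-1})$ with $z=h_-y(N-y)=O(r^2)$ is likewise smooth in $r^2$ with a positive value at $0$. Hence $c$ and $U$ extend smoothly and positively to $\mathbb R^n$. As $s\to+\infty$, $h=h_+$ and $N(h_+-1)=N(p-1)-2=\lambda>0$, so $\tau=s/\lambda+\text{const}$ and $N-y=N/(1+e^s)=O(r^{-\lambda})$. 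This decay is integrable in $\tau$, so $r^NU(r)=\exp(\int(N-y))$ up to a constant has a finite positive limit. Then $z=h_+y(N-y)\sim Cr^{-\lambda}$, and $r^2U^{p-1}\sim Cr^{2-N(p-1)}=Cr^{-\lambda}$, so $c$ tends to a positive limit. Since $c$ is continuous and positive with positive limits at both ends, it is bounded above and below. The quantity $rc'/c=-d$ is bounded because $y$ and $h'$ are bounded, which gives membership in $\mathcal A_{n,p}$.

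The main obstacle is the smoothness of $c$ at the origin: one must check that $\log c$ is a smooth function of $r^2$ with $c(0)>0$. This follows from the exact formulas in the region $h=h_-$, and the rest of the argument is bookkeeping.
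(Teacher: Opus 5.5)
Your proposal is correct and follows essentially the same route as the paper: the same change of variables, the same check $\dot y+y(N-y)=z$, the same computation of $\dot z/z$ leading to \eqref{eq:phasecontrol}, and the same use of the exact profiles where $h=h_\pm$ to get smoothness at the origin, positive coefficient limits at both ends, and a finite positive limit of $r^NU(r)$. One small slip: where $h=h_-$ you only have $\tau=s/2+c_0$ for some constant $c_0$, because the normalization $\tau(0)=0$ is imposed at a point where $h$ need not equal $h_-$; near the origin this gives $y=Nb_-r^2/(1+b_-r^2)$ with $b_-=e^{-2c_0}>0$, which leaves your smoothness argument unchanged.
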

\begin{proof}
The function $h$ is bounded above and bounded away from one. Thus $\tau(s)$ is a smooth increasing bijection of $\mathbb R$. Differentiating \eqref{eq:phaseparametrization} gives
\[
\frac{dy}{ds}=\frac{y(N-y)}N,\qquad
\frac{ds}{d\tau}=N(h-1),\qquad
\dot y=(h-1)y(N-y).
\]
Hence $\dot y+y(N-y)=hy(N-y)=z$. The first equation in \eqref{eq:efsystem} holds.

Near $s=-\infty$, $d\tau/ds=1/2$. Consequently there is $b_->0$ such that, for small $r$,
\[
y=\frac{Nb_-r^2}{1+b_-r^2},\qquad
U=(1+b_-r^2)^{-N/2}.
\]
This also proves convergence of the integral in \eqref{eq:inverseprofile}. Direct substitution yields
\[
c(r)=nNb_-(1+b_-r^2)^{-d_*}
\]
for small $r$. Thus $U$ and $c$ extend smoothly and positively across zero.

Near $s=+\infty$, $N(h_+-1)=\lambda$. There are constants $b_+,L_+>0$ such that, for large $r$,
\[
y=\frac{Nb_+r^\lambda}{1+b_+r^\lambda},\qquad
U=L_+(1+b_+r^\lambda)^{-N/\lambda}.
\]
Since $N(p-1)=\lambda+2$, formula \eqref{eq:inverseprofile} gives
\[
c(r)=N(N+\lambda)b_+L_+^{1-p}
r^{\lambda-2}(1+b_+r^\lambda)^{(2-\lambda)/\lambda}.
\]
The coefficient has a finite positive limit at infinity. The same formula gives $r^NU(r)\to L_+b_+^{-N/\lambda}\in(0,\infty)$. Together with positivity on the intermediate compact annulus, the two tail formulas give $0<c_-\le c\le c_+<\infty$.

The first identity in \eqref{eq:efsystem} and \eqref{eq:inverseprofile} imply
$-\Delta U=cU^p$ for $r>0$. Smoothness extends the equation to zero. To compute the logarithmic derivative, use
\[
\frac{\dot z}{z}
=N(h-1)\frac{h'}h+(h-1)(N-2y).
\]
Since $\log c=-2\tau+(1-p)\log U+\log z$,
\[
d=2-(p-1)y-(h-1)(N-2y)-N(h-1)\frac{h'}h.
\]
The identity $d_*=2-N(p-1)/2$ reduces this to \eqref{eq:phasecontrol}. Since $h^{\prime}$ is bounded and compactly supported, \eqref{eq:phasecontrol} gives $\sup|d|<\infty$. Thus $(c,U)\in\mathcal A_{n,p}$.
\end{proof}

Along the curve of Lemma \ref{lem:phaseconstruction}, formula \eqref{eq:radialpohozaev} reads
\[
P(r)=\frac{r^NU(r)^2}{p+1}y(N-y)(h(s)-h_*),
\qquad r=e^{\tau(s)}.
\]
Thus $h=h_*$ is the zero level of the Pohozaev quantity.

\begin{proof}[Proof of Theorem \ref{thm:radialthreshold}]
Proposition \ref{prop:radialendpoint} proves the lower bound and nonattainment. We construct an approximating family.

Choose a fixed smooth nondecreasing function $\vartheta:\mathbb R\to[0,1]$ with $\vartheta=0$ on $(-\infty,0]$ and $\vartheta=1$ on $[1,\infty)$. For $T\ge1$, define
\begin{align*}
h_T(s)=h_*&+(h_--h_*)\left[1-\vartheta\left(\frac{s+2T}{T}\right)\right]\\
&-(h_*-h_+)\vartheta\left(\frac{s-T}{T}\right).
\end{align*}
Then $h_T=h_-$ on $s\le-2T$, $h_T=h_*$ on $[-T,T]$, and $h_T=h_+$ on $s\ge2T$. Moreover,
\[
h_+>1,\qquad h_--h_*=h_*-h_+=\frac{d_*}N,\qquad
-\frac C T\le h_T'\le0.
\]
Apply Lemma \ref{lem:phaseconstruction}. Denote the resulting pair by $(c_T,U_T)$.

To estimate \eqref{eq:phasecontrol}, write
\[
b_T(s)=d_*+(h_T(s)-h_*)(2y-N).
\]
If $s\le0$, then $y\le N/2$ and $h_*\le h_T\le h_-$. Hence
\[
\frac{2d_*y}N\le b_T(s)\le d_*.
\]
If $s\ge0$, then $y\ge N/2$ and $h_+\le h_T\le h_*$. Thus
\[
\frac{2d_*(N-y)}N\le b_T(s)\le d_*.
\]
Since $h_T'\le0$, the last term in \eqref{eq:phasecontrol} is nonnegative. The two transition intervals are disjoint. Hence
\[
|h_T'|\le\frac{d_*\|\vartheta'\|_\infty}{NT},\qquad
N\frac{h_T-1}{h_T}\le\frac{2N}{n}.
\]
Its absolute value is therefore at most $C/T$, where
$C=2d_*\|\vartheta'\|_\infty/n$. Thus
\begin{equation}\label{eq:thresholdapproximation}
0\le-\frac{rc_T'(r)}{c_T(r)}
\le d_*+\frac C T\qquad(r>0).
\end{equation}
In particular, $c_T$ is nonincreasing. Given $\varepsilon>0$, choose
$T\ge\max\{1,C/\varepsilon\}$.
Formula \eqref{eq:thresholdapproximation} gives $D(c_T)\le d_*+\varepsilon$. Proposition \ref{prop:radialendpoint} also gives $D(c_T)>d_*$. This proves \eqref{eq:radialsharp}.

To normalize the coefficient at zero, replace
\[
c_T\ \hbox{by }\ \frac{c_T}{c_T(0)},\qquad
U_T\ \hbox{by }\ c_T(0)^{1/(p-1)}U_T.
\]
The equation and the logarithmic defect are unchanged. This gives all the properties stated in the theorem.
\end{proof}

\begin{remark}\label{rem:contrast}
On $-T\le s\le T$, \eqref{eq:phasecontrol} gives $d=d_*$. The corresponding $\tau$ interval has length $2T/(2-d_*)$. Since $d\ge0$,
\[
\log\frac{c_T(0)}{c_T(\infty)}
=\int_{\mathbb R}d(\tau)\,d\tau
\ge\frac{2d_*T}{2-d_*}.
\]
After the normalization $c_T(0)=1$, this construction has $c_T(\infty)\to0$. Theorem \ref{thm:contrast} shows that unbounded coefficient ratios occur for every sequence whose defects approach $d_*$. Proposition \ref{prop:contrastconstruction} gives an upper bound for the ratio of this particular family.
\end{remark}

For example, if $n=3$ and $p=4$, Proposition \ref{prop:counterexample} gives a coefficient with defect one. Theorem \ref{thm:radialthreshold} gives
\[
D_{\mathrm{crit}}(3,4)=\frac12,
\]
and, for every $\varepsilon>0$, a smooth decreasing coefficient with a positive entire solution and defect in $(1/2,1/2+\varepsilon]$. The endpoint itself is excluded by \eqref{eq:radialpohozaevderivative}. This sharpness concerns the radial scalar subclass with equal coefficients. It does not assert optimality of the kernel defect or of all losses for unequal powers.

\begin{remark}\label{rem:radialcomparison}
To compare the radial criteria, define
\[
G_c(r)=\frac{r^nc(r)}{p+1}
-\frac N2\int_0^r s^{n-1}c(s)\,ds.
\]
Then
\[
G_c'(r)=\frac{r^{n-1}c(r)}{p+1}
\left(d_*+\frac{rc'(r)}{c(r)}\right),\qquad G_c(0)=0.
\]
Theorem 2 of \cite{KYY} assumes $G_c\ge0$ and $G_c\not\equiv0$. Every positive initial value then gives a crossing solution. Theorem 3 assumes $G_c\le0$ and $G_c\not\equiv0$. Every positive initial value then gives a slowly decaying solution. If $D(c)\le d_*$, then $G_c'\ge0$ and
\[
\lim_{r\downarrow0}\frac{G_c(r)}{r^n}
=\frac{d_*c(0)}{n(p+1)}>0.
\]
Their first criterion applies. Proposition \ref{prop:radialendpoint} is a direct proof of this consequence.

Theorem 1 of \cite{KYY} assumes $G_c\not\equiv0$ and a finite $R_1\ge0$ such that
\[
G_c\ge0\quad\hbox{on }(0,R_1),\qquad
G_c'\le0\quad\hbox{on }(R_1,\infty).
\]
It classifies the shooting family into three types: all crossing, all slowly decaying, or a unique fast-decay initial value separating these two regimes. Theorem 1 of \cite{KYYm} extends this criterion to the quasilinear equation; its Theorems A and B give the corresponding one-sign criteria. These statements concern a prescribed coefficient. Our construction varies the coefficient and controls $D(c)$. Its tail has $rc_T'/c_T\to0$, so $G_{c_T}'>0$ for all sufficiently large $r$. The displayed tail assumption therefore fails.

Yanagida and Yotsutani \cite{YY} give further shooting classifications. The condition in \cite{Yanagida} requires $rc^{\prime}(r)/c(r)$ to be nonincreasing. Our family satisfies
\[
\lim_{r\downarrow0}\frac{rc_T^{\prime}(r)}{c_T(r)}
=\lim_{r\to\infty}\frac{rc_T^{\prime}(r)}{c_T(r)}=0,
\qquad \frac{rc_T^{\prime}(r)}{c_T(r)}<0\quad(r>0).
\]
It therefore fails that condition.
Theorem \ref{thm:contrast} imposes a coefficient ratio and a one-sided derivative bound.
It makes no uniqueness claim for a prescribed coefficient.
\end{remark}

\subsection{Fixed coefficient ratios and limiting profiles}\label{sec:contrast}

We retain $N,\mu,d_*,\lambda$ from the preceding subsections. All constants in this subsection may depend on $n,p$. Dependence on a prescribed ratio $M$ is indicated separately.

\begin{lemma}\label{lem:radialcompactness}
Let $(c_j,U_j)\in\mathcal A_{n,p}$ satisfy
\[
1\le c_j\le M,\qquad U_j(0)=1,\qquad D(c_j)\le D_0,
\]
where $M,D_0$ are fixed. After passage to a subsequence, there are functions $c,U$ such that, for every $R<\infty$,
\begin{equation}\label{eq:radialcompactness}
c_j\to c\quad\hbox{in }L^1(0,R),\qquad
U_j\to U\quad\hbox{in }C^1([0,R]).
\end{equation}
The function $c$ belongs to $BV_{\mathrm{loc}}((0,\infty))$, with $1\le c\le M$ almost everywhere. Moreover, $U(0)=1$, $U>0$, and
\begin{equation}\label{eq:limitflux}
-r^{n-1}U'(r)=\int_0^r s^{n-1}c(s)U(s)^p\,ds
\qquad(r>0).
\end{equation}
Uniformly in $j$, and also for the limit,
\begin{equation}\label{eq:normalizeddecay}
\begin{aligned}
0<U_j(r)&\le
\left(1+\frac{p-1}{2n}r^2\right)^{-1/(p-1)},\\
|U_j'(r)|&\le C(n,p,M)r^{-\mu-1}\qquad(r\ge1).
\end{aligned}
\end{equation}
If $D(c_j)\le d_j$ and $d_j\to d<\infty$, then $r^dc(r)$ has a nondecreasing representative.
\end{lemma}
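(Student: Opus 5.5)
The plan is to derive uniform bounds first, then extract limits by Arzel\`a--Ascoli for $U_j$ and Helly's selection theorem for $c_j$, using the monotonicity of $r^{D_0}c_j(r)$ in place of any derivative bound.

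\emph{Uniform bounds.} The normalizations $U_j(0)=1$ and $c_j\ge1$ allow the first part of the proof of Proposition~\ref{prop:radialendpoint} to be repeated verbatim with $c_-=1$. This gives the comparison $\frac{d}{dr}U_j^{1-p}\ge\frac{p-1}{n}r$, and integration yields the explicit first bound in \eqref{eq:normalizeddecay}. For the derivative, use the flux formula with $c_j\le M$ and $U_j\le Cr^{-\mu}$:
\[
|U_j'(r)|\le Mr^{1-n}\int_0^r s^{n-1}U_j^p\,ds\le C(n,p,M)r^{-\mu-1}\qquad(r\ge1),
\]
since $n-\mu p=N-\mu>0$. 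On $[0,1]$ the flux formula gives $|U_j'(r)|\le Mr/n$. Hence $U_j$ and $U_j'$ are uniformly bounded on compact intervals.

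\emph{Compactness.} From the radial equation $U_j''=-(n-1)U_j'/r-c_jU_j^p$ together with $|U_j'|\le Mr/n$, the second derivatives are uniformly bounded on $[0,R]$. Arzel\`a--Ascoli then gives $C^1([0,R])$ convergence along a diagonal subsequence. For the coefficients, set $D_0'=\max(D_0,0)$. The condition $D(c_j)\le D_0$ means $rc_j'/c_j\ge-D_0$, so $(\log c_j+D_0\log r)'\ge0$ and $r^{D_0}c_j(r)$ is nondecreasing. On $[\delta,R]$ this function is monotone and uniformly bounded by $R^{D_0}M$. Helly's selection theorem therefore gives pointwise convergence everywhere off a countable set, hence $L^1$ convergence on $[\delta,R]$. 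Near zero, $L^1(0,\delta)$ convergence follows from $|c_j|\le M$ by a diagonal argument over $\delta\downarrow0$. The limit $c=r^{-D_0}\times(\text{monotone})$ is $BV_{\mathrm{loc}}((0,\infty))$ and satisfies $1\le c\le M$ almost everywhere.

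\emph{Passing to the limit.} In the flux identity, $c_jU_j^p\to cU^p$ in $L^1(0,r)$, because $c_j\to c$ in $L^1$ and $U_j^p\to U^p$ uniformly. This yields \eqref{eq:limitflux}. The equality $U(0)=1$ and the bounds \eqref{eq:normalizeddecay} persist under pointwise limits. Positivity of $U$ follows because $U$ is nonincreasing by \eqref{eq:limitflux} and is a locally $C^1$ solution. If $U$ vanished at some first point, then $U'$ would stay bounded, and a standard comparison with $U\le U(0)$ rules this out. Even more simply, one can use the lower bound $U_j\ge$ obtained from $-U_j'\le Mr U_j^p/n$ integrated, which gives
\[
U_j(r)\ge\Bigl(1+\tfrac{(p-1)M}{2n}r^2\Bigr)^{-1/(p-1)}>0,
\]
uniformly in $j$.

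\emph{Final claim.} When $d_j\to d$, each $r^{d_j}c_j$ is nondecreasing, and on $[\delta,R]$ we have $r^{d_j}/r^{d}\to1$ uniformly. So $r^dc_j(r)$ converges in $L^1_{\mathrm{loc}}$ to $r^dc$ and is a uniformly small perturbation of nondecreasing functions. The almost-everywhere limit of nondecreasing functions is almost everywhere equal to a nondecreasing function; take its right-continuous version as the representative. The main delicate point is extracting $L^1$ convergence without derivative bounds on $c_j$, and Helly's theorem combined with monotonicity handles it.
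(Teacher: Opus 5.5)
Apart from one step, your argument is the paper's. You use Helly selection for the nondecreasing functions $r^{D_0}c_j$, the bound $|U_j''|\le 2M$ with Arzel\`a--Ascoli, $L^1$ convergence of $c_jU_j^p$ in the flux identity, and the decay argument of Proposition~\ref{prop:radialendpoint} with $c_-=1$, $c_+=M$. For the last assertion you use that an a.e.\ limit of nondecreasing functions agrees a.e.\ with a nondecreasing one, whereas the paper passes nonnegativity of distributional derivatives to the limit; both work.

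The step that fails is the positivity $U>0$ of the limit. Your ``even more simply'' bound rests on $-U_j'\le MrU_j^p/n$, but that inequality goes the wrong way. Since $U_j$ is decreasing, $U_j(s)\ge U_j(r)$ for $s\le r$, so the flux formula gives $\int_0^r s^{n-1}c_jU_j(s)^p\,ds\ge r^nU_j(r)^p/n$. This is exactly how the upper bound in \eqref{eq:normalizeddecay} is obtained. The resulting lower bound $U_j\gtrsim r^{-\mu}$ is in fact false. Any pair with $\lim_{r\to\infty}r^NU(r)\in(0,\infty)$ violates it because $\mu<N$; the pairs of Theorem~\ref{thm:radialthreshold}, after the normalization in the proof of Theorem~\ref{thm:contrast}, are examples. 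Your first sketch does not reach a contradiction either: a nonnegative nonincreasing $C^1$ function with bounded derivative and $U\le U(0)$ can vanish at a point and stay zero afterwards.

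The missing idea is to apply the flux identity at the zero itself. Suppose $U(r_0)=0$ for some $r_0>0$. The right side of \eqref{eq:limitflux} at $r_0$ is strictly positive, since $U(0)=1$, $U$ is continuous and $c\ge1$. Hence $U'(r_0)<0$, so $U<0$ just beyond $r_0$, contradicting $U\ge0$ as a limit of positive functions.

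If you want a uniform lower bound instead, the correct one is of order $r^{-N}$. Put $r_1=\sqrt{n/M}$; then $U_j\ge1/2$ on $[0,r_1]$. So the nondecreasing flux $\Phi_j(r)=\int_0^rs^{n-1}c_jU_j^p\,ds$ is at least $2^{-p}r_1^n/n$ for $r\ge r_1$. Therefore $U_j(r)\ge\int_r^\infty s^{1-n}\Phi_j(s)\,ds\ge\Phi_j(r)\,r^{2-n}/N$, and this bound passes to the limit.
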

\begin{proof}
The functions $F_j(r)=r^{D_0}c_j(r)$ are nondecreasing, since
\[
F_j'(r)=r^{D_0-1}c_j(r)
\left(D_0+\frac{rc_j'(r)}{c_j(r)}\right)\ge0.
\]
On $[a,b]\subset(0,\infty)$, we have $0\le F_j\le Mb^{D_0}$. A diagonal subsequence converges at every positive rational point. Monotonicity gives a nondecreasing limit $F$ and convergence at its continuity points. There are at most countably many discontinuities. Hence
\[
c_j\to c=r^{-D_0}F\quad\hbox{almost everywhere},\qquad
1\le c\le M.
\]
Dominated convergence gives $c_j\to c$ in $L^1(0,R)$.
Since $F$ is locally of bounded variation, so is $c$ on $(0,\infty)$.

The radial flux formula gives $0<U_j\le1$ and
\[
|U_j'(r)|\le\frac Mn r,\qquad
|U_j''(r)|\le\frac{n-1}{r}|U_j'(r)|+M\le2M
\quad(r>0).
\]
Arzel\`a--Ascoli and a diagonal subsequence give the second convergence in \eqref{eq:radialcompactness}. The limit satisfies $U\ge0$ and $U(0)=1$. For each fixed $R$,
\[
\int_0^R |c_jU_j^p-cU^p|\,dr
\le \|c_j-c\|_{L^1(0,R)}
+MR\|U_j^p-U^p\|_{L^\infty(0,R)}\longrightarrow0.
\]
We may therefore pass to the limit in the flux formula. This proves \eqref{eq:limitflux}.

Suppose $U(r_0)=0$ for some $r_0>0$. Since $U(0)=1$ and $c\ge1$, the integral in \eqref{eq:limitflux} at $r_0$ is strictly positive. Thus $U'(r_0)<0$, contrary to nonnegativity of $U$ near $r_0$. Therefore $U>0$. The proof of \eqref{eq:radialdecay}, now with $c_-=1$, $c_+=M$, and $U_j(0)=1$, gives \eqref{eq:normalizeddecay}. These estimates pass to the limit. The flux formula also gives $U'(0)=0$ and $U\in W^{2,\infty}_{\mathrm{loc}}([0,\infty))$.

Finally, $r^{d_j}c_j(r)$ is nondecreasing. On any compact interval in $(0,\infty)$ it converges in $L^1$ to $r^dc(r)$. For every nonnegative $\varphi\in C_c^\infty((0,\infty))$,
\[
-\int_0^\infty r^dc(r)\varphi'(r)\,dr
=\lim_{j\to\infty}\left(-\int_0^\infty r^{d_j}c_j(r)\varphi'(r)\,dr\right)\ge0.
\]
Thus the distributional derivative of $r^dc$ is nonnegative. This proves the last assertion.
\end{proof}

\begin{lemma}\label{lem:weakendpoint}
Let $1\le c\le M$ almost everywhere on $(0,\infty)$, where $M<\infty$. Suppose that $r^{d_*}c(r)$ is nondecreasing. There is no positive $U\in C^1([0,\infty))$ with $U(0)=1$ satisfying \eqref{eq:limitflux}.
\end{lemma}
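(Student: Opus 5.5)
The plan is to run the radial Pohozaev argument of Proposition~\ref{prop:radialendpoint} in the sense of measures, with the monotone function $F(r)=r^{d_*}c(r)$ in place of the smooth coefficient. Fix the nondecreasing representative of $F$ (right-continuous, say), so that $c=r^{-d_*}F$ and $dF$ is a nonnegative Radon measure on $(0,\infty)$. Suppose, for contradiction, that a positive $U\in C^1([0,\infty))$ with $U(0)=1$ satisfies \eqref{eq:limitflux}.

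\emph{Regularity and decay of $U$.} From \eqref{eq:limitflux} and $1\le c\le M$, we get $U'<0$ on $(0,\infty)$, $U'(0)=0$, $|U'(r)|\le Mr/n$, and $U\in W^{2,\infty}_{\mathrm{loc}}([0,\infty))$ with
\[
U''=-\frac{n-1}{r}U'-cU^p\quad\hbox{a.e.}
\]
The proof of \eqref{eq:radialdecay} uses only the two-sided bounds on the coefficient and no derivative of it. Repeating it with $c_-=1$, $c_+=M$ gives $U\le Cr^{-\mu}$ and $|U'|\le Cr^{-\mu-1}$ for $r\ge1$.

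\emph{The Pohozaev function.} Define $P(r)$ by \eqref{eq:radialpohozaev}, with the coefficient term written as
\[
\frac{r^{n-d_*}F(r)U(r)^{p+1}}{p+1}.
\]
This function is locally BV on $(0,\infty)$. The terms not involving $c$ are locally Lipschitz. The coefficient term is the product of the locally Lipschitz function $r^{n-d_*}U^{p+1}$ and the monotone function $F$. The product rule for Lebesgue--Stieltjes measures, where one factor is continuous and of bounded variation, then gives
\[
dP=\Bigl[\ldots\Bigr]dr+\frac{r^{n-d_*}U^{p+1}}{p+1}\,dF .
\]
In the absolutely continuous part, $F$ multiplies $(r^{n-d_*}U^{p+1})'/(p+1)$. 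Substituting the a.e.\ equation for $U''$, the same cancellation as in \eqref{eq:radialpohozaevderivative} occurs. The only difference is that the factor $d_*+rc'/c$ there is replaced by $d_*-d_*=0$, because the power $r^{-d_*}$ has been absorbed into $F$. Hence
\[
dP=\frac{r^{n-d_*}U^{p+1}}{p+1}\,dF\ge0 ,
\]
and for $0<r_1<r_2$,
\[
P(r_2)-P(r_1)=\int_{(r_1,r_2]}\frac{s^{n-d_*}U(s)^{p+1}}{p+1}\,dF(s)\ge0 .
\]
I expect this product-rule step, with $F$ possibly discontinuous, to be the main technical point. If the direct Stieltjes computation is awkward, the fallback is to mollify $\log F$ in the variable $\log r$. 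This keeps $F_\epsilon$ nondecreasing and $c_\epsilon=r^{-d_*}F_\epsilon$ bounded between $1$ and $M$. One then passes to the limit in the integrated identity, using $F_\epsilon\to F$ at continuity points together with weak convergence of $dF_\epsilon$ against the continuous weight.

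\emph{Endpoint limits and conclusion.} As $r\downarrow0$, $c$ is bounded, $U'=O(r)$ and $U$ is bounded, so every term of $P$ is $O(r^n)$ and $P(0^+)=0$. As $r\to\infty$, the decay bounds give $|P(r)|\le Cr^{N-2\mu}$, which tends to $0$ because $\mu>N/2$. Letting $r_1\downarrow0$ and $r_2\to\infty$ therefore gives
\[
\int_{(0,\infty)}\frac{s^{n-d_*}U(s)^{p+1}}{p+1}\,dF(s)=0 .
\]
Since $U>0$, the weight is strictly positive, so $dF=0$ and $F\equiv A$ is constant on $(0,\infty)$. Then $c=Ar^{-d_*}$ almost everywhere, with $A\ge 1$ since $c\ge1$. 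Because $d_*>0$, this is unbounded as $r\downarrow0$, contradicting $c\le M$. Hence no such $U$ exists.
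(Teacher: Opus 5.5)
Your proposal is correct and matches the paper's proof essentially step for step. Both arguments take the decay bounds from the flux formula with $c_-=1$ and $c_+=M$, and derive the measure identity $dP=\frac{r^{n-d_*}U^{p+1}}{p+1}\,d(r^{d_*}c)$ from the product rule with a locally Lipschitz factor. They then use $P(0^+)=P(\infty)=0$ to force $d(r^{d_*}c)=0$, which contradicts $c\le M$. Your mollification fallback is unnecessary.
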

\begin{proof}
The flux formula gives $U'\le0$, $|U'(r)|\le Mr/n$, and
\[
U''=-\frac{n-1}{r}U'-cU^p
\quad\hbox{almost everywhere on }(0,\infty).
\]
Thus $U\in W^{2,\infty}_{\mathrm{loc}}([0,\infty))$. The argument proving \eqref{eq:normalizeddecay} applies to this measurable coefficient. In particular, $U(r)\le Cr^{-\mu}$ and $|U'(r)|\le Cr^{-\mu-1}$ for $r\ge1$.

Choose the right-continuous nondecreasing representative of $F(r)=r^{d_*}c(r)$ and set $c(r)=r^{-d_*}F(r)$. Then $c\in BV_{\mathrm{loc}}((0,\infty))$, and its representative still satisfies $1\le c(r)\le M$. Define $P$ by \eqref{eq:radialpohozaev} with this representative. Products of a locally absolutely continuous function and a locally bounded variation function obey the distributional product rule. Applying it to $r^ncU^{p+1}$ and differentiating the remaining terms gives the measure identity
\begin{equation}\label{eq:weakpohozaev}
\begin{aligned}
\mathrm dP
&=\frac{U^{p+1}}{p+1}
\left(r^n\,\mathrm dc+d_*r^{n-1}c\,\mathrm dr\right)\\
&=\frac{r^{n-d_*}U^{p+1}}{p+1}\,
\mathrm d(r^{d_*}c)\ge0.
\end{aligned}
\end{equation}
Indeed, the absolutely continuous terms containing $cU^pU'$ cancel. The coefficient of $r^{n-1}U'^2$ is $n/2-(n-1)+N/2=0$. The remaining coefficient of $r^{n-1}cU^{p+1}$ is $n/(p+1)-N/2=d_* /(p+1)$. This proves \eqref{eq:weakpohozaev}, including the contribution of any jumps of $c$.

At zero, $U$ is bounded and $U'=O(r)$, so $P(r)\to0$. At infinity,
\[
|P(r)|\le Cr^{N-2\mu}\longrightarrow0.
\]
The right-continuous representative of $P$ is nondecreasing by \eqref{eq:weakpohozaev}. Its two zero limits force $P\equiv0$. On every $[a,b]\subset(0,\infty)$,
\[
\min_{a\le r\le b}\frac{r^{n-d_*}U(r)^{p+1}}{p+1}>0.
\]
Thus \eqref{eq:weakpohozaev} gives
\[
\mathrm d(r^{d_*}c)=0,\qquad c(r)=a r^{-d_*}
\]
for a constant $a>0$. Since $d_*>0$, this contradicts the bounds $1\le c\le M$.
\end{proof}

\begin{proof}[Proof of the strict gap in Theorem \ref{thm:contrast}]
Suppose that \eqref{eq:contrastgap} fails for a fixed finite $M$. Proposition \ref{prop:radialendpoint} gives a sequence $(c_j,U_j)\in\mathcal A_{n,p}$ such that
\[
\mathcal R(c_j)\le M,\qquad d_*<D(c_j)\le d_*+\frac1j.
\]
Put $a_j=\inf_{r\ge0}c_j(r)>0$ and
\[
t_j=\bigl(a_jU_j(0)^{p-1}\bigr)^{-1/2},\qquad
\widetilde U_j(r)=\frac{U_j(t_jr)}{U_j(0)},\qquad
\widetilde c_j(r)=\frac{c_j(t_jr)}{a_j}.
\]
Direct substitution gives
\[
-\Delta\widetilde U_j=\widetilde c_j\widetilde U_j^p,
\quad \widetilde U_j(0)=1,
\quad 1\le\widetilde c_j\le M,
\quad D(\widetilde c_j)=D(c_j).
\]
Lemma \ref{lem:radialcompactness}, with $D_0=d_*+1$, gives a limit $(c,U)$ satisfying \eqref{eq:limitflux}, $U(0)=1$, and $U>0$. Its last assertion, applied with $d_j=d_*+1/j$, shows that $r^{d_*}c$ is nondecreasing. This contradicts Lemma \ref{lem:weakendpoint}. The contradiction proves \eqref{eq:contrastgap}. The argument uses only $n,p,M$; no upper bound on $rc_j'/c_j$ enters it.
\end{proof}

\begin{proposition}\label{prop:contrastconstruction}
Let $(c_T,U_T)$ be the family constructed in the proof of Theorem \ref{thm:radialthreshold}. There are constants $C_1,C_2>0$, depending only on $n,p$, such that, for all $T\ge1$,
\begin{equation}\label{eq:contrastfamily}
\begin{gathered}
d_*<D(c_T)\le d_*+\frac{C_1}{T},\\
\frac{2d_*}{2-d_*}T
\le\log\mathcal R(c_T)
\le\frac{4d_*}{\lambda}T+C_2.
\end{gathered}
\end{equation}
Consequently \eqref{eq:contrastupper} holds for all sufficiently large $M$.
\end{proposition}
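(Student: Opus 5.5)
The plan is to read all four bounds in \eqref{eq:contrastfamily} off the phase parametrization of Lemma~\ref{lem:phaseconstruction}, using the explicit form of $h_T$. Then \eqref{eq:contrastupper} follows by choosing $T$ as a function of $M$.

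\textbf{Step 1: the defect bounds.} The first line of \eqref{eq:contrastfamily} is already contained in the proof of Theorem~\ref{thm:radialthreshold}. Estimate \eqref{eq:thresholdapproximation} gives $D(c_T)\le d_*+C/T$, with $C=2d_*\|\vartheta'\|_\infty/n$ depending only on $n,p$ and the fixed $\vartheta$. Proposition~\ref{prop:radialendpoint} gives $D(c_T)>d_*$. We take $C_1=C$.

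\textbf{Step 2: the ratio as an integral of $d$.} By \eqref{eq:thresholdapproximation}, $c_T$ is nonincreasing and has a positive limit at infinity. Hence $\mathcal R(c_T)=c_T(0)/c_T(\infty)$, and
\[
\log\mathcal R(c_T)=\int_{\mathbb R}d(\tau)\,d\tau .
\]
The normalization $c_T(0)=1$ does not change this ratio. The lower bound is Remark~\ref{rem:contrast}: on $s\in[-T,T]$ we have $d=d_*$ and $d\tau/ds=1/(N(h_*-1))$, so this interval has $\tau$-length $2T/(2-d_*)$. Elsewhere $d\ge0$.

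\textbf{Step 3: the upper bound on $\log\mathcal R(c_T)$.} This is the only step needing computation. Change variables to $s$ with $d\tau=ds/(N(h_T(s)-1))$, and split $d$ as in \eqref{eq:phasecontrol} into $b_T(s)$ and the derivative term.

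The derivative term integrates exactly:
\[
-\int N\frac{h_T-1}{h_T}h_T'\,\frac{ds}{N(h_T-1)}=-\int\frac{h_T'}{h_T}\,ds=\log\frac{h_-}{h_+}.
\]
This is a constant depending only on $n,p$.

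For the $b_T$ part, split the $s$-line into three pieces:
\begin{itemize}
\item On $s\le-2T$ we have $h_T=h_-$ and $N(h_--1)=2$. There $b_T=2d_*y/N=2d_*e^s/(1+e^s)$, whose integral over $(-\infty,-2T]$ is at most $d_*$.
\item On $s\ge2T$ we have $h_T=h_+$ and $N(h_+-1)=\lambda$. There $b_T=2d_*/(1+e^s)$, whose integral against $ds/\lambda$ is at most $d_*/\lambda$.
\item On $[-2T,2T]$ we use $0\le b_T\le d_*$ from the proof of Theorem~\ref{thm:radialthreshold}. Since $h_T\ge h_+$ everywhere, $N(h_T-1)\ge\lambda$. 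This piece is therefore at most $4d_*T/\lambda$.
\end{itemize}
Summing the pieces gives the upper bound with $C_2=\log(h_-/h_+)+d_*+d_*/\lambda$. The only delicate point is to check that $h_+$ is the minimum of $h_T$. This holds because $h_T$ interpolates monotonically from $h_-$ through $h_*$ down to $h_+$.

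\textbf{Step 4: consequence \eqref{eq:contrastupper}.} Given large $M$, set
\[
T=\frac{\lambda(\log M-C_2)}{4d_*}.
\]
This satisfies $T\ge1$ once $M\ge M_0(n,p)$, and also $T\ge c\log M$ for such $M$. Then $\log\mathcal R(c_T)\le\log M$, so the pair is admissible in \eqref{eq:contrastinfimum}. It follows that
\[
D_{\mathrm{crit}}(n,p;M)\le D(c_T)\le d_*+\frac{C_1}{T}\le d_*+\frac{C_{n,p}}{\log M}.
\]
The strict lower inequality $d_*<D_{\mathrm{crit}}(n,p;M)$ follows from \eqref{eq:contrastgap}, which was already proved.
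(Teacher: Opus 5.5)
Your proof is correct and follows essentially the paper's argument: the same change of variables $d\tau=ds/(N(h_T-1))$, the same three-piece split of the $s$-line with the explicit tails on $|s|\ge2T$, the same use of $N(h_T-1)\ge\lambda$ on $[-2T,2T]$, and the same choice of $T$ in terms of $\log M$. The one difference is that you integrate the derivative term of \eqref{eq:phasecontrol} exactly as the total derivative $-h_T'/h_T$, obtaining $\log(h_-/h_+)$, whereas the paper bounds $d_T\le d_*+C_1/T$ on $[-2T,2T]$ and pays $4C_1/\lambda$; your $C_2$ is therefore independent of the cutoff $\vartheta$ (and note that the positive limit $c_T(\infty)$ comes from Lemma~\ref{lem:phaseconstruction}, not from \eqref{eq:thresholdapproximation}).
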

\begin{proof}
Fix the cutoff $\vartheta$ used in Subsection~\ref{subsec:scalarconstruction}. We may take
\[
C_1=\frac{2d_*}{n}\|\vartheta'\|_\infty.
\]
The defect estimate is \eqref{eq:thresholdapproximation}. Since $c_T$ is nonincreasing and has positive limits at zero and infinity,
\[
\log\mathcal R(c_T)=\int_{\mathbb R}d_T(\tau)\,d\tau.
\]
The lower bound was proved in Remark \ref{rem:contrast}. For the upper bound, first consider $-2T\le s\le2T$. Here
\[
\frac{d\tau}{ds}=\frac1{N(h_T-1)}\le\frac1\lambda,
\qquad 0\le d_T\le d_*+\frac{C_1}{T}.
\]
Thus this interval contributes at most $4d_*T/\lambda+4C_1/\lambda$.

On $s\le-2T$, one has $h_T=h_-$ and $d\tau/ds=1/2$. Formula \eqref{eq:phasecontrol} gives
\[
d_T=\frac{2d_*e^s}{1+e^s},\qquad
\int_{-\infty}^{-2T}d_T\frac{d\tau}{ds}\,ds
=d_*\log(1+e^{-2T}).
\]
On $s\ge2T$, one has $h_T=h_+$ and $d\tau/ds=1/\lambda$. Hence
\[
d_T=\frac{2d_*}{1+e^s},\qquad
\int_{2T}^\infty d_T\frac{d\tau}{ds}\,ds
=\frac{2d_*}{\lambda}\log(1+e^{-2T}).
\]
The upper bound in \eqref{eq:contrastfamily} follows, for example with
\[
C_2=\frac{4C_1}{\lambda}+d_*+\frac{2d_*}{\lambda}.
\]

Set $C_3=4d_*/\lambda$ and
\[
M_0=\exp\bigl(\max\{C_3+C_2,2C_2\}\bigr).
\]
For $M\ge M_0$, choose
\[
T=\frac{\log M-C_2}{C_3}\ge1.
\]
Then \eqref{eq:contrastfamily} gives $\mathcal R(c_T)\le M$ and
\[
D_{\mathrm{crit}}(n,p;M)
\le D(c_T)
\le d_*+\frac{C_3C_1}{\log M-C_2}
\le d_*+\frac{2C_3C_1}{\log M}.
\]
The strict lower bound follows from \eqref{eq:contrastgap}. This proves \eqref{eq:contrastupper}.
\end{proof}

\begin{proof}[Completion of the proof of Theorem \ref{thm:contrast}]
Only the assertion about arbitrary approximating sequences remains. If $D(c_j)\to d_*$ and $\mathcal R(c_j)$ does not tend to infinity, a subsequence has $\mathcal R(c_j)\le M$ for some finite $M$. Formula \eqref{eq:contrastgap} contradicts $D(c_j)\to d_*$. Therefore $\mathcal R(c_j)\to\infty$. Also, the classes in \eqref{eq:contrastinfimum} increase with $M$. Together with \eqref{eq:contrastupper}, this gives
\[
D_{\mathrm{crit}}(n,p;M)\downarrow d_*
\quad\hbox{as }M\to\infty.
\]
\end{proof}

\begin{proposition}\label{prop:criticallimit}
For the family in Proposition \ref{prop:contrastconstruction}, set
\[
a=2-d_*=\frac{N(p-1)}2,\qquad
V_T(r)=\frac{U_T(r)}{U_T(1)},\qquad
\widehat c_T(r)=c_T(r)U_T(1)^{p-1}.
\]
On the annulus $e^{-T/a}\le r\le e^{T/a}$, the identities
\begin{equation}\label{eq:criticalprofile}
V_T(r)=V_*(r):=\left(\frac2{1+r^a}\right)^{N/a},\qquad
\widehat c_T(r)=c_*(r):=\frac{N(N+a)}4r^{-d_*}
\end{equation}
hold exactly. In particular, $(V_T,\widehat c_T)\to(V_*,c_*)$ in $C^k$ on every compact annulus, for each integer $k\ge0$. The limit solves
\begin{equation}\label{eq:criticalweighted}
-\Delta V_*=\frac{N(N+a)}4|x|^{-d_*}V_*^p
\quad\hbox{on }\mathbb R^n\setminus\{0\},\qquad
p=\frac{n+2-2d_*}{n-2}.
\end{equation}
It is not a pair in $\mathcal A_{n,p}$.
\end{proposition}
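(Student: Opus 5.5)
The plan is to read off the exact identities \eqref{eq:criticalprofile} from the phase parametrization of Lemma~\ref{lem:phaseconstruction}, using the fact that the interval $s\in[-T,T]$ is where the construction in the proof of Theorem~\ref{thm:radialthreshold} sets $h_T=h_*$. The remaining claims (convergence, the equation \eqref{eq:criticalweighted}, and exclusion from $\mathcal A_{n,p}$) then follow with little extra work.

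\emph{Step 1: the $\tau$-interval and $y$.} On $-T\le s\le T$ we have $h_T=h_*=(p+1)/2$. By \eqref{eq:phaseparametrization},
\[
\frac{d\tau}{ds}=\frac{1}{N(h_*-1)}=\frac{2}{N(p-1)}=\frac1a .
\]
Since $\tau(0)=0$, this gives $\tau(s)=s/a$ on this interval. Hence $s\in[-T,T]$ corresponds exactly to $r=e^{\tau}\in[e^{-T/a},e^{T/a}]$. There $e^s=r^a$, so
\[
y(\log r)=\frac{Nr^a}{1+r^a}.
\]

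\emph{Step 2: the profile.} From \eqref{eq:inverseprofile},
\[
V_T(r)=\frac{U_T(r)}{U_T(1)}=\exp\Bigl(-\int_0^{\log r}y\,dt\Bigr).
\]
For $r$ in the annulus, the path of integration between $0$ and $\log r$ stays inside $[-T/a,T/a]$, so Step~1 applies along it. An antiderivative of $Ne^{at}/(1+e^{at})$ is $(N/a)\log(1+e^{at})$. This gives $V_T=(2/(1+r^a))^{N/a}=V_*$.

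\emph{Step 3: the coefficient.} Again by \eqref{eq:inverseprofile},
\[
\widehat c_T=c_TU_T(1)^{p-1}=r^{-2}V_T^{1-p}z,
\qquad z=h_*y(N-y)=h_*N^2\frac{r^a}{(1+r^a)^2}.
\]
Since $N(p-1)/a=2$, we have $V_T^{1-p}=\bigl((1+r^a)/2\bigr)^2$. The factors $(1+r^a)^2$ cancel, leaving
\[
\widehat c_T=\frac{h_*N^2}{4}\,r^{a-2}.
\]
Now $a-2=-d_*$, and $h_*N^2=N^2(p+1)/2=N(N+a)$ because $N+a=N(p+1)/2$. This is exactly $c_*$, so \eqref{eq:criticalprofile} holds.

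\emph{Step 4: convergence, equation, exclusion.} Every compact annulus lies inside $[e^{-T/a},e^{T/a}]$ once $T$ is large. So the identities hold exactly there, and $C^k$ convergence is trivial.

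For the equation, note that $-\Delta U_T=c_TU_T^p$ for $r>0$ (Lemma~\ref{lem:phaseconstruction}). Dividing by $U_T(1)$ gives $-\Delta V_T=\widehat c_TV_T^p$. On each annulus this becomes \eqref{eq:criticalweighted}, and letting $T\to\infty$ covers $\mathbb R^n\setminus\{0\}$. Alternatively, one can check the equation directly through the Emden--Fowler relation $\dot y=y^2-Ny+z$ with $h\equiv h_*$.

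The exponent relation is $d_*=2-N(p-1)/2$, which gives $p=1+2(2-d_*)/N=(n+2-2d_*)/(n-2)$.

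Finally, $d_*>0$, so $c_*(r)\to\infty$ as $r\downarrow0$ and $c_*(r)\to0$ as $r\to\infty$. Thus $c_*$ has no positive upper or lower bound and is not $C^1$ at the origin, so $(c_*,V_*)\notin\mathcal A_{n,p}$.

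There is no real obstacle here. The only point needing care is Step~1: confirming that $\tau(s)=s/a$ holds exactly on the whole interval $[-T,T]$ because of the normalization $\tau(0)=0$, so that the annulus endpoints are $e^{\pm T/a}$. Everything after that is algebra with the identities $N(p-1)=2a$ and $a=2-d_*$.
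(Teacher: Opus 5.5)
Your proposal is correct and matches the paper's proof essentially step by step. On $[-T,T]$ the construction has $h_T=h_*$, so $\tau=s/a$, and the explicit $y$ and $z$ give both identities in \eqref{eq:criticalprofile} via $N(p-1)=2a$ and $h_*N^2=N(N+a)$; the $C^k$ convergence and the limiting equation are then immediate on compact annuli. For the exclusion from $\mathcal A_{n,p}$, the paper additionally notes that $V_*$ is not $C^2$ at the origin (because $1<a<2$). Your observation that $c_*$ is unbounded at $0$ and tends to $0$ at infinity already suffices on its own.
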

\begin{proof}
The parametrization in \eqref{eq:phaseparametrization} fixes $\tau(0)=0$. On $-T\le s\le T$, one has $h_T=h_*$ and
\[
N(h_*-1)=a,\qquad \tau=s/a,\qquad
y=\frac{Nr^a}{1+r^a},\qquad
z=\frac{h_*N^2r^a}{(1+r^a)^2}.
\]
Integrating $d\log V_T/d\tau=-y$ from $r=1$, with $V_T(1)=1$, gives the first identity in \eqref{eq:criticalprofile}. Since
\[
z=\widehat c_T r^2V_T^{p-1},\qquad
V_T^{p-1}=\frac4{(1+r^a)^2},\qquad
h_*N^2=N(N+a),
\]
the second identity follows. For any $0<r_0<r_1<\infty$, choose $T\ge a\max\{|\log r_0|,|\log r_1|\}$. Then \eqref{eq:criticalprofile} holds on $[r_0,r_1]$. The asserted $C^k$ convergence follows.

The equation for $V_T$ is $-\Delta V_T=\widehat c_TV_T^p$. Passing to the limit on compact annuli proves \eqref{eq:criticalweighted}; the exponent relation also follows from the definition of $d_*$. Since $0<d_*<1$, the coefficient $c_*$ is unbounded at zero and tends to zero at infinity. Moreover, $1<a<2$ and
\[
V_*''(r)=-2^{N/a}N(a-1)r^{a-2}+O(r^{2a-2})
\quad\hbox{as }r\downarrow0.
\]
Thus $V_*$ is not $C^2$ at zero. These facts exclude $(c_*,V_*)$ from $\mathcal A_{n,p}$.
\end{proof}

Theorem \ref{thm:contrast} applies to every sequence with $D(c_j)\to d_*$. Proposition \ref{prop:criticallimit} identifies the limit only for the constructed family. Two questions remain: the possible normalized limits of arbitrary such sequences, and a lower bound matching the order $1/\log M$ in \eqref{eq:contrastupper}.

\section{Sharp defects for weighted Newton systems}\label{sec:systems}

We first prove the necessary conditions for the general potential system and give an explicit Newton realization. We then establish the strict endpoint obstruction, a quantitative approximation of that endpoint, and the gap imposed by bounded coefficient ratios. Throughout the Newton arguments, write
\[
N=n-2,\qquad A=2+\alpha,\qquad B=2+\beta.
\]
Thus $A,B>0$, and the exponents in \eqref{eq:systemradialrange} satisfy
\begin{equation}\label{eq:systemscaling}
pt=s+A,\qquad qs=t+B,\qquad
\gamma_0=\frac{pq-1}{(p+1)(q+1)}(s+t-N).
\end{equation}
In particular, the subcritical assumption is equivalent to $s+t>N$.

\subsection{Necessary scaling conditions and Newton realization}

\begin{proof}[Proof of Proposition~\ref{prop:existencerange}]
Lemma~\ref{lem:smallproduct} gives $pq>1$. Proposition~\ref{prop:mass} implies $\mu,\nu\le n-\rho$: if, for example, $\mu>n-\rho$, its estimate for $g=av^p$ forces $\int_{B_R}g\to0$ as $R\to\infty$, contradicting positivity.

The endpoint is also impossible. Suppose $\mu=n-\rho$. The same estimate gives $\int_{\mathbb R^n}g<\infty$. The positive mass of $g$ on $B_1$ and the lower kernel bound imply
\[
u(x)\ge c|x|^{\rho-n}\qquad(|x|\ge2).
\]
For $|x|=r\ge4$, restrict the potential for $v$ to $r\le|y|\le2r$. Since $|x-y|\le3r$ and $b_0$ has a positive lower bound,
\[
v(x)\ge cr^{\rho+\beta-q(n-\rho)}=cr^{-\nu},
\]
where the scaling identity $q\mu=\nu+\rho+\beta$ was used. Consequently
\[
g(x)=|x|^\alpha a_0(x)v(x)^p\ge c|x|^{\alpha-p\nu}=c|x|^{-n}
\qquad(|x|\ge4),
\]
contradicting the integrability of $g$. The case $\nu=n-\rho$ follows by exchanging the equations. This proves \eqref{eq:necessaryrange}. Finally, if \eqref{eq:necessarydefect} failed, Theorem~\ref{thm:main}, with the intrinsic defects, would exclude the solution.

For the converse, set $\rho=2$, $s=\mu$, and $t=\nu$, and assume $0<s,t<N$. Define
\begin{equation}\label{eq:systemexplicit}
u(r)=(1+r^A)^{-s/A},\qquad
v(r)=(1+r^B)^{-t/B},
\end{equation}
and
\begin{align*}
a_0(r)&=s\,[N+A+(N-s)r^A]
(1+r^A)^{-s/A-2}(1+r^B)^{pt/B},\\
b_0(r)&=t\,[N+B+(N-t)r^B]
(1+r^B)^{-t/B-2}(1+r^A)^{qs/A}.
\end{align*}
Direct differentiation gives
\[
-\Delta u=s r^{A-2}[N+A+(N-s)r^A](1+r^A)^{-s/A-2}
=r^\alpha a_0v^p,
\]
and the corresponding identity for $v$. Both coefficients are positive and smooth for $r>0$. Their limits at zero are $s(N+A)$ and $t(N+B)$, and their limits at infinity are $s(N-s)$ and $t(N-t)$, respectively. The latter assertions use $pt=s+A$ and $qs=t+B$. Their radial logarithmic derivatives are bounded, since they are sums of bounded rational functions of $r^A$ and $r^B$. Thus \eqref{eq:coeff} holds with some constants. The profiles are continuous and positive at the origin and $C^2$ away from it. Lemma~\ref{lem:newton} yields the exact Newton potential equations with finite potentials.
\end{proof}

The explicit construction determines the parameter range when the coefficients can be chosen freely. It does not, in general, minimize their logarithmic defects. The remainder of this section addresses that optimization in the subcritical range \eqref{eq:systemradialrange}.

\subsection{The strict endpoint obstruction}

\begin{lemma}\label{lem:systemdecay}
For every quadruple in $\mathcal B$,
\begin{equation}\label{eq:systemflux}
-r^{n-1}u'(r)=\int_0^r \xi^{n-1+\alpha}a_0(\xi)v(\xi)^p\,d\xi,
\qquad
-r^{n-1}v'(r)=\int_0^r \xi^{n-1+\beta}b_0(\xi)u(\xi)^q\,d\xi.
\end{equation}
Both components are strictly decreasing, and
\begin{equation}\label{eq:systemdecay}
u(r)\le Cr^{-s},\quad v(r)\le Cr^{-t},\quad
|u'(r)|\le Cr^{-s-1},\quad |v'(r)|\le Cr^{-t-1}.
\end{equation}
The constants can be chosen uniformly when the two coefficient lower bounds are fixed positive numbers. The same assertions hold for bounded measurable positive coefficients and solutions of the flux equations that are continuous and positive at zero.
\end{lemma}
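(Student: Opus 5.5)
The argument follows the scalar proof of Proposition~\ref{prop:radialendpoint}, run for the coupled pair.

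\emph{Flux formula and monotonicity.} Integrate $-(r^{n-1}u')'=r^{n-1+\alpha}a_0v^p$ on $(\epsilon,r)$ and let $\epsilon\downarrow0$. The boundary term $\epsilon^{n-1}u'(\epsilon)$ must vanish. To see this, use Lemma~\ref{lem:newton}: the pair is a Newton potential, and the radial potential of the locally integrable source $|y|^\alpha a_0v^p$ has flux through $\partial B_\epsilon$ equal to the enclosed mass, which tends to zero. Equivalently, $-\Delta u=g$ in $\mathcal D'(\mathbb R^n)$ with no Dirac mass, since $u$ is continuous at $0$. This gives \eqref{eq:systemflux}. Both right sides are positive, so $u$ and $v$ are strictly decreasing.

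\emph{Lower bootstrap for decay.} Put $m_u=\inf a_0$, $m_v=\inf b_0$ and use monotonicity of $v$. The flux formula gives
\[
-u'(r)\ge \frac{m_u}{n+\alpha}\,r^{1+\alpha}v(r)^p,
\]
and similarly for $v$. Integrate from $r/2$ to $r$:
\[
u(r/2)\ge u(r/2)-u(r)\ge c\,r^{A}v(r)^p,\qquad
v(r/2)\ge c\,r^{B}u(r)^q.
\]
Combining these yields
\[
u(r/4)\ge c\,r^{A}v(r/2)^p\ge c\,r^{A+pB}u(r)^{pq}.
\]
Hence $u(r)\le \bigl(C r^{-(A+pB)}u(r/4)\bigr)^{1/(pq)}$. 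Start from the trivial bound $u\le u(0)$ and iterate on dyadic scales. The exponent converges geometrically to $(A+pB)/(pq-1)=s$. To avoid losing constants, I would argue directly instead. Set
\[
\Theta=\sup_{r\ge1}r^s u(r),
\]
restricted to $r\in[1,R]$ so that it is finite. Using $s\,pq=s+A+pB$, the inequality becomes
\[
r^s u(r)\le C\,(r^s u(r/4))^{1/(pq)}\le C'\,\Theta^{1/(pq)}+C''.
\]
The term $C''$ handles $r/4<1$ through $u\le u(0)$. Since $pq>1$, this gives $\Theta\le C$. The bound for $v$ then follows from $v(r/2)\le C r^{-A/p}u(r/4)^{1/p}$ and the scaling identity $pt=s+A$, which gives $v\le Cr^{-t}$.

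The constants depend on $m_u$, $m_v$, $u(0)$, $v(0)$ and the exponents. The uniformity statement is about these lower bounds, after normalizing the values at zero as in Lemma~\ref{lem:radialcompactness}. This part uses only lower coefficient bounds, which is the claimed uniformity.

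\emph{Derivative bounds.} These need upper coefficient bounds $\sup a_0$, $\sup b_0$. Insert $v\le C\xi^{-t}$ into \eqref{eq:systemflux}. For $r\ge1$,
\[
|u'(r)|\le C r^{1-n}\Bigl(1+\int_1^r\xi^{n-1+\alpha-pt}\,d\xi\Bigr).
\]
The exponent satisfies $n-1+\alpha-pt=n-3-s=N-1-s$, and $N-s>0$ by \eqref{eq:systemradialrange}. So the integral is $O(r^{N-s})$, and $|u'|\le C r^{1-n+N-s}=Cr^{-s-1}$. The bound for $v'$ is symmetric, using $t<N$.

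\emph{Measurable version.} Only the flux formulas and positivity at zero were used. For bounded measurable coefficients, the same chain works verbatim because monotonicity comes from the flux formula itself.

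\emph{Main obstacle.} I expect the bootstrap step to be the hardest: closing the $pq$-power iteration into a clean $r^{-s}$ bound with constants depending only on the lower bounds. The supremum-over-$[1,R]$ device is what I would try first.
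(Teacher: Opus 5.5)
Your flux formula, the strict monotonicity, the decay and derivative bounds for a single quadruple, and the measurable version are all correct. The gap is in the uniformity clause. The lemma asserts that the constants depend only on the two coefficient lower bounds and the exponents. Your constants depend additionally on $u(0)$ and $v(0)$, through the term $C''$ handling $r/4<1$, and your derivative bounds depend on $\sup a_0$ and $\sup b_0$. You even state that the derivative bounds need the upper bounds, and you reinterpret the uniformity as holding only after normalizing the values at zero. That is strictly weaker than what is claimed. It happens to suffice for the later use in the proof of Theorem~\ref{thm:systemcontrast}, where $u_j(0)=v_j(0)=1$ and the ratios are bounded by $M$, but it is not the statement.

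The missing idea is a same-radius comparison between $u$ and its flux. Since $u>0$ and $\xi\mapsto-\xi^{n-1}u'(\xi)$ is nondecreasing,
\[
u(r)\ge\int_r^\infty\bigl(-u'(\xi)\bigr)\,d\xi\ge -r^{n-1}u'(r)\int_r^\infty\xi^{1-n}\,d\xi=\frac{-ru'(r)}{N}.
\]
This gives $|u'(r)|\le Nu(r)/r$ directly. The derivative bounds therefore follow from the decay bounds without any upper coefficient bound, and without using $s<N$.

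Combined with your lower flux estimate, and writing $a_-=\inf a_0$, $b_-=\inf b_0$, it yields
\[
u(r)\ge\frac{a_-}{N(n+\alpha)}\,r^Av(r)^p,\qquad v(r)\ge\frac{b_-}{N(n+\beta)}\,r^Bu(r)^q
\]
at the same radius. Substituting one into the other gives $u(r)^{pq-1}\le Cr^{-(A+pB)}$, that is, $u\le Cr^{-s}$ for all $r>0$. Here $C$ depends only on $a_-$, $b_-$ and the exponents, and no dyadic iteration is needed.

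Within your own scheme, the $u(0)$-dependence could also be removed by taking the supremum of $r^su(r)$ over $(0,R]$ instead of $[1,R]$, since $r/4$ then stays in that range. The upper-bound dependence of your derivative estimate, however, cannot be removed from the flux formula alone; you must use positivity of $u$ as above.
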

\begin{proof}
Integration of the radial equations gives \eqref{eq:systemflux}. The integration constants at zero vanish: the sources are integrable there because $\alpha,\beta>-2$, and a nonzero constant flux would produce an $r^{2-n}$ singularity, contradicting continuity. The fluxes are positive and nondecreasing. Positivity of $u$ therefore gives
\[
u(r)\ge\int_r^\infty[-\xi^{n-1}u'(\xi)]\xi^{1-n}\,d\xi
\ge\frac{-ru'(r)}N.
\]
The same argument applies to $v$. Since both components decrease,
\[
u(r)\ge\frac{a_-}{N(n+\alpha)}r^A v(r)^p,
\qquad
v(r)\ge\frac{b_-}{N(n+\beta)}r^B u(r)^q,
\]
where $a_-=\inf a_0>0$ and $b_-=\inf b_0>0$. Substitution of one inequality into the other gives \eqref{eq:systemdecay} for $u,v$, using $pq>1$ and the definitions of $s,t$. The derivative estimates follow from $-ru'\le Nu$ and $-rv'\le Nv$. Every step uses only the flux equations and coefficient lower bounds, so the last assertion follows as well.
\end{proof}

\begin{proposition}\label{prop:systemendpoint}
Under \eqref{eq:systemradialrange}, every quadruple in $\mathcal B$ satisfies \eqref{eq:systemstrict}.
\end{proposition}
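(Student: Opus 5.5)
We argue by contradiction with a radial Pohozaev identity for the system, as in Proposition~\ref{prop:radialendpoint}. Suppose a quadruple in $\mathcal B$ has $\mathcal D(a_0,b_0)\le\gamma_0$. Put $D_a=D(a_0)$ and $D_b=D(b_0)$, so that $r a_0'/a_0\ge -D_a$ and $r b_0'/b_0\ge -D_b$.

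The quantity to monitor is the coupled Rellich--Pohozaev function
\[
P(r)=r^{n}\Bigl(u'v'+\frac{r^\alpha a_0 v^{p+1}}{p+1}+\frac{r^\beta b_0 u^{q+1}}{q+1}\Bigr)+\theta_1 r^{n-1}u'v+\theta_2 r^{n-1}uv',
\]
with constants $\theta_1+\theta_2=N$. We choose
\[
\theta_1=\frac{n+\alpha}{p+1}-\frac{D_a}{p+1}-\kappa,\qquad \theta_2=\frac{n+\beta}{q+1}-\frac{D_b}{q+1}-\kappa',
\]
where the slack $\kappa+\kappa'=\gamma_0-\mathcal D\ge0$ is split between them (for instance $\kappa'=0$).

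Differentiating along the radial equations $(r^{n-1}u')'=-r^{n-1+\alpha}a_0v^p$ and its analogue, the $u'v'$ terms cancel because $\theta_1+\theta_2=N$. The mixed $a_0v^pv'$ and $b_0u^qu'$ terms cancel between the potential terms and $r^nu'v'$. The resulting formula is
\[
P'=\frac{r^{n-1+\alpha}a_0v^{p+1}}{p+1}\Bigl(n+\alpha+\frac{ra_0'}{a_0}-(p+1)\theta_1\Bigr)+\frac{r^{n-1+\beta}b_0u^{q+1}}{q+1}\Bigl(n+\beta+\frac{rb_0'}{b_0}-(q+1)\theta_2\Bigr).
\]
With the above choice, each bracket is at least its slack $(p+1)\kappa\ge0$ or $(q+1)\kappa'\ge0$ after using the defect bounds. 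Hence $P'\ge0$. I will check that this bookkeeping is consistent, i.e.\ that $\theta_1+\theta_2=N$ exactly uses $\mathcal D\le\gamma_0$; it does, by the definition of $\gamma_0$.

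Next we show $P(r)\to0$ at both ends. At infinity we use Lemma~\ref{lem:systemdecay}: every term of $P$ is $O(r^{N-s-t})$, because $r^{n+\alpha}v^{p+1}\lesssim r^{n+\alpha-t(p+1)}=r^{N-s-t}$ by $pt=s+A$, and similarly for the other terms. This tends to zero since $s+t>N$. At zero we use the flux formula \eqref{eq:systemflux}. It gives $|u'|\lesssim r^{1+\alpha}$ and $|v'|\lesssim r^{1+\beta}$ with $\alpha,\beta>-2$, while $u,v$ stay bounded. Each term is then $O(r^{n+\alpha})$, $O(r^{n+\beta})$ or $O(r^{n+\alpha+\beta+2})$, all of which tend to zero.

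Thus $P$ is nondecreasing with $P\equiv0$, so $P'\equiv0$. This is the main obstacle: we must produce strict positivity of $P'$ somewhere without assuming $a_0,b_0$ are $C^1$ at $0$, since they are only $C^1$ on $(0,\infty)$.

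My plan is as follows. Because $P'\equiv0$ and both weight terms are positive, both brackets vanish identically. This forces $ra_0'/a_0\equiv -D_a+(p+1)\kappa$ and $rb_0'/b_0\equiv -D_b+(q+1)\kappa'$, so each coefficient is a pure power $r^{e_a}$, $r^{e_b}$. Boundedness above and below of $a_0,b_0$ then forces $e_a=e_b=0$. Consequently $D_a=(p+1)\kappa$ and $D_b=(q+1)\kappa'$, and the split of the slack becomes $\kappa+\kappa'=\gamma_0-\mathcal D=\gamma_0-\kappa-\kappa'$. With constant coefficients this gives $\mathcal D=0$, hence $2(\kappa+\kappa')=\gamma_0$, and the bracket identities then yield $\gamma_0=0$. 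This contradicts $\gamma_0>0$.

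The delicate point is the freedom in choosing $\kappa,\kappa'$. To force the contradiction cleanly, I would instead take $\kappa=\gamma_0-\mathcal D$ and $\kappa'=0$ and redo the argument. Then $P'\equiv0$ gives $a_0$ and $b_0$ as pure powers. Boundedness gives constant coefficients, so $D_a=D_b=0$ and $\kappa=\gamma_0>0$. But the first bracket equals $(p+1)\kappa>0$, so it does not vanish, which is a contradiction.
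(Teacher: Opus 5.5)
Your proof is correct and follows the paper's argument. Both use the same coupled Pohozaev function with $\theta_1+\theta_2=N$, the same vanishing of $P$ at $0$ and $\infty$ via the flux formula and Lemma~\ref{lem:systemdecay}, and the same rigidity step (vanishing brackets force pure powers, which contradicts the two-sided coefficient bounds). The paper puts the slack $\gamma_0-\mathcal D$ into the $b_0$-term and treats $\Gamma>0$ and $\Gamma=0$ separately, whereas you put it into the $a_0$-term and argue in one pass.

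In your discarded first attempt there is a sign slip: the vanishing bracket gives $ra_0'/a_0\equiv-D_a-(p+1)\kappa$, not $-D_a+(p+1)\kappa$. With the correct sign, boundedness forces $D_a=D_b=\kappa=\kappa'=0$ and hence $\gamma_0=0$ for any nonnegative split of the slack, so there is no delicate point there.
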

\begin{proof}
Put $d_a=D(a_0)$ and $d_b=D(b_0)$, and suppose, to the contrary, that
\[
\Gamma=\gamma_0-\frac{d_a}{p+1}-\frac{d_b}{q+1}\ge0.
\]
Set
\[
\ell=\frac{n+\alpha-d_a}{p+1},\qquad m=N-\ell,
\]
and define the radial Pohozaev function
\begin{equation}\label{eq:systempohozaev}
\begin{split}
P(r)={}&r^n\left[u'v'+\frac{r^\alpha a_0v^{p+1}}{p+1}
+\frac{r^\beta b_0u^{q+1}}{q+1}\right]\\
&+r^{n-1}(muv'+\ell vu').
\end{split}
\end{equation}
Differentiating and using $\ell+m=N$ gives
\begin{equation}\label{eq:systempohozaevderivative}
\begin{split}
P'(r)={}&\frac{r^{n-1+\alpha}a_0v^{p+1}}{p+1}
\left(d_a+\frac{ra_0'}{a_0}\right)\\
&+\frac{r^{n-1+\beta}b_0u^{q+1}}{q+1}
\left(d_b+\frac{rb_0'}{b_0}\right)
+\Gamma r^{n-1+\beta}b_0u^{q+1}\ge0.
\end{split}
\end{equation}
Near zero, the flux equations give $u'=O(r^{A-1})$ and $v'=O(r^{B-1})$, so $P(r)\to0$. At infinity, Lemma~\ref{lem:systemdecay} and \eqref{eq:systemscaling} imply
\[
|P(r)|\le Cr^{N-s-t}\longrightarrow0.
\]
Hence $P\equiv0$. If $\Gamma>0$, the final term in \eqref{eq:systempohozaevderivative} gives an immediate contradiction. If $\Gamma=0$, the two other nonnegative terms vanish separately. Thus
\[
a_0(r)=c_a r^{-d_a},\qquad b_0(r)=c_b r^{-d_b}.
\]
Since $d_a/(p+1)+d_b/(q+1)=\gamma_0>0$, at least one exponent is positive, contradicting the positive upper and lower coefficient bounds.
\end{proof}

\subsection{Near-critical constructions}

The next construction retains a long portion of a critical weighted ground state and changes its logarithmic slopes near both ends. Unlike the scalar construction, it allows the coefficients to increase on some transition intervals. Smoothness here is asserted for $r>0$; noninteger or negative weights need not give smooth profiles at the origin.

\begin{lemma}\label{lem:systemconstruction}
Assume \eqref{eq:systemradialrange}. There are constants $C,T_0>0$ such that, for every $T\ge T_0$, there is a quadruple $(a_{0,T},b_{0,T},u_T,v_T)\in\mathcal B$ with
\begin{equation}\label{eq:systemquantitative}
\gamma_0<\mathcal D(a_{0,T},b_{0,T})\le\gamma_0+\frac CT,
\qquad
\log\mathcal R(a_{0,T},b_{0,T})\le CT.
\end{equation}
All four profiles are smooth for $r>0$, and the coefficients and solutions have the constant-coefficient power tails stated in Theorem~\ref{thm:systemthreshold}.
\end{lemma}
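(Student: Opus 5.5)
We construct the quadruple by a two-component analogue of the phase-curve method of Lemma~\ref{lem:phaseconstruction}, anchored on a critical weighted ground state. Pick defects $d_a,d_b\ge0$ with $d_a/(p+1)+d_b/(q+1)=\gamma_0$; for definiteness take $d_a=(p+1)\theta\gamma_0$ and $d_b=(q+1)(1-\theta)\gamma_0$ for a fixed $\theta\in[0,1]$. The weights $\alpha'=\alpha-d_a$ and $\beta'=\beta-d_b$ then satisfy
\[
\frac{n+\alpha'}{p+1}+\frac{n+\beta'}{q+1}=N,
\]
and we choose $\theta$ so that $\alpha',\beta'>-2$ and the scaling exponents of $(p,q,\alpha',\beta')$ remain in the admissible range. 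This should be possible because $s,t<N$ and the defect can be distributed continuously. By \cite[Theorem~1.4]{BG}, the system $-\Delta U=r^{\alpha'}U_2^p$, $-\Delta U_2=r^{\beta'}U^q$ then has a positive radial ground state $(U_*,V_*)$ with $U_*\sim r^{-s_*}$, $V_*\sim r^{-t_*}$ at infinity, where $s_*+t_*=N$, and with regular behaviour at $0$. Relative to the exponents $\alpha,\beta$, this corresponds to coefficients $a_0=r^{-d_a}$ and $b_0=r^{-d_b}$, whose defect is exactly $\gamma_0$. These coefficients are unbounded, however, so we must truncate them.

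Work in $\tau=\log r$ with $y_1=-\dot u/u$, $y_2=-\dot v/v$, $z_1=r^{2+\alpha}a_0v^p/u$, $z_2=r^{2+\beta}b_0u^q/v$. The system becomes $\dot y_i=y_i^2-Ny_i+z_i$ with the logarithmic derivatives of $a_0$ and $b_0$ read off from $\dot z_i$. On $|\tau|\le T$ we keep the ground state exactly, rescaled so that it is centered at $\tau=0$. For $\tau\le -2T$ we want constant coefficients; near $0$ the ground state behaves like $u\approx u(0)-cr^{A-d_a}$. We therefore interpolate the phase variable $y_1$ over $[-2T,-T]$ to the regular profile with $y_1\sim r^{A}$. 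Concretely, we prescribe $y_i(\tau)$ as smooth interpolations with slopes $O(1/T)$ between the ground-state curve and the constant-coefficient curves, which are the regular ones at $0$ and the exact power tails $u=C_ur^{-s}$, $v=C_vr^{-t}$ at infinity, i.e.\ $y_1\equiv s$ and $y_2\equiv t$. We then define $z_i:=\dot y_i-y_i^2+Ny_i$ and recover $a_0,b_0$ from $z_i$. The requirements are $z_i>0$ and that the recovered $-\frac{d}{d\tau}\log a_0$ stays within $d_a+O(1/T)$, and likewise for $b_0$. Since $\frac{d}{d\tau}\log a_0$ is a combination of $\dot z_1/z_1$, $y_1$ and $y_2$, it equals $-d_a$ on the core and is constant-coefficient on the tails. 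On the transitions the interpolation error is $O(1/T)$ from derivatives plus a term linear in the convex parameter. Choosing the interpolation monotone (compare the sign analysis of $b_T$ in the proof of Theorem~\ref{thm:radialthreshold}), we keep the combined quantity $\mathcal D$ at most $\gamma_0+C/T$, although an individual coefficient may increase, as the text warns.

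Assuming this works, the bounds follow quickly. Lemma~\ref{lem:newton} and smoothness for $r>0$ give membership in $\mathcal B$. The strict lower bound is Proposition~\ref{prop:systemendpoint}. The ratio bound follows from $\log a_0$ varying by $\int|\frac{d}{d\tau}\log a_0|\,d\tau=O(T)$, since the defects are bounded and the non-constant region has length $O(T)$.

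The main obstacle is the transition at infinity (and similarly at $0$). There we must move from the critical tail $(s_*,t_*)$ with $s_*+t_*=N$ to the subcritical tail $(s,t)$ while keeping $z_i>0$ and the weighted defect sum at most $\gamma_0+O(1/T)$ pointwise. Unlike the scalar case, a single parameter does not control both components. My first attempt would be to interpolate linearly in $\tau$ between the stationary points $(y_1,y_2)=(s_*,t_*)$ and $(s,t)$ of the autonomous system over an interval of length $T$. At these stationary points the defect of each component is an affine function of $(y_1,y_2)$, so along the segment the weighted combination $\frac{d_a(\tau)}{p+1}+\frac{d_b(\tau)}{q+1}$ is affine. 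It equals $\gamma_0$ at the critical end and $0$ at the subcritical end, which gives $\mathcal D\le\gamma_0+O(1/T)$ after adding the $O(1/T)$ derivative corrections. Before the linear interpolation, the ground state must first be brought $O(1/T)$-close to its stationary tail, which costs only $O(\log T)\le T$ in length.
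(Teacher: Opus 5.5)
Your overall architecture matches the paper's: a critical weighted ground state from \cite[Theorem~1.4]{BG}, logarithmic phase variables, recovery of $a_0,b_0$ from $z_1,z_2$, a non-constant region of length $O(T)$, and a final affine interpolation to $(s,t)$. The transition at infinity, however, rests on a false description of that ground state.

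You assert $U_*\sim r^{-s_*}$, $V_*\sim r^{-t_*}$ with $s_*+t_*=N$, i.e.\ that the orbit tends to the singular stationary point $(y_1,y_2)=(s_*,t_*)$. This cannot happen. Put $\ell=(n+\alpha-d_a)/(p+1)$ and $m=N-\ell$. The Pohozaev function \eqref{eq:systempohozaev} can then be written as $P=r^Nuv\,\bigl[y_1y_2+\frac{z_1}{p+1}+\frac{z_2}{q+1}-\ell y_1-my_2\bigr]$. For $a_0=r^{-d_a}$, $b_0=r^{-d_b}$ and $\Gamma=0$, identity \eqref{eq:systempohozaevderivative} gives $P'\equiv0$. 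Since $P(0^+)=0$, it follows that $P\equiv0$ on the ground state. At the singular point, by contrast, $z_1=z_2=s_*t_*$, and the scaling identity $pt_*=s_*+A-d_a$ gives $\ell=t_*$ and $m=s_*$. Hence $P=r^Nuv\,s_*t_*\bigl(\frac1{p+1}+\frac1{q+1}-1\bigr)<0$ there. In fact the slopes converge to a boundary point of $(0,N)^2$ with at least one fast component: $(N,N)$, $(L,N)$ or $(N,L)$. This is the nonresonant asymptotics of \cite{BG}, which is why the paper also excludes $pN=n+\alpha-d_a$ and $qN=n+\beta-d_b$ when choosing the budgets. So there is no stationary tail to approach in time $O(\log T)$, and your segment from $(s_*,t_*)$ to $(s,t)$ is not attached to the core orbit.

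\textbf{What is missing is the exit from the fast endpoint, and that is where the real work lies.} Take a fast component, say $y_1=N-\delta$. There $\delta$ is exponentially small, $z_1\to0$, and $\dot z_1/z_1$ is of order one. A linear interpolation at speed $1/T$ would make $z_1=\dot y_1+y_1\delta$ negative as soon as $\delta\lesssim1/T$.

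The paper instead prescribes $h=\dot\delta/\delta$ and uses the exact formulas $z_1=\delta(N-\delta-h)$ and $\mathfrak a=A+y_1-py_2-h+(\delta h+\dot h)/(N-\delta-h)$, where $\mathfrak a=-\frac{d}{d\tau}\log a_0$. It proceeds in two steps, each taking time $O(T)$:
\begin{itemize}
\item it raises $h$ from $-\lambda_a$ to a small $\varrho>0$, using the margin $\lambda_a=d_a-(A+N-pY)>0$, where $Y=\lim y_2$;
\item it then lets $\delta$ grow exponentially from size $e^{-cT}$ to a fixed $\delta_0$.
\end{itemize}

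In the mixed case $(y_1,y_2)\to(L,N)$, the slow component has zero margin, $A+L-pN=d_a$. Letting $y_2=N-\delta$ move alone would therefore push $A+y_1-py_2$ above $d_a$ by $p\delta_0$. The paper compensates by moving along $y_1=L-J\delta$ with $J>p$. Only after reaching an interior point $(x_0,y_0)$ with $A+x_0-py_0\le d_a$ and $B+y_0-qx_0\le d_b$ does your affine interpolation to $(s,t)$ apply.

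Two smaller corrections:
\begin{itemize}
\item $\mathcal D$ is a weighted sum of the separate suprema, so you need $\mathfrak a\le d_a+C/T$ and $\mathfrak b\le d_b+C/T$ individually, not a bound on the weighted combination. On the final segment these do hold, since each affine function lies between its endpoint values.
\item At the origin you must interpolate the logarithmic slope $\dot y_1/y_1$ from $A$ to $A-d_a$ with derivative $O(1/T)$, not $y_1$ itself. A convex combination of $e^{(A-d_a)\tau}$ and $e^{A\tau}$ switches rate abruptly on $[-2T,-T]$, and that switch produces a large positive defect.
\end{itemize}
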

\begin{proof}
Choose numbers $d_a,d_b$ with
\begin{equation}\label{eq:systembudgets}
0<d_a<A,\qquad 0<d_b<B,\qquad
\frac{d_a}{p+1}+\frac{d_b}{q+1}=\gamma_0.
\end{equation}
Such a choice exists because
\[
0<\gamma_0<\frac A{p+1}+\frac B{q+1},
\]
the strict upper bound following from $pq>1$. We also avoid the two resonance conditions
\[
pN=n+\alpha-d_a,\qquad qN=n+\beta-d_b.
\]
They exclude at most two points of the open line segment of choices in \eqref{eq:systembudgets}.

The critical weighted system
\begin{equation}\label{eq:systemcriticalorbit}
-\Delta u_*=r^{\alpha-d_a}v_*^p,\qquad
-\Delta v_*=r^{\beta-d_b}u_*^q
\end{equation}
has a positive radial ground state by \cite[Theorem~1.4]{BG}, since $\alpha-d_a,\beta-d_b>-2$ and
\[
\frac{n+\alpha-d_a}{p+1}+\frac{n+\beta-d_b}{q+1}=N.
\]
Use logarithmic radius $\tau=\log r$ and logarithmic slopes
\[
x=-\frac{d\log u}{d\tau},\qquad y=-\frac{d\log v}{d\tau},
\qquad z=x'+x(N-x),\qquad w=y'+y(N-y),
\]
where primes in the construction denote $\tau$ derivatives. Given smooth slopes with $z,w>0$, integrate the equations for $u,v$ and define
\begin{equation}\label{eq:systeminverse}
a_0(e^\tau)=e^{-A\tau}\frac{u(e^\tau)}{v(e^\tau)^p}z(\tau),
\qquad
b_0(e^\tau)=e^{-B\tau}\frac{v(e^\tau)}{u(e^\tau)^q}w(\tau).
\end{equation}
These coefficients solve the required differential system. Their signed logarithmic defects are exactly
\begin{equation}\label{eq:systemslopeidentity}
\mathfrak a=A+x-py-\frac{z'}z,\qquad
\mathfrak b=B+y-qx-\frac{w'}w.
\end{equation}
On the unmodified critical orbit, $\mathfrak a=d_a$ and $\mathfrak b=d_b$.

We record the endpoint information used for the modifications. At the origin,
\[
x_*(\tau)\asymp e^{(A-d_a)\tau},\qquad
y_*(\tau)\asymp e^{(B-d_b)\tau},\qquad
\frac{x_*'}{x_*}\to A-d_a,\qquad
\frac{y_*'}{y_*}\to B-d_b.
\]
The derivatives of the last two ratios tend to zero exponentially, as follows from the flux equations. At infinity, the nonresonant asymptotics in~\cite[Theorem~1.4]{BG} give
\begin{equation}\label{eq:systemendpoints}
(x_*,y_*)\longrightarrow (X,Y)\in\{(N,N),(L,N),(N,L)\},
\end{equation}
where any slow slope $L$ lies strictly between $0$ and $N$. For a fast component, say $x_*\to N$, put $\delta_*=N-x_*$; then
\begin{equation}\label{eq:systemfastasymptotic}
\delta_*\asymp e^{-\lambda_a\tau},\qquad
\frac{\delta_*'}{\delta_*}\to-\lambda_a,
\qquad \lambda_a=d_a-(A+N-pY)>0.
\end{equation}
For a slow component, its first two slope derivatives tend to zero exponentially. These derivative estimates follow either from the autonomous radial equations or by differentiating the flux representation with the stated power asymptotics. In particular, all the endpoint errors, with the derivatives used below, are $O(e^{-c|\tau|})$ for some $c>0$.

We keep the critical orbit on $[-c_1T,c_1T]$, with $c_1$ fixed sufficiently large, and make the following smooth modifications outside this interval. Constants in the rest of the proof depend on the fixed parameters, the chosen budgets, and fixed smooth cutoff functions, but not on $T$.

\smallskip\noindent\emph{The origin.}
Prescribe $k=x'/x$. On an interval of length $O(T)$ to the left of $-c_1T$, interpolate from $k=A$ to $x_*'/x_*$, agreeing with the latter near the matching point, with
\[
k\ge A-d_a-C/T>0,\qquad |k'|\le C/T.
\]
Solve $x'=kx$ backwards from $x(-c_1T)=x_*(-c_1T)$. Make the same construction for $y$, from $B$ to $y_*'/y_*$. The slopes remain exponentially small on the modified intervals. Since
\[
z=x(N-x+k),\qquad
\mathfrak a=A-k+x-py+\frac{kx-k'}{N-x+k},
\]
we have $z>0$ and $\mathfrak a\le d_a+C/T$; likewise $w>0$ and $\mathfrak b\le d_b+C/T$. Sufficiently far to the left, the slopes are $x=c_a e^{A\tau}$ and $y=c_b e^{B\tau}$, so
\[
u(r)=u_0\exp(-c_a r^A/A),\qquad
v(r)=v_0\exp(-c_b r^B/B).
\]
Formula \eqref{eq:systeminverse} then gives finite positive coefficient limits at zero.

\smallskip\noindent\emph{Leaving a fast endpoint.}
For a fast slope, write $x=N-\delta$ and prescribe $h=\delta'/\delta$. The exact formulas are
\begin{equation}\label{eq:systemfastexit}
z=\delta(N-\delta-h),\qquad
\mathfrak a=A+x-py-h+\frac{\delta h+h'}{N-\delta-h}.
\end{equation}
Starting at $\tau=c_1T$, change $h$ smoothly from its critical value to a fixed small positive number $\varrho$, using a transition of length $O(T)$, with
\[
h\ge-\lambda_a-C/T,\qquad h\le\varrho<N/4,\qquad |h'|\le C/T.
\]
Take $c_1$ large enough that $\delta$ remains exponentially small during this transition. Then $N-\delta-h\ge N/2$, and \eqref{eq:systemfastexit} gives $\mathfrak a\le d_a+C/T$, provided the other slope remains exponentially close to its endpoint value. A slow component can be smoothly made constant at its limiting value on this interval, with exponentially small errors in its first two derivatives. The same estimates apply if the second component is fast.

It remains to move a fixed distance into $(0,N)^2$ without losing the defect bound. This additional step is needed to control the coefficient ratios quantitatively.

If both slopes are fast, set
\[
\lambda_a=d_a-(A+N-pN)>0,\qquad
\lambda_b=d_b-(B+N-qN)>0.
\]
Once both $h$ values are nonnegative, each component has a fixed positive defect margin. Choose a sufficiently small fixed $\delta_0>0$. Allow the corresponding $\delta$ values to grow, with $0\le h\le\varrho$, to values in $[\delta_0/2,\delta_0]$, then smoothly reduce $h$ to zero. The reduction can be made nonincreasing. Formula \eqref{eq:systemfastexit} and its analogue for $y$ preserve $z,w>0$ and the bounds $\mathfrak a\le d_a+C/T$, $\mathfrak b\le d_b+C/T$, by choosing $\delta_0,\varrho$ smaller than the fixed margins. For example, when $h$ is constant or decreasing,
\[
-h+\frac{\delta h+h'}{N-\delta-h}\le0
\]
for $\delta_0,\varrho$ sufficiently small. The affine terms in \eqref{eq:systemslopeidentity} differ from their endpoint values by at most $(1+p+q)\delta_0$.

In the mixed case, suppose $(X,Y)=(L,N)$. Then
\[
A+L-pN=d_a,\qquad \lambda_b=d_b-(B+N-qL)>0.
\]
Initially freeze $x=L$ while the fast component completes the preceding transition; all errors are exponentially small. After $h=\varrho$ has been reached and while $\delta=N-y$ is still exponentially small, smoothly set
\begin{equation}\label{eq:systemmixedexit}
y=N-\delta,\qquad x=L-J\delta,\qquad J>p.
\end{equation}
The short matching interval contributes only exponentially small errors. On the exact path \eqref{eq:systemmixedexit},
\[
A+x-py=d_a-(J-p)\delta,
\]
and
\[
z=x(N-x)-Jh\delta,\qquad
z'=-J\delta\{h'+h^2+h(N-2x)\}.
\]
Fix $\varrho>0$ sufficiently small and then $\delta_0>0$ sufficiently small. During growth and the final reduction of $h$, impose $0\le h\le\varrho$ and $|h'|\le\varrho^2$. It follows that
\[
z\ge c>0,\qquad \left|\frac{z'}z\right|\le\frac{J-p}{2}\delta,
\qquad \mathfrak a\le d_a-\frac{J-p}{2}\delta.
\]
The fast component retains its fixed margin $\lambda_b$, since
\[
B+y-qx=d_b-\lambda_b+(qJ-1)\delta.
\]
Formula \eqref{eq:systemfastexit}, applied to $y$, controls its remaining terms. Grow $\delta$ to $[\delta_0/2,\delta_0]$ and reduce $h$ to zero. The case $(N,L)$ is symmetric.

In each case we reach constant slopes $(x_0,y_0)$ at a fixed positive distance from the boundary of $(0,N)^2$, with
\[
A+x_0-py_0\le d_a,\qquad B+y_0-qx_0\le d_b.
\]
The time spent is $O(T)$. Indeed, \eqref{eq:systemfastasymptotic} makes each initial fast deficit comparable to $e^{-cT}$ with fixed positive exponential rates, and changing $h$ for time $O(T)$ preserves such two-sided exponential bounds. Growth at the fixed rate $\varrho$ to a fixed deficit therefore also takes time $O(T)$.

\smallskip\noindent\emph{The power tail and the coefficient ratios.}
Over a further interval of length $T$, interpolate smoothly along the line segment joining $(x_0,y_0)$ to $(s,t)$, with derivatives of order $j=1,2$ bounded by $C/T^j$. This segment stays a fixed positive distance from the boundary of $(0,N)^2$. Hence $z,w\ge c>0$ and $|z'/z|+|w'/w|\le C/T$. The affine functions $A+x-py$ and $B+y-qx$ are bounded above by $d_a,d_b$, respectively, throughout the segment: this holds at its initial point, while at $(s,t)$ both functions vanish by \eqref{eq:systemscaling}. Thus
\[
\mathfrak a\le d_a+C/T,\qquad \mathfrak b\le d_b+C/T
\]
throughout the construction. After the interpolation, set $x=s$ and $y=t$. Then $u=C_ur^{-s}$, $v=C_vr^{-t}$ and
\[
a_0=s(N-s)\frac{C_u}{C_v^p},\qquad
b_0=t(N-t)\frac{C_v}{C_u^q}
\]
are positive constants. The origin formulas, the compact transitions, and these tails show that the resulting quadruple belongs to $\mathcal B$. Lemma~\ref{lem:newton} gives the potential representation.

Finally, all nonconstant transitions and the retained critical segment have total logarithmic length $O(T)$, and $|\mathfrak a|+|\mathfrak b|$ is uniformly bounded there. On the remaining interval near the origin it is integrable with a bound independent of $T$, by the exponential origin formulas; on the final tail it vanishes. Therefore
\[
\log\mathcal R(a_0,b_0)
\le\int_{\mathbb R}(|\mathfrak a|+|\mathfrak b|)\,d\tau\le CT.
\]
The defect upper bound follows from \eqref{eq:systembudgets} and the two componentwise estimates. Proposition~\ref{prop:systemendpoint} supplies the strict lower bound. Enlarging $C,T_0$ if needed proves \eqref{eq:systemquantitative}.
\end{proof}

\begin{proof}[Proof of Theorem~\ref{thm:systemthreshold}]
Proposition~\ref{prop:systemendpoint} proves the strict inequality for every admissible quadruple. Lemma~\ref{lem:systemconstruction}, with $T$ sufficiently large, gives the opposite inequality for the infimum and every asserted approximation. Thus \eqref{eq:systeminfimum} holds, and the infimum is not attained.
\end{proof}

\subsection{Fixed coefficient-ratio gaps}

\begin{proof}[Proof of Theorem~\ref{thm:systemcontrast}]
Suppose the strict gap fails for some finite $M$. There are quadruples in $\mathcal B$ with
\begin{equation}\label{eq:systemcontradictionsequence}
\mathcal R(a_{0,j},b_{0,j})\le M,\qquad
\mathcal D(a_{0,j},b_{0,j})\longrightarrow\gamma_0.
\end{equation}
Constant rescalings of the two components, followed by a spatial dilation, preserve both defects and coefficient ratios. More explicitly, for $U_0=u(0)$, $V_0=v(0)$, and $\lambda>0$, put
\[
\widetilde u(r)=\frac{u(\lambda r)}{U_0},\qquad
\widetilde v(r)=\frac{v(\lambda r)}{V_0}.
\]
The corresponding coefficient factors are
\[
\widetilde a_0(r)=\lambda^A\frac{V_0^p}{U_0}a_0(\lambda r),\qquad
\widetilde b_0(r)=\lambda^B\frac{U_0^q}{V_0}b_0(\lambda r).
\]
Choose $\lambda$ so that $\inf\widetilde a_0=1$. Suppressing tildes, the sequence is normalized by
\begin{equation}\label{eq:systemnormalization}
u_j(0)=v_j(0)=1,\qquad 1\le a_{0,j}\le M,\qquad
\kappa_j\le b_{0,j}\le M\kappa_j.
\end{equation}

Global positivity bounds $\kappa_j$ above and below. Since $u_j,v_j\le1$, the flux equations yield
\[
u_j(r)\ge1-\frac{Mr^A}{(n+\alpha)A}.
\]
Choose a fixed $R_a>0$ so that $u_j\ge1/2$ on $[0,R_a]$. Then
\[
0<v_j(R_a)\le1-\frac{\kappa_j2^{-q}R_a^B}{(n+\beta)B},
\]
which bounds $\kappa_j$ above. Conversely,
\[
v_j(r)\ge1-\frac{M\kappa_jr^B}{(n+\beta)B}.
\]
If $\kappa_j\to0$, choose $R_j\to\infty$ so that $v_j\ge1/2$ on $[0,R_j]$. The first flux equation then gives
\[
u_j(R_j)\le1-\frac{2^{-p}R_j^A}{(n+\alpha)A}<0
\]
for large $j$, a contradiction. Hence
\begin{equation}\label{eq:systemkappabounds}
0<c(n,p,q,\alpha,\beta,M)\le\kappa_j\le C(n,p,q,\alpha,\beta,M)<\infty.
\end{equation}

Pass to a subsequence such that
\[
D(a_{0,j})\to d_a,\qquad D(b_{0,j})\to d_b,\qquad
\frac{d_a}{p+1}+\frac{d_b}{q+1}=\gamma_0.
\]
The sequences of individual defects are bounded by \eqref{eq:systemcontradictionsequence}. The functions
\[
r^{D(a_{0,j})}a_{0,j}(r),\qquad r^{D(b_{0,j})}b_{0,j}(r)
\]
are nondecreasing. Helly compactness on compact annuli, a diagonal extraction, and the coefficient bounds give almost-everywhere convergence to $a_0,b_0$, followed by $L^1(0,R)$ convergence for every finite $R$. The limits are locally of bounded variation away from zero and retain common positive upper and lower bounds. Moreover,
\[
F_a=r^{d_a}a_0,\qquad F_b=r^{d_b}b_0
\]
have nondecreasing representatives, by passage to the distributional inequalities.

The flux equations and \eqref{eq:systemnormalization}--\eqref{eq:systemkappabounds} imply
\[
|u_j'(r)|\le Cr^{A-1},\qquad |v_j'(r)|\le Cr^{B-1}
\]
near zero. Since $A,B>0$, these estimates give equicontinuity up to zero; away from zero the equations give local $C^1$ compactness. After extraction, $u_j,v_j$ converge locally uniformly on $[0,\infty)$ and locally in $C^1((0,\infty))$ to $u,v$. Dominated convergence passes the flux equations to the limit. The limit has $u(0)=v(0)=1$, is nonnegative, and is strictly positive: a first zero of either component would have a strictly negative flux derivative there, contradicting nonnegativity beyond it. Lemma~\ref{lem:systemdecay} applies uniformly to the sequence and its limit.

Use \eqref{eq:systempohozaev} for the limit, now with
\[
\ell=\frac{n+\alpha-d_a}{p+1},\qquad
m=\frac{n+\beta-d_b}{q+1},\qquad \ell+m=N.
\]
The coefficients may have jumps, so differentiate in the sense of measures. The product rule for locally BV coefficients and the flux equations give
\begin{equation}\label{eq:systemmeasurepohozaev}
dP=\frac{r^{n+\alpha-d_a}v^{p+1}}{p+1}\,dF_a
+\frac{r^{n+\beta-d_b}u^{q+1}}{q+1}\,dF_b\ge0.
\end{equation}
The origin bounds and the uniform decay estimates give $P(0)=P(\infty)=0$, using bounded coefficient representatives. Hence $P\equiv0$. Both weights in \eqref{eq:systemmeasurepohozaev} are strictly positive on $(0,\infty)$, so $dF_a=dF_b=0$. Thus $a_0=c_ar^{-d_a}$ and $b_0=c_br^{-d_b}$ almost everywhere. At least one of $d_a,d_b$ is positive, contradicting the uniform coefficient bounds. This proves \eqref{eq:systemcontrastgap}.

For the upper estimate, choose the family of Lemma~\ref{lem:systemconstruction}. With fixed constants $C_1,C_2>0$ it satisfies
\[
\mathcal D(a_{0,T},b_{0,T})\le\gamma_0+C_1/T,
\qquad \mathcal R(a_{0,T},b_{0,T})\le e^{C_2T}.
\]
For $M$ sufficiently large, set $T=(\log M)/C_2$. This gives \eqref{eq:systemcontrastrate}. Finally, if an approximating sequence had coefficient ratios that did not tend to infinity, a subsequence would have ratios bounded by one finite $M$, contradicting \eqref{eq:systemcontrastgap}.
\end{proof}

The Newton construction also provides an upper approximation to the combined defect when an admissible kernel class contains $\Phi$. It does not extend the endpoint obstruction or the fixed-ratio compactness argument to a varying kernel. For fractional kernels, a sharp counterpart of \eqref{eq:systeminfimum} would require a nonlocal construction and a corresponding endpoint analysis. The present results keep these questions separate from the Newton theory.

\noindent{\bf Acknowledgements}

\noindent{\bf Data availability}

No datasets were generated or analysed during the present study.

\noindent{\bf Conflict of interest}

On behalf of all authors, the corresponding author states that there is no conflict of interest.

\renewcommand{\bibliofont}{\fontsize{9}{10.5}\selectfont}

\end{document}